 \keywords{ primes, twin primes, gaps, prime constellations, Eratosthenes sieve,
primorial numbers, Polignac's conjecture}
\subjclass{11N05, 11A41, 11A07}
\newtheorem{theorem}{Theorem}[section]
\newtheorem{lemma}[theorem]{Lemma}
\newtheorem{corollary}[theorem]{Corollary}
\newtheorem{remark}[theorem]{Remark}
\newdimen\epsfxsize
\newdimen\epsfysize
\newcommand {\smallgap}     {\makebox[0.025 in]{}}   
\newcommand {\gap}     {\makebox[0.075 in]{}}   
\newcommand {\biggap}     {\makebox[0.15 in]{}}   
\newcommand {\st}      {\gap : \gap}
\newcommand {\fto}     {\longrightarrow}
\newcommand {\set}[1]  {\left\{ {#1} \right\}}   
\newcommand {\ord}[1]  {{#1}^{\rm th}}
\newcommand{\pml}[1] {{#1}^\#}
\newcommand{\Z}     {{\mathbb Z}}
\newcommand{\Zmod}  {\Z \bmod \pml{p_k}}
\newcommand{\Zmodp}  {\Z \bmod \pml{p_{k+1}}}
\newcommand{\Rat}[2]  {w_{{#2},{#1}}}   
\newcommand{\pgap}   {{\mathcal G}}
\newcommand{\MJP}[2] {\left. M_{{#1}} \right|_{{#2}}}
\newcommand{\AJP}[2] {\left. \Lambda_{{#1}} \right|_{{#2}}}
\newcommand{\wgp}[2] {\left. \bar{w}_{{#1}} \right|_{{#2}}}
\newcommand{\Bi}[2]{\left( \begin{array}{c}{#1} \\ {#2} \end{array}\right)}
\newcommand{\lil}{\scriptstyle}
\begin{document}

\title{Combinatorics of the gaps between primes}


\date{2 Oct 2015}

\author{Fred B. Holt with Helgi Rudd}
\address{fbholt62@gmail.com ; 5520 - 31st Ave NE \#500, Seattle, WA 98105 -
presented at Connections in Discrete Mathematics -- June 2015, https://sites.google.com/site/connectionsindiscretemath/ }

\begin{abstract}
A few years ago we identified a recursion that works directly with the gaps among
the generators in each stage of Eratosthenes sieve.  This recursion provides explicit
enumerations of sequences of gaps among the generators, which sequences are known as 
constellations.

The populations of gaps and constellations across stages of Eratosthenes sieve
are modeled exactly by discrete dynamic systems.  These models and their asymptotic 
behaviors provide evidence on a number of
open problems regarding gaps between prime numbers.  

For Eratosthenes sieve we show that the analogue of Polignac's conjecture is true:
every gap $g=2k$ does occur in the sieve, and its asymptotic population supports 
the estimates made in Hardy and Littlewood's Conjecture B.

A stronger form of Polignac's conjecture also holds for the sieve:  for any gap $g=2k$,
every feasible constellation $g,g,\ldots,g$ occurs; these constellations correspond to
consecutive primes in arithmetic progression.

The models also provide evidence toward resolving a series of 
questions posed by Erd{\" o}s and Tur{\'a}n.  

\end{abstract}

\maketitle

\section{Introduction}
 
We work with the prime numbers in ascending order, denoting the
$\ord{k}$ prime by $p_k$.  Accompanying the sequence of primes
is the sequence of gaps between consecutive primes.
We denote the gap between $p_k$ and $p_{k+1}$ by
$g_k=p_{k+1}-p_k.$
These sequences begin
$$
\begin{array}{rrrrrrc}
p_1=2, & p_2=3, & p_3=5, & p_4=7, & p_5=11, & p_6=13, & \ldots\\
g_1=1, & g_2=2, & g_3=2, & g_4=4, & g_5=2, & g_6=4, & \ldots
\end{array}
$$

A number $d$ is the {\em difference} between prime numbers if there are
two prime numbers, $p$ and $q$, such that $q-p=d$.  There are already
many interesting results and open questions about differences between
prime numbers; a seminal and inspirational work about differences
between primes is Hardy and Littlewood's 1923 paper \cite{HL}.

A number $g$ is a {\em gap} between prime numbers if it is the difference
between consecutive primes; that is, $p=p_i$ and $q=p_{i+1}$ and
$q-p=g$.
Differences of length $2$ or $4$ are also gaps; so open questions
like the Twin Prime Conjecture, that there are an infinite number
of gaps $g_k=2$, can be formulated as questions about differences
as well.

A {\em constellation among primes} \cite{Riesel} is a sequence of consecutive gaps
between prime numbers.  Let $s=g_1 g_2 \cdots g_k$ be a sequence of $k$
numbers.  Then $s$ is a constellation among primes if there exists a sequence of
$k+1$ consecutive prime numbers $p_i p_{i+1} \cdots p_{i+k}$ such
that for each $j=1,\ldots,k$, we have the gap $p_{i+j}-p_{i+j-1}=g_j$. 

We do not study the gaps between primes directly.  Instead, we study the cycle of gaps
$\pgap(\pml{p})$ at each stage of Eratosthenes sieve.  Here, $\pml{p}$ is the
{\em primorial} of $p$, which is the product of all primes from $2$ up to and including $p$.
$\pgap(\pml{p})$ is the cycle of gaps among the generators of $\Z \bmod \pml{p}$.
These generators and their images through the counting numbers are the candidate primes
after Eratosthenes sieve has run through the stages from $2$ to $p$.  All of the remaining primes
are among these candidates.

There is a substantial amount of structure preserved in the cycle of gaps from
one stage of Eratosthenes sieve to the next, from $\pgap(\pml{p_k})$ to 
$\pgap(\pml{p_{k+1}})$.  This structure is sufficient to enable us to give exact counts for gaps and for 
sufficiently short constellations in $\pgap(\pml{p})$ across all stages of the sieve.


\subsection{Some conjectures and open problems regarding gaps between primes.}
Open problems regarding gaps and constellations between prime numbers include
the following.
\begin{itemize}
\item {\em Twin Prime Conjecture} - There are infinitely many pairs of consecutive primes with
gap $g=2$.
\item {\em Polignac's Conjecture} - For every even number $2n$, there are infinitely many pairs of 
consecutive primes with gap $g=2n$.
\item {\em HL Conjecture B} - From page 42 of Hardy and Littlewood \cite{HL}:  for any even $k$, the number of prime pairs $q$ and $q+k$ such that $q+k < n$ is approximately
$$ 2 C_2 \frac{n}{(\log n)^2} \prod_{p \neq 2, \; p | k} \frac{p-1}{p-2}.$$
\item {\em CPAP conjecture} - For every $k>2$, there exist infinitely many sets of $k$ consecutive
primes in arithmetic progression.
\end{itemize}

These problems and others regarding the gaps and differences among primes are
usually approached \cite{GranICM, GPY} through sophisticated probabilistic models, rooted in the prime number theorem.

Here we study the combinatorics of the cycle of gaps $\pgap(q \cdot \pml{p})$ as they relate to $\pgap(\pml{p})$.  
To obtain our results below, we look at the Chinese Remainder Theorem in the context of cycles of gaps,
supplemented by the more explicit arithmetics from the recursion between cycles.
Seminal works for our studies include \cite{HL, HW, Cramer}.  
Several estimates on gaps derived from these models
have been corroborated computationally.  These computations have
addressed the occurrence of twin primes \cite{Brent3, NicelyTwins, PSZ, IJ, JLB},
and some have corroborated the estimates in Conjecture B for other gaps
\cite{Brent, GranRaces}.

Work on specific constellations among primes includes the study of prime quadruplets
\cite{HL, quads},
which corresponds to the constellation $2,4,2$.  This is two pairs of twin primes separated
by a gap of $4$, the densest possible occurrence of primes in the large. 
The estimates for prime quadruplets have also been supported computationally
\cite{NicelyQuads}. 

We do not resolve any of the open problems as stated above for gaps between primes.
However, we are able to resolve their analogues for gaps in the stages of Eratosthenes sieve.
Through our work below we prove that analogues for all of the above conjectures hold true
for Eratosthenes sieve.

These results are deterministic, not probabilistic.  We develop a population model below
that describes the growth of the populations of various gaps in the cycle of gaps, across
the stages of Eratosthenes sieve.

\renewcommand\arraystretch{2}
\begin{table}
\begin{center}
\begin{tabular}{|l|c|p{2.6in}|} \hline
 & Primes & \multicolumn{1}{c|}{Eratosthenes sieve} \\ \hline
 Twin primes conjecture & {\em open} & $n_{2,1}(\pml{p}) = \prod_{2<q \le p} (q-2)$ \\ \hline
 Polignac's conjecture &  {\em open} & Every gap $g=2k$ occurs infinitely often. \\
 HL Conjecture B & {\em open} & $\frac{n_{2k,1}(\pml{p})}{n_{2,1}(\pml{p})} \fto \prod_{q>2, \; q | k} \frac{p-1}{p-2}$ \\ \hline
 CPAP conjecture & {\em open} & Every feasible constellation $gg\ldots g$ occurs, and its asymptotic
 relative population depends only on the prime factors of $g$. \\  \hline
\end{tabular}
\caption{\label{ConjTable} For Eratosthenes sieve, we can establish the analogues for
several open conjectures about gaps between primes. $n_{g,1}(\pml{p})$ denotes the population of
the gap $g$ among the generators of $\Z \bmod \pml{p}$.}
\end{center}
\end{table}
\renewcommand\arraystretch{1}

All gaps between prime numbers arise in a cycle of gaps.  To connect our results 
to the desired results on gaps between primes, we would need to better understand how
gaps survive later stages of the sieve, to be affirmed as gaps between
primes.

\section{The cycle of gaps}
After the first two stages of Eratosthenes sieve, we have removed the multiples of $2$ and $3$.
The candidate primes at this stage of the sieve are 
$$(1),5,7,11,13,17,19,23,25,29,31,35,37,41,43,\ldots$$
We investigate the structure of these sequences of candidate primes
by studying the cycle of gaps in the fundamental cycle.

For example, for the candidate primes listed above, the first gap from $1$ to $5$ is $g=4$,
the second gap from $5$ to $7$ is $g=2$, then $g=4$ from $7$ to $11$, and so on.
The {\em cycle of gaps} $\pgap(\pml{3})$ is $42$.  To reduce visual clutter, we write
the cycles of gaps as a concatenation of single digit gaps, reserving the use of commas to 
delineate gaps of two or more digits.

$$\pgap(\pml{3}) = 42, \gap {\rm with} \gap g_{1}=4 \gap {\rm and} \gap 
g_{2}=2.$$

Advancing Eratosthenes sieve one more stage, we identify $5$ as the next prime 
and remove the multiples of $5$ from the list of candidates, leaving us with
$$(1),7,11,13,17,19,23,29,31,37,41,43,47,49,53,59,61, \ldots$$
As illustrated in Figure~\ref{G5Fig}, we calculate the cycle of gaps at this stage to be
$\pgap(\pml{5}) = 64242462.$

\begin{figure}[t]
\centering
\includegraphics[width=5in]{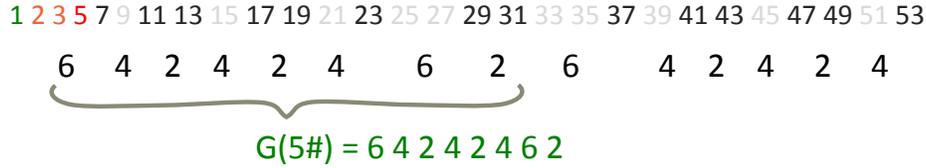}
\caption{\label{G5Fig} The cycle of gaps $\pgap(\pml{5})$ for Eratosthenes sieve
after multiples of $2$, $3$, and $5$ have been removed.  The first gap of $6$ goes from $1$ to the
next prime $7$. }
\end{figure}

\subsection{Recursion on the cycle of gaps}
There is a nice recursion which produces $\pgap(p_{k+1})$ directly from
$\pgap(p_k)$.  We concatenate $p_{k+1}$ copies of $\pgap(p_k)$, and
add together certain gaps as indicated by the entry-wise product
$p_{k+1}*\pgap(p_k)$.

\begin{lemma} \label{RecursLemma}
The cycle of gaps $\pgap(\pml{p_{k+1}})$ is derived recursively from 
$\pgap(\pml{p_k})$.
Each stage in the recursion consists of the following three steps:
\begin{itemize}
\item[R1.] Determine the next prime, $p_{k+1} = g_{1} + 1$.
\item[R2.] Concatenate $p_{k+1}$ copies of $\pgap(\pml{p_k})$.
\item[R3.] Add adjacent gaps as indicated by the elementwise product 
$p_{k+1}*\pgap(\pml{p_k})$:  let $i_1=1$ and add together $g_{i_1}+g_{i_1+1}$; then for 
$n=1,\ldots,\phi(N)$, add $g_{j}+g_{j+1}$ and let 
$i_{n+1}=j$ if the running sum of the concatenated gaps from $g_{i_n}$ to $g_j$ is
$p_{k+1}*g_{n}.$
\end{itemize}
\end{lemma}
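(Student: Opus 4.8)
The plan is to prove Lemma~\ref{RecursLemma} by unwinding the Chinese Remainder Theorem applied to the modulus $\pml{p_{k+1}} = p_{k+1}\cdot\pml{p_k}$, and then translating the resulting statement about residues into a statement about the cycle of gaps. First I would set up the bijection underlying step R2: the generators of $\Z \bmod \pml{p_{k+1}}$ are exactly those residues $r$ with $0 \le r < \pml{p_{k+1}}$ that are coprime to $p_{k+1}\pml{p_k}$. Writing $r = m\cdot\pml{p_k} + s$ with $0 \le m < p_{k+1}$ and $0 \le s < \pml{p_k}$, coprimality to $\pml{p_k}$ depends only on $s$, so the candidates coprime to $\pml{p_k}$ in $[0,\pml{p_{k+1}})$ are precisely the $p_{k+1}$ translated copies of the generators of $\Z \bmod \pml{p_k}$ — this is exactly the concatenation of $p_{k+1}$ copies of $\pgap(\pml{p_k})$, and it accounts for R2. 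I would note that R1 is essentially immediate: the first generator of $\Z \bmod \pml{p_k}$ after $1$ is the smallest prime exceeding $p_k$, namely $p_{k+1}$, so $p_{k+1} = g_1 + 1$ where $g_1$ is the leading gap of $\pgap(\pml{p_k})$.

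Next I would handle step R3, which is the substance of the lemma. Among the $\phi(\pml{p_{k+1}}) \cdot \tfrac{p_{k+1}}{p_{k+1}-1}$-many candidates just described, the actual generators of $\Z \bmod \pml{p_{k+1}}$ are those also coprime to $p_{k+1}$, i.e. we must delete exactly those candidates $r \equiv 0 \pmod{p_{k+1}}$. By CRT, in each "block" (each copy of $\pgap(\pml{p_k})$ indexed by $m = 0, 1, \ldots, p_{k+1}-1$) there is exactly one deleted candidate, and as $m$ runs over a complete residue system mod $p_{k+1}$ so does the index of the generator within the block being deleted: concretely, if the generators of $\Z \bmod \pml{p_k}$ are $1 = h_0 < h_1 < \cdots$, then the $n$-th candidate to be removed across all the blocks is the one congruent to $h_{n-1}\cdot(\text{something})$ — the key arithmetic point is that the positions removed are $h_{n}$ with $n$ running through $0, 1, \ldots, \phi(\pml{p_k})-1$ but offset by block, and the block offsets are governed precisely by the elementwise product $p_{k+1} * \pgap(\pml{p_k})$. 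When a candidate is deleted, the two gaps adjacent to it in the concatenated cycle merge into a single gap equal to their sum; so R3's bookkeeping — starting at $i_1 = 1$, and advancing the "cut index" $i_{n+1} = j$ whenever the running sum of concatenated gaps reaches $p_{k+1}\cdot g_n$ — is exactly the statement that the $n$-th deleted position sits at cumulative distance $p_{k+1}\cdot g_n$ from the previous one. I would verify this by computing, modulo $p_{k+1}$, which generator $h_t$ of the $m$-th block satisfies $m\pml{p_k} + h_t \equiv 0$, showing that the solution $t$ increments in lockstep with the partial sums $g_1, g_1+g_2, \ldots$ scaled by $p_{k+1}$.

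I expect the main obstacle to be making the index-tracking in R3 fully rigorous: one must show that as the block index $m$ increases by $1$, the position of the removed candidate inside the block advances by an amount that, accumulated over the whole cycle, reproduces exactly the partial sums of $p_{k+1}\cdot\pgap(\pml{p_k})$, and in particular that this process closes up correctly after $\phi(\pml{p_{k+1}})$ merges — i.e. that the number of merges equals $\phi(\pml{p_k})$ (since $\phi(\pml{p_{k+1}}) = (p_{k+1}-1)\phi(\pml{p_k})$ and we start with $p_{k+1}\phi(\pml{p_k})$ gaps, deleting $\phi(\pml{p_k})$ of them). The clean way to do this is to track cumulative sums: the cumulative sum of the first $j$ concatenated gaps equals $h_j$ (the $j$-th candidate, reading $h_j = q\pml{p_k} + h_{j \bmod \phi(\pml{p_k})}$), and a candidate at cumulative position $h_j$ is divisible by $p_{k+1}$ iff that cumulative sum is $\equiv 0 \pmod{p_{k+1}}$; since the cumulative sums $h_1 < h_2 < \cdots$ hit each residue class mod $p_{k+1}$ with controlled spacing, the $n$-th hit occurs exactly when the running sum equals $p_{k+1}\cdot g_n$ in the elementwise-product sense, which is what R3 asserts. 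A careful statement and proof of this last divisibility/spacing fact, plus checking the wrap-around (the cycle is cyclic, so the final merge wraps the tail gap into the head), is where the real work lies; everything else is CRT bookkeeping.
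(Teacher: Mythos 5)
Your overall strategy matches the paper's: set up the correspondence between gaps and generators, read R2 off the decomposition $r=m\cdot\pml{p_k}+s$, and read R3 as the deletion of the candidates divisible by $p_{k+1}$. But your CRT statement for R3 is transposed, and this is a genuine error. You claim that ``in each block (each copy of $\pgap(\pml{p_k})$) there is exactly one deleted candidate.'' Count the deletions: there are $p_{k+1}$ blocks, while the total number of deletions is $p_{k+1}\phi(\pml{p_k})-\phi(\pml{p_{k+1}})=\phi(\pml{p_k})$, which is not $p_{k+1}$ --- your own closing parenthetical computes exactly this, so the proposal is internally inconsistent. Concretely, in the paper's worked example $\pgap(\pml{5})\to\pgap(\pml{7})$ the deleted candidates sit at $7,49,77,91,119,133,161,203$; the third copy (candidates $67$ through $91$) receives two deletions, and at the next stage $\phi(\pml{7})=48$ deletions are spread over only $11$ copies. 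The correct ``exactly once'' statement (the paper's Lemma~\ref{DelLemma}) fixes a generator $\gamma_j$ of $\Z\bmod\pml{p_k}$ and observes that among its $p_{k+1}$ translates $\gamma_j+m\,\pml{p_k}$ exactly one is $\equiv 0\bmod p_{k+1}$: one deletion per position within the cycle, not one per copy of the cycle. Your sentence about the deleted index advancing as the block index $m$ increases inherits this confusion and does not describe the actual deletion pattern.

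Separately, you defer as ``where the real work lies'' a step that has a one-line resolution, and that resolution is how the paper actually obtains the spacing rule in R3. Since $\gcd(p_{k+1},\pml{p_k})=1$, a candidate $c$ coprime to $\pml{p_k}$ is divisible by $p_{k+1}$ if and only if $c=p_{k+1}\gamma$ with $\gamma$ itself coprime to $\pml{p_k}$; hence the deleted candidates, listed in increasing order, are exactly $p_{k+1}\cdot 1,\ p_{k+1}\gamma_1,\ p_{k+1}\gamma_2,\ldots$, i.e.\ $p_{k+1}$ times the generators of $\Z\bmod\pml{p_k}$ in order. Their consecutive differences are therefore $p_{k+1}g_1, p_{k+1}g_2,\ldots$, which is precisely the elementwise-product prescription of R3, including the wrap-around (the final difference $p_{k+1}\cdot 2$ runs from $p_{k+1}(\pml{p_k}-1)$ to $p_{k+1}(\pml{p_k}+1)$). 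With that observation in place, your cumulative-sum bookkeeping closes up correctly and the count of $\phi(\pml{p_k})$ merges follows; without it, and with the per-block claim as stated, the argument does not go through.
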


\begin{proof}  
Let $\pgap(\pml{p_k})$ be the cycle of gaps for the stage of Eratosthenes sieve
after the multiples of the primes up through $p_k$ have been removed.  
Note that $\pgap(\pml{p})$ consists of $\phi(\pml{p})$ gaps that sum to $\pml{p}$.
We show that the recursion
R1-R2-R3 on $\pgap(\pml{p_k})$ produces the cycle of gaps for the next stage, corresponding
to the removal of multiples of $p_{k+1}$.

There is a natural one-to-one correspondence between the gaps in the cycle
of gaps $\pgap(\pml{p_k})$ and the generators of $\Z \bmod \pml{p_k}$.
For $j=1,\ldots, \phi(\pml{p_k})$ let
\begin{equation}\label{EqGen}
\gamma_{j} = 1 + \sum_{i=1}^j g_{i}.
\end{equation}
These $\gamma_{j}$ are the generators in $\Z \bmod \pml{p_k}$, with 
$\gamma_{\phi(\pml{p_k})} \equiv 1 \bmod \pml{p_k}$.
 
 The $\ord{j}$ candidate prime at this stage of the sieve is given by $\gamma_{j}$.

The next prime $p_{k+1}$ will be $\gamma_{1}$, since this will be the smallest
integer both greater than $1$ and coprime to $\pml{p_k}$.

The second step of the recursion extends our list of possible primes
up to $\pml{p_{k+1}}+1$, the reach of the fundamental cycle for $\pml{p_{k+1}}$.
For the gaps $g_{j}$ we extend the indexing on $j$ to cover these 
concatenated copies.  These $p_{k+1}$ concatenated copies of $\pgap(\pml{p_k})$
correspond to all the numbers from $1$ to $\pml{p_{k+1}}+1$ which are coprime
to $\pml{p_k}$.  For the set of generators of $\pml{p_{k+1}}$, we need only remove
the multiples of $p_{k+1}$. 

The third step removes the multiples of $p_{k+1}$.
Removing a possible prime amounts to
adding together the gaps on either side of this entry.
The only multiples of $p_{k+1}$ which remain in the copies of $\pgap(\pml{p_k})$
are those multiples all of whose prime factors are greater than $p_k$.
After $p_{k+1}$ itself, the next multiple to be removed will be $p_{k+1}^2$.

The multiples we seek to remove are given by $p_{k+1}$ times the generators
of $\Z \bmod \pml{p_k}$.  The consecutive differences between these will be given
by $p_{k+1} * g_{j}$, and the sequence $p_{k+1}*\pgap(\pml{p_k})$ suffices to cover
the concatenated copies of $\pgap(\pml{p_k})$.  We need not consider any fewer nor any
more multiples of $p_{k+1}$ to obtain the generators for $\pgap(\pml{p_{k+1}})$.

In the statement of R3, the index $n$ moves through the copy of
$\pgap(\pml{p_k})$ being multiplied by $p_{k+1}$, and the indices $\tilde{i}_n$
mark the index $j$ at which the addition of gaps occurs.
The multiples of $p_{k+1}$ in the sieve up through $\pml{p_{k+1}}$ are given by
$p_{k+1}$ itself and $p_{k+1}*\gamma_{j}$ for $j=1,\ldots,\phi(\pml{p_k})$.
The difference between successive multiples is $p_{k+1}*g_{j}$.
\end{proof}

%
\begin{figure}[t]
\begin{center}
\includegraphics[width=5in]{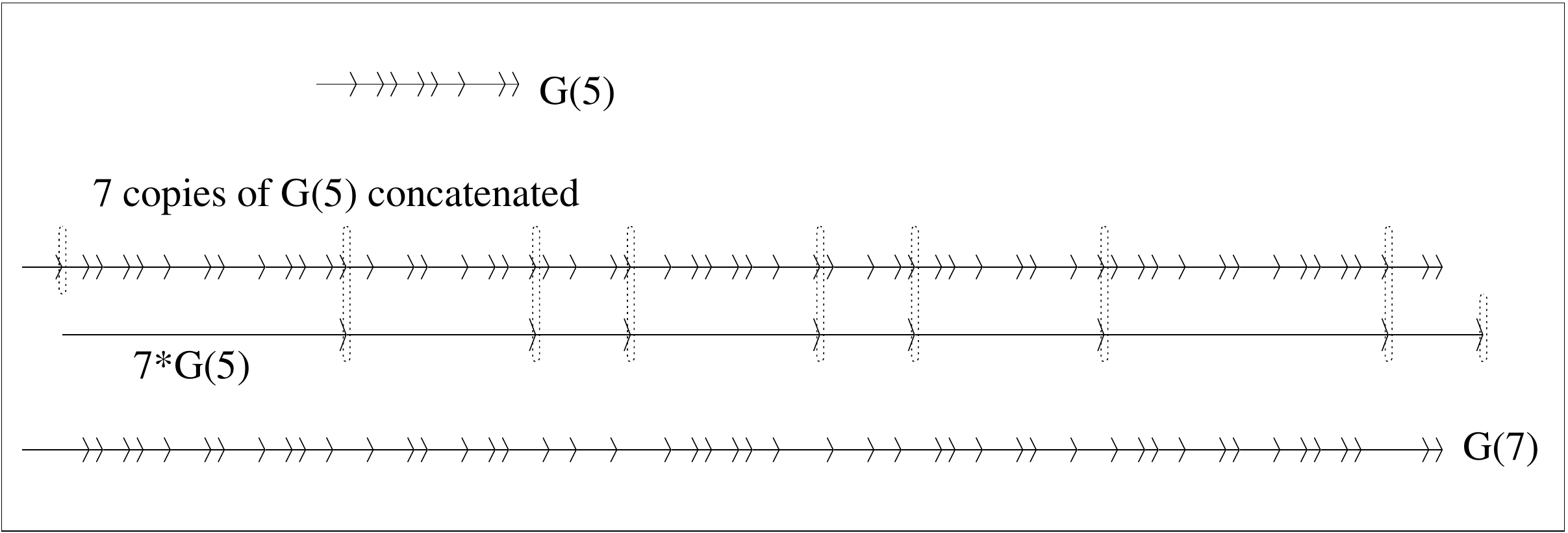}
\caption{\label{RecursFig} Illustrating the recursion that
produces the gaps for the next stage of Eratosthenes sieve.
The cycle of gaps $\pgap(\pml{7})$ is produced from $\pgap(\pml{5})$ by
concatenating $7$ copies, then adding the gaps indicated by
the element-wise product $7*\pgap(\pml{5})$.}
\label{default}
\end{center}
\end{figure}

We call the additions in step R3 the {\em closure} of the two adjacent gaps.  

The first closure in step R3 corresponds to noting the next prime number $p_{k+1}$.
The remaining closures in step R3 correspond to removing from the candidate 
primes the composite numbers whose smallest prime factor is $p_{k+1}$.
From step R2, the candidate primes have the form
 $\gamma + j\cdot \pml{p_k}$, for a generator $\gamma$ of $\Z \bmod \pml{p_k}$.

\noindent{\bf Example: $\pgap(\pml{7})$.}
As an example of the recursion,
we construct $\pgap(\pml{7})$ from $\pgap(\pml{5})=64242462$.
Figure~\ref{RecursFig} provides an illustration of this construction.

\begin{itemize}
\item[R1.] Identify the next prime, $p_{k+1}= g_1+1 = 7.$
\item[R2.] Concatenate seven copies of $\pgap(\pml{5})$:
$$\scriptstyle 64242462 \; 64242462 \; 64242462 \; 64242462\; 64242462 \; 64242462 \;64242462$$
\item[R3.] Add together the gaps after the leading $6$ and 
thereafter after differences of 
$ 7*\pgap(\pml{5}) = 42, 28, 14, 28, 14, 28, 42, 14 $:
$$\begin{array}{l} 
\pgap(\pml{7}) \; = \\
{\scriptstyle
  6+\overbrace{\scriptstyle 424246264242}^{42}+
 \overbrace{\scriptstyle 4626424}^{28}+\overbrace{\scriptstyle 2462}^{14}+
 \overbrace{\scriptstyle 6424246}^{28}+\overbrace{\scriptstyle 2642}^{14}+
 \overbrace{\scriptstyle 4246264}^{28}+\overbrace{\scriptstyle 242462642424}^{42}+
 \overbrace{\scriptstyle 62 \; }^{14}} \\
=  \; 
 {\it 10}, 2424626424 {\it 6} 62642 {\it 6} 46 {\it 8} 42424 {\it 8} 64 {\it 6} 24626 {\it 6} 
  4246264242, {\it 10}, 2
\end{array}
$$
The final difference of $14$ wraps around the end of the cycle,
 from the addition preceding the final $6$ to the 
addition after the first $6$.
\end{itemize}

\subsection{Every possible closure of adjacent gaps occurs exactly once}

When we apply the Chinese Remainder Theorem to this recursion on the cycle of gaps,
we derive very powerful combinatorial results.  The following Lemma~\ref{DelLemma}
is the foundation for developing the
discrete model for the populations of gaps and constellations across stages of Eratosthenes
sieve.  This is the reflection of the Chinese Remainder Theorem into this approach through the cycles
of gaps.

\begin{lemma}\label{DelLemma}
Each possible closure of adjacent gaps in the cycle $\pgap(\pml{p_k})$
occurs exactly once in the recursive construction of $\pgap(\pml{p_{k+1}}).$
\end{lemma}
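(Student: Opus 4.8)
The plan is to read Lemma~\ref{DelLemma} as the Chinese Remainder Theorem in disguise, and I would set up the two sides of the claimed bijection first. On one side, a \emph{possible closure of adjacent gaps in $\pgap(\pml{p_k})$} is the merging of two cyclically consecutive gaps $g_j$ and $g_{j+1}$; by \eqref{EqGen} the junction between them sits at the generator $\gamma_j$, so the possible closures correspond bijectively to the residue classes mod $\pml{p_k}$ coprime to $\pml{p_k}$ (the class of $\gamma_{\phi(\pml{p_k})}\equiv 1$ labeling the junction that wraps the last gap around to the first), and there are $\phi(\pml{p_k})$ of them. On the other side, by the proof of Lemma~\ref{RecursLemma}, step R3 executes exactly one closure per number removed at this stage of the sieve, namely for $p_{k+1}$ and for each multiple $p_{k+1}\,t$ with $\gcd(t,\pml{p_k})=1$ and $1 < p_{k+1}t < \pml{p_{k+1}}$; equivalently, $t$ ranges over the $\phi(\pml{p_k})$ candidate primes $\{1,\gamma_1,\dots,\gamma_{\phi(\pml{p_k})-1}\}$ that lie in the first fundamental cycle. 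A gap count confirms the number of executed closures: R2 produces $p_{k+1}\,\phi(\pml{p_k})$ gaps while $\pgap(\pml{p_{k+1}})$ has $\phi(\pml{p_{k+1}})=(p_{k+1}-1)\,\phi(\pml{p_k})$ gaps, a difference of exactly $\phi(\pml{p_k})$.

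Next I would pin down which possible closure each executed closure realizes. The removed number $p_{k+1}t$ falls inside the copy of $\pgap(\pml{p_k})$ indexed by $\flr{p_{k+1}t/\pml{p_k}}$, at the internal position $p_{k+1}t \bmod \pml{p_k}$; since $p_{k+1}$ and $t$ are both coprime to $\pml{p_k}$, this residue is a unit, say $\gamma_j$, and deleting the candidate that sits at $\gamma_j$ inside that copy merges precisely the gaps $g_j$ and $g_{j+1}$ of $\pgap(\pml{p_k})$. Hence the closure attached to $p_{k+1}t$ realizes the possible closure labeled by the residue $p_{k+1}t \bmod \pml{p_k}$, independently of which of the $p_{k+1}$ copies it falls in.

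Finally I would invoke the arithmetic. Since $\gcd(p_{k+1},\pml{p_k})=1$, multiplication by $p_{k+1}$ is a bijection of the group of units modulo $\pml{p_k}$. As $\{1,\gamma_1,\dots,\gamma_{\phi(\pml{p_k})-1}\}$ is a complete set of representatives of those units, so is the set of residues $p_{k+1}t \bmod \pml{p_k}$ obtained as $t$ ranges over it. Therefore the $\phi(\pml{p_k})$ closures executed in R3 realize each of the $\phi(\pml{p_k})$ possible closures of $\pgap(\pml{p_k})$ exactly once, which is the assertion.

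I expect the main obstacle to be the indexing in the middle step: making fully rigorous that a deletion at internal residue $\gamma_j$ of an \emph{arbitrary} concatenated copy merges exactly $g_j$ with $g_{j+1}$, with the correct convention at the seam $\gamma_j \equiv 1$ between consecutive copies, and verifying that the running-sum prescription in R3 selects exactly the positions $p_{k+1}t$ and no others. Once that correspondence is secured, the conclusion is just that multiplication by a unit permutes the units --- the Chinese Remainder Theorem reflected into the cycle of gaps.
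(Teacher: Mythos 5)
Your proof is correct and is essentially the paper's argument: both are the Chinese Remainder Theorem reflected into the cycle of gaps via $\gcd(p_{k+1},\pml{p_k})=1$. The only difference is the direction of the bijection — the paper fixes a junction $\gamma_j$ and observes that exactly one of its $p_{k+1}$ lifts $\gamma_j+i\cdot\pml{p_k}$ is divisible by $p_{k+1}$, while you enumerate the removed multiples $p_{k+1}t$ and show that $t\mapsto p_{k+1}t\bmod \pml{p_k}$ permutes the units; these are the two sides of the same correspondence.
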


A version of the {\em Chinese Remainder Theorem} sufficient for our needs is that 
the system
\begin{eqnarray*}
  \gamma = \gamma_1 \bmod N_1 \\ 
  \gamma = \gamma_2 \bmod N_2 
\end{eqnarray*}
with $\gcd(N_1,N_2)=1$ has a unique solution modulo $N_1 N_2$.

\begin{proof}
Each entry in $\pgap(\pml{p_k})$ corresponds to one of the generators
of $\Zmod$. The first gap $g_1$ corresponds to $p_{k+1}$, and
thereafter $g_j$ corresponds to $\gamma_{j} = 1+\sum_{i=1}^j g_i$.
These correspond in turn to unique combinations of nonzero
residues modulo the primes $2,3,\ldots,p_k$.  

In step R2, we concatenate $p_{k+1}$
copies of $\pgap(\pml{p_k})$.  For each gap $g_j$ in $\pgap(\pml{p_k})$
there are $p_{k+1}$ copies of this gap after step R2, corresponding to 
$$\gamma_{j} + i \cdot \pml{p_k} \biggap {\rm for} \gap i = 0, \ldots, p_{k+1}-1.$$
For each copy, 
the combination of residues for $\gamma_{j}$ modulo $2,3,\ldots,p_k$
is augmented by a unique residue modulo
$p_{k+1}$.  Exactly one of these has residue $0 \bmod p_{k+1}$, so we
perform $g_{j}+g_{j+1}$ for this copy and only this copy of $g_j$.
\end{proof}


\begin{remark}\label{EasyRmk}
The following results are easily established for $\pgap(\pml{p_k})$:

\begin{enumerate}
\item The cycle of gaps $\pgap(\pml{p_k})$ consists of $\phi(\pml{p_k})$ gaps
that sum to $\pml{p_k}$.
\item The first difference between closures is $p_{k+1}*(p_{k+1}-1)$,
which removes $p_{k+1}^2$ from the list of candidate primes.
\item The last entry in $\pgap(\pml{p_k})$ is always $2$. 
This difference goes from $-1$ to $+1$ in $\Zmod$.
\item The last difference $p_{k+1}*2$ between closures in step R3, wraps from
$-p_{k+1}$ to $p_{k+1}$ in $\Zmodp$.
\item Except for the final $2$, the cycle of differences is symmetric:
$$g_{j}=g_{\phi(\pml{p_k})-j}.$$
\item The constellation $gg\ldots g$ of length $j$ corresponds to $j+1$ consecutive primes in 
arithmetic progression (CPAP), and this constellation is feasible iff
$g = 0 \bmod p$ for all primes $p \leq j+1$.  
\item In $\pgap(\pml{p_{k+1}})$ there are at least two gaps of size $g=2p_k$.
\item The middle of the cycle $\pgap(\pml{p_k})$ is the sequence 
$$2^j,2^{j-1},\ldots,42424,\ldots,2^{j-1},2^j$$
in which $j$ is the smallest number such that $2^{j+1}>p_{k+1}$.
\end{enumerate}
\end{remark}

There is an interesting fractal character to the recursion.  To produce the next cycle of gaps
$\pgap(\pml{p_{k+1}})$ from the current one, $\pgap(\pml{p_k})$, we concatenate
$p_{k+1}$ copies of the current cycle, take an expanded copy of the current cycle, and close gaps
as indicated by that expanded copy.  In the discrete dynamic system that we develop below, we 
don't believe that all of the power in this self-similarity has yet been captured.

\section{Enumerating gaps and driving terms}\label{GapExSection}

By analyzing the application of Lemma~\ref{DelLemma} to the recursion, we can derive
exact counts of the occurrences of specific gaps and specific constellations across all
stages of the sieve.  

We start by exploring a few motivating examples, after which we 
describe the general process as a discrete dynamic system -- a population model with initial
conditions and driving terms.  Fortunately, although
the transfer matrix $M(p_k)$ for this dynamic system depends on the prime $p_k$, its eigenstructure
is beautifully simple, enabling us to provide correspondingly simple descriptions of the
asymptotic behavior of the populations.  In this setting, the populations are the numbers
of occurrences of specific gaps or constellations across stages of Eratosthenes sieve.

\subsection{Motivating examples}
We start with the cycle of gaps 
$$\pgap(\pml{5})=64242462$$
and study the persistence of its gaps and constellations through later stages of the sieve.

For a constellation $s$ of length $j$, we denote the number of occurrences of $s$ in the cycle of
gaps $\pgap(\pml{p})$ with the notation $n_{s,j}(\pml{p})$. 

A {\em driving term} of length $j+i$ for a constellation $s$ is 
a constellation of length $j+i$ which upon $i$ specific closures produces the constellation $s$.  For example,
the constellations $4 \; 2$ and $2 \; 4$ are driving terms of length $2$ for the gap $6$; and
the constellation $4 \gap 2 \gap 4$ is a driving term of length $3$ for the constellation $6 \gap 4$ and
for the gap $8$.
We denote the number of driving terms for $s$ of length $j+i$ as $n_{s,j+i}(\pml{p})$. 

{\bf Enumerating the gaps $g=2$ and $g=4$.}  For the gaps $g=2$ and $g=4$, we start with
$n_{2,1}(\pml{5})= n_{4,1}(\pml{5}) = 3$.  There are no constellations of length $2$ or more in 
$\pgap(\pml{5})$ that would produce gaps $2$ or $4$, so there are no driving terms other than the
gaps themselves.

In forming $\pgap(\pml{7})$, in step R2 we create $7$ copies
of each of the three $2$'s.  In step R3, in each family of seven copies we close a $2$ on the left and we close 
a $2$ on the right.

Could the two closures occur on the same copy of a $2$?  We observe that in step R3, the
distances between closures is governed by the entries in $7*\pgap(\pml{5})$, so the
minimum distance between closures in forming $\pgap(\pml{7})$ is $7*2 = 14$.
Thus the two closures cannot occur on the same copy of a $2$.  In fact, for any gaps and constellations
of sum less than $14$, the two external closures must occur in separate copies.

So for each $g=2$ in $\pgap(\pml{5})$, in step R2 we create seven copies, and in step R3 we
close two of these seven copies, one from the left and one from the right.  The identical argument applies
to $g=4$.  So the populations of the gaps $g=2$ and $g=4$ are completely described by:
\begin{equation}\label{Eq2System}
\begin{array}{lcl}
n_{2,1}(\pml{p_{k+1}}) = (p_{k+1}-2) \cdot n_{2,1}(\pml{p_k}) & {\rm with} & 
 n_{2,1}(\pml{5}) = 3 \\
n_{4,1}(\pml{p_{k+1}}) = (p_{k+1}-2) \cdot n_{4,1}(\pml{p_k}) & {\rm with} & 
 n_{4,1}(\pml{5}) = 3 
\end{array}
\end{equation}
From this we see immediately that at every stage of Eratosthenes sieve, 
$n_{2,1}(\pml{p}) = n_{4,1}(\pml{p})$, and that the number of gaps $g=2$ in the
cycle of gaps $\pgap(\pml{p})$ grows superexponentially by
factors of $p-2$ as we increase the prime $p$ through the stages of the sieve.

{\bf Enumerating the gaps $g=6$ and its driving terms.}
For the gap $g=6$, we count $n_{6,1}(\pml{5})=2$.  In forming $\pgap(\pml{7})$, 
we will create seven copies of each of these gaps and close two of the copies for each initial gap.
However, we will also gain gaps $g=6$ from the closures of the constellations $s=24$ and $s=42$.

These constellations $s=24$ and $s=42$ are {\em driving terms} for the gap $g=6$.  These driving
terms are of length $2$.  We observe that these constellations do not themselves have driving terms.
For $s=24$, we initially have $n_{24,2}(\pml{5})=2$, and under the recursion that forms
$\pgap(\pml{7})$, we create seven copies of each constellation $s=24$, and we close 
{\em three} of these copies.  The left and right closures remove these copies from the system for
$g=6$, and the middle closure produces a gap $g=6$. 

We can express the system for the population of gaps $g=6$ as:
\begin{eqnarray}\label{Eq6System}
\nonumber \left[ \begin{array}{c} n_{6,1} \\ n_{6,2} \end{array}\right]_{\pml{p_{k+1}}}
& = & \left[ \begin{array}{c} n_{6,1} \\  n_{24,2}+n_{42,2} \end{array}\right]_{\pml{p_{k+1}}} \\
 &= & \left[\begin{array}{cc} p_{k+1}-2 & 1 \\ 0 & p_{k+1}-3 \end{array}\right]
\left[ \begin{array}{c} n_{6,1} \\ n_{6,2} \end{array}\right]_{\pml{p_k}}
\end{eqnarray}
with $n_{6,1}(\pml{5})=2$ and $n_{6,2}(\pml{5})=n_{24,2}(\pml{5})+n_{42,2}(\pml{5})=4$.

By the symmetry of $\pgap(\pml{p})$, we know that 
$n_{24,2}(\pml{p})=n_{42,2}(\pml{p})$ for all $p$, but the above approach of noting the
addition of the populations of these driving terms will help us develop the general form 
for the dynamic system.

How does the population of gaps $g=6$ compare to that for $g=2$?  
\begin{center}
\begin{tabular}{|c|ccc|} \hline
 & \multicolumn{3}{c|}{$n_{2,j}$ and $n_{6,j}$ for small primes} \\
 & $\pgap(\pml{5})$ & $\pgap(\pml{7})$ & $\pgap(\pml{11})$  \\ \hline
 $\begin{array}{r}
 g=2 \\ g=6 \\ \hline s=24 \\ s=42 \end{array}$ &
 $\begin{array}{r}
 3 \\ 2 \\  2 \\ 2 \end{array}$ &
 $\begin{array}{rl}
15 & = 5\cdot 3 \\
14 & = 5\cdot 2 + 4\\
8 & = 4\cdot 2 \\
8 & = 4\cdot 2 \end{array}$ &
 $\begin{array}{rl}
135 & = 9\cdot 15 \\
142 & = 9\cdot 14 + 16\\ 
64 & = 8\cdot 8 \\
64 & = 8\cdot 8 \end{array}$ \\ \hline
  \end{tabular}
\end{center}

In $\pgap(\pml{7})$, 
there are still more gaps $2$ than $6$'s, but the gap $g=6$ now has $16$ driving terms.
These driving terms help make $6$'s more numerous than
$2$'s in $\pgap(\pml{11})$.
Thereafter, both populations $n_{2,1}(\pml{p})$ and $n_{6,1}(\pml{p})$
 are growing by the factor $(p-2)$, and the gap $g=6$ has 
driving terms whose populations grow by the factor $(p-3)$.

{\bf Enumerating the gaps $g=8$ and its driving terms.}
For the gap $g=8$, we have no gaps $g=8$ in $\pgap(\pml{5})$; however, there are driving terms of length two $s=26$ and $s=62$, and in this case there is a driving term of length three $s=242$.  When the left-inner closure of $s=242$ occurs,
the driving term $62$ is produced, and the right-inner closure produces a copy of the driving term $26$.
No other constellations in $\pgap(\pml{5})$ sum to $8$.  So how will the population of
the gap $g=8$ evolve over stages of the sieve?

As we have seen with the gaps $g=2, \smallgap 4, \smallgap 6$, in forming $\pgap(\pml{p_{k+1}})$ 
each gap $g=8$ will initially generate $p_{k+1}$ copies in step R2 of which $p_{k+1}-2$ will survive step R3.  Each instance
of $s=26$ or $s=62$ will generate one additional gap $g=8$ upon the interior closure, two copies will
be lost from the exterior closures, and $p_{k+1}-3$ copies will survive step R3 of the recursion.

The driving term of length three, $s=242$, will add to the populations of the driving terms
of length $2$.
In forming $\pgap(\pml{7})$, we will
create seven copies of $s=242$ in step R2.  In step R3, for the seven copies of $s=242$, 
the two exterior closures increase the sum,
removing the resulting constellation as a driving term for $g=8$; the two interior closures create
driving terms of length two ($s=62$ and $s=26$), and three copies of $s=242$ survive intact.

\begin{center}
\begin{tabular}{|c|ccc|} \hline
 & \multicolumn{3}{c|}{$n_{8,j}$ for small primes} \\
 & $\pgap(\pml{5})$ & $\pgap(\pml{7})$ & $\pgap(\pml{11})$  \\ \hline
 $\begin{array}{r}
 g=8 \\  \hline s=26 \\ s=62 \\ s=242 \end{array}$ &
 $\begin{array}{r}
 0 \\ 1 \\  1 \\ 1 \end{array}$ &
 $\begin{array}{rl}
2 & = 5\cdot 0 + 2 \\
5 & = 4\cdot 1 + 1\\
5 & = 4\cdot 1 + 1 \\
3 & = 3\cdot 1 \end{array}$ &
 $\begin{array}{rl}
28 & = 9\cdot 2 + 10 \\
43 & = 8\cdot 5 + 3\\ 
43 & = 8\cdot 5 + 3 \\
21 & = 7\cdot 3 \end{array}$ \\ \hline
  \end{tabular}
\end{center}

We now state this action as a general lemma for any constellation, which 
includes gaps as constellations of length one.


\begin{lemma}\label{Lemma2p}
For $p_k \ge 3$, let $s$ be a constellation of sum $g$ and length $j$, such that $g < 2\cdot p_{k+1}$.

Then for each instance of $s$ in $\pgap(\pml{p_k})$, in forming $\pgap(\pml{p_{k+1}})$, 
in step R2 we create $p_{k+1}$ copies of this instance of $s$, and the $j+1$ closures in step R3 
occur in distinct copies.  

Thus, under the recursion at this stage of the sieve,
each instance of $s$ in $\pgap(\pml{p_k})$generates $p_{k+1}-j-1$ copies of $s$ in 
$\pgap(\pml{p_{k+1}})$; the interior closures generate $j-1$ constellations of sum $g$ and
length $j-1$; and the two exterior closures increase the sum of the resulting constellation in two 
distinct copies, removing these from being driving terms for the gap $g$.
\end{lemma}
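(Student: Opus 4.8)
The plan is to build directly on Lemma~\ref{DelLemma} and on the observation, already used in the motivating examples, that in step R3 the distances between successive closures are governed by the entries of $p_{k+1}*\pgap(\pml{p_k})$, whose minimum value is $2p_{k+1}$. First I would fix an instance of the constellation $s = g_{i}\,g_{i+1}\cdots g_{i+j-1}$ in $\pgap(\pml{p_k})$, occupying positions $i$ through $i+j-1$ and spanning from generator $\gamma_{i-1}$ to $\gamma_{i+j-1}$, so that $s$ is ``surrounded'' by the $j+1$ closure sites at indices $i-1, i, i+1, \ldots, i+j-1$ (the left exterior, the $j-1$ interior, and the right exterior). After step R2 there are exactly $p_{k+1}$ copies of this block of $j$ gaps, indexed by the shift $m\cdot\pml{p_k}$ for $m = 0,\ldots,p_{k+1}-1$; this is the first assertion. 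I would then invoke Lemma~\ref{DelLemma}: each of the $j+1$ possible closure sites bounding or interior to $s$ occurs exactly once among these $p_{k+1}$ copies, i.e.\ for each such site there is a unique shift $m$ at which the generator sitting at that site is $\equiv 0 \bmod p_{k+1}$.

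The heart of the argument is then to show these $j+1$ closures land in $j+1$ \emph{distinct} copies. Here is where the hypothesis $g < 2p_{k+1}$ enters: within a single copy of $s$, the total span from the left exterior closure site to the right exterior closure site is exactly $g$ (the sum of the $j$ gaps), which is strictly less than $2p_{k+1}$. But any two closures that occur in step R3 are separated by a distance that is a sum of one or more entries of $p_{k+1}*\pgap(\pml{p_k})$, hence at least $2p_{k+1}$ — as shown in Remark~\ref{EasyRmk}(ii) the very first such gap between closures is $p_{k+1}(p_{k+1}-1) \ge 2p_{k+1}$, and every entry of $p_{k+1}*\pgap(\pml{p_k})$ is $p_{k+1}$ times a gap $\ge 2$. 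So two closures within the same copy of $s$ would be at distance $\le g < 2p_{k+1}$, a contradiction. Therefore the $j+1$ closures occur in $j+1$ distinct copies, leaving $p_{k+1}-(j+1)$ copies of $s$ untouched.

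Finally I would read off the three conclusions from the local effect of a single closure. A closure at an interior site merges two adjacent gaps of $s$ into one, producing a constellation of the same total sum $g$ but length $j-1$; since the $j-1$ interior sites lie in distinct copies, we get $j-1$ such constellations (for $j=1$ there are no interior sites and this is vacuous, consistent with the $g=2,4$ examples). A closure at the left exterior site merges $g_{i-1}$ into $g_i$, and at the right exterior site merges $g_{i+j-1}$ into $g_{i+j}$; in either case the resulting block containing the image of $s$ has sum $g + g_{i-1} > g$ or $g + g_{i+j} > g$, so that copy no longer contributes a driving term for the gap $g$. The remaining $p_{k+1}-j-1$ copies reproduce $s$ verbatim. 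I do not anticipate a serious obstacle: the only subtlety is bookkeeping the correspondence between gap positions and generator indices (equation~\eqref{EqGen}) carefully enough that ``the closure site bounding $s$ on the left'' is unambiguously the generator $\gamma_{i-1}$, and confirming that when $j$ is large enough that $s$ might wrap around the end of the cycle the same counting still applies — this is handled exactly as the wrap-around remark in the $\pgap(\pml{7})$ example, since Lemma~\ref{DelLemma} is a statement about the cyclic structure.
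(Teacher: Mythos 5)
Your proof is correct and follows essentially the same route the paper takes: it combines Lemma~\ref{DelLemma} (each of the $j+1$ possible closures occurs exactly once among the $p_{k+1}$ copies) with the observation that consecutive closures in step R3 are separated by entries of $p_{k+1}*\pgap(\pml{p_k})$, each at least $2p_{k+1}$, so the hypothesis $g<2p_{k+1}$ forces the closures into distinct copies. Your write-up merely makes explicit the bookkeeping (closure sites at generators $\gamma_{i-1},\ldots,\gamma_{i+j-1}$, the wrap-around case) that the paper leaves implicit in its remarks following the lemma.
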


The proof is a straightforward application of Lemma~\ref{DelLemma}, 
but we do want to emphasize the role that the condition $g < 2\cdot p_{k+1}$
plays.  In step R3 of the recursion, as we perform closures across the $p_{k+1}$ concatenated copies
of $\pgap(\pml{p_k})$, the distances between the closures is given by the elementwise product
$p_{k+1} * \pgap(\pml{p_k})$.  Since the minimum gap in $\pgap(\pml{p})$ is $2$, the minimum 
distance between closures is $2\cdot p_{k+1}$.  And the condition $g < 2\cdot p_{k+1}$ ensures 
that the closures will
therefore occur in distinct copies of any instance of the constellation in $\pgap(\pml{p_k})$
created in step R2.

The count in Lemma~\ref{Lemma2p} is scoped to instances of a constellation.
These instances may overlap, but the count still holds.  For example, the gap $g=10$ has a driving
term $s=424$ of length three.  In $\pgap(\pml{5})= 64242462$, the two occurrences of $s=424$
overlap on a $4$.  The exterior closure for one is an interior closure for the other.  The count given
in the lemma tracks these automatically.

We illustrate Lemma~\ref{Lemma2p} in Figure~\ref{SystemFig}.  

\begin{figure}[t]
\centering
\includegraphics[width=4.875in]{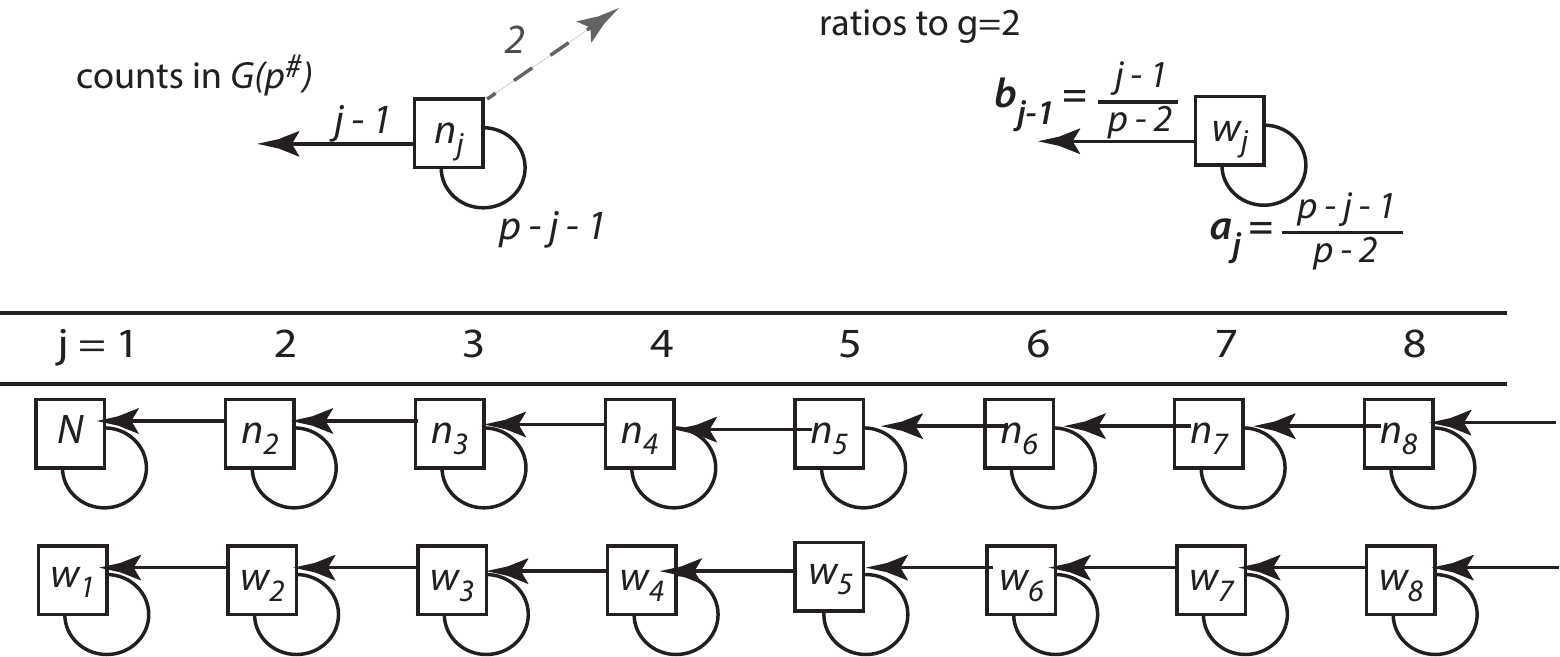}
\caption{\label{SystemFig} This figure illustrates the dynamic system of
Lemma~\ref{Lemma2p} through stages of the recursion for
$\pgap(\pml{p})$.
The coefficients of the system at each stage of the recursion
are independent of the specific gap and its driving terms.  
We illustrate the system for the recursive counts
$n_j$ for a gap and its driving terms.  Since the raw counts are superexponential, we take
the ratio $w_j$ of the count for each constellation to $n_{2,1}$ since $g=2$ has no driving terms.}
\end{figure}

A direct result of Lemma~\ref{DelLemma} and Lemma~\ref{Lemma2p} is an
exact description of the growth of the populations of various constellations across all stages
of Eratosthenes sieve.  (Keep in mind that a gap is a constellation of length $1$.)

\begin{theorem}\label{ThmGrowth}
If $s$ is any constellation in $\pgap(\pml{p_k})$ of length $j$ and sum $g < 2p_{k+1}$,
with $n_{s,j+1}(\pml{p_k})$ driving terms of length $j+1$ in $\pgap(\pml{p_k})$,
then 
$$n_{s,j}(\pml{p_{k+1}}) = (p_{k+1}-j-1)\cdot n_{s,j}(\pml{p_k}) + 1\cdot n_{s,j+1}(\pml{p_k}).$$
\end{theorem}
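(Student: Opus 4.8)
The plan is to read Theorem~\ref{ThmGrowth} directly off Lemma~\ref{Lemma2p}, tracking separately where each copy of $s$ in $\pgap(\pml{p_{k+1}})$ comes from. Fix $s$ of length $j$ and sum $g<2p_{k+1}$. There are exactly two provenances for an instance of $s$ in the next cycle: either it is (a copy of) an instance of $s$ already present in $\pgap(\pml{p_k})$ that survived step R3 intact, or it is newly created by an interior closure acting on a driving term of length $j+1$. The hypothesis $g<2p_{k+1}$ is what keeps these two cases clean, since by the distance bound in step R3 (minimum spacing $2p_{k+1}$ between closures) no instance of sum $<2p_{k+1}$ can receive two closures, so the bookkeeping in Lemma~\ref{Lemma2p} applies to every relevant constellation.

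First I would count the contribution from pre-existing instances of $s$. By Lemma~\ref{RecursLemma} step R2, each of the $n_{s,j}(\pml{p_k})$ instances of $s$ yields $p_{k+1}$ concatenated copies; by Lemma~\ref{Lemma2p}, exactly $j+1$ of those copies are hit by a closure in step R3 (the $j-1$ interior closures raise the length to $j-1$, and the $2$ exterior closures lengthen the constellation on one side), leaving $p_{k+1}-j-1$ copies of $s$ untouched. This gives the term $(p_{k+1}-j-1)\cdot n_{s,j}(\pml{p_k})$. Second, I would count newly created instances: any instance of $s$ in $\pgap(\pml{p_{k+1}})$ that is not one of these survivors must have arisen by closing a single gap inside a longer constellation whose remaining gaps already matched $s$ — that is, from a driving term of length $j+1$. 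Applying Lemma~\ref{Lemma2p} to a driving term $t$ of length $j+1$ and sum $g$: of its $p_{k+1}$ copies, the $j$ interior closures produce constellations of length $j$ and sum $g$, and among those, by the definition of ``driving term for $s$'' exactly the closures at the designated position(s) produce $s$ itself. Summing over all length-$(j+1)$ driving terms for $s$ — each contributing exactly $1$ new copy of $s$, per the definition of $n_{s,j+1}$ as counting driving terms — yields the term $1\cdot n_{s,j+1}(\pml{p_k})$.

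Finally I would check that these two sources are exhaustive and mutually exclusive: every occurrence of $s$ in $\pgap(\pml{p_{k+1}})$ sits inside exactly one of the $p_{k+1}$ concatenated copies of $\pgap(\pml{p_k})$ together with (possibly) one closure inside or at the boundary of the window it occupies, and by the $g<2p_{k+1}$ spacing bound it meets at most one closure; if it meets none it is a survivor, if it meets one interior to a length-$(j+1)$ window it is newly created from a driving term, and Lemma~\ref{DelLemma} guarantees each such closure event is realized exactly once, so there is no double counting. Adding the two contributions gives
\[
n_{s,j}(\pml{p_{k+1}}) = (p_{k+1}-j-1)\cdot n_{s,j}(\pml{p_k}) + n_{s,j+1}(\pml{p_k}),
\]
as claimed.

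The main obstacle is the second step — carefully arguing that no instance of $s$ in the new cycle escapes this classification, in particular handling instances that straddle the seam between two concatenated copies of $\pgap(\pml{p_k})$ or that overlap the window of a driving term in more than one way. The remark following Lemma~\ref{Lemma2p} about overlapping instances (e.g.\ the two occurrences of $424$ in $64242462$ sharing a $4$) shows the count is robust to overlaps, but I would want to state explicitly that ``driving term of length $j+1$'' is counted with multiplicity equal to the number of interior-closure positions that yield $s$, so that the coefficient on $n_{s,j+1}$ is genuinely $1$ per driving term rather than per (term, position) pair; once that convention is fixed, the identity follows.
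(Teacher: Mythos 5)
Your proposal is correct and follows essentially the same route as the paper, which states Theorem~\ref{ThmGrowth} as a direct consequence of Lemma~\ref{DelLemma} and Lemma~\ref{Lemma2p} without writing out a separate argument: the surviving-copies term and the one-new-copy-per-driving-term term are exactly the intended decomposition. Your added care about exhaustiveness and the multiplicity convention for driving terms is sensible but not a departure from the paper's reasoning (and the multiplicity worry is in fact vacuous, since positive gaps prevent two distinct single closures of the same length-$(j+1)$ constellation from both yielding $s$).
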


From this theorem, we note that the coefficients for the population model do not depend on the
constellation $s$.  The first-order growth of the population of every constellation $s$ of 
length $j$ and sum $g < 2p_{k+1}$ is given by the factor $p_{k+1}-j-1$.  This is independent
of the sequence of gaps within $s$.  

Although the asymptotic growth of all constellations of length $j$
is equal, the initial conditions and driving terms are important.
As we have seen above, constellations may differ
significantly in the populations of their driving terms.
Brent \cite{Brent} made analogous observations for single gaps ($j=1$).
His Table 2 indicates the importance of the lower-order effects in
estimating relative occurrences of certain gaps.

\subsection{Relative populations of $g=2, 6, 8, 10, 12$}
What can we say about the relative populations of the gaps $g=2, 6, 8, 10, 12$ over later stages of the
sieve?  The population of every gap grows by a factor of $p-2$.  The populations differ by the presence 
of driving terms of various lengths and by the initial conditions.

We proceed by normalizing each population by the population of the gap $g=2$.  
To compare the populations of any gap $g$ to the gap $2$ over later stages of the sieve, 
we take the ratio of the population of a gap to $n_{2,1}$.
\begin{equation}\label{EqDefW}
\Rat{1}{g}(\pml{p}) = \frac{ n_{g,1}(\pml{p})}{n_{2,1}(\pml{p})}.
\end{equation}
Letting $n_{g,j}(\pml{p})$ denote the number of all driving terms of sum $g$ and length $j$
in the cycle of gaps $\pgap(\pml{p})$, we can extend this definition to
\begin{equation*}
\Rat{j}{g}(\pml{p}) = \frac{ n_{g,j}(\pml{p})}{n_{2,1}(\pml{p})}.
\end{equation*}

\renewcommand\arraystretch{2}
These ratios for the gaps from $g=6$ to $g=12$ are given by the $4$-dimensional dynamic system:
\begin{eqnarray*}
\left[\begin{array}{c} \Rat{1}{g} \\ \Rat{2}{g} \\ \Rat{3}{g} \\ \Rat{4}{g} \end{array} \right] _{\pml{p_{k+1}}} & = &
\left[\begin{array}{cccc} 1 & \frac{1}{p_{k+1}-2} & 0 & 0 \\ 0 & \frac{p_{k+1}-3}{p_{k+1}-2} & \frac{2}{p_{k+1}-2} & 0 \\
 0 & 0 & \frac{p_{k+1}-4}{p_{k+1}-2} & \frac{3}{p_{k+1}-2} \\ 0 & 0 & 0 & \frac{p_{k+1}-5}{p_{k+1}-2} \end{array} \right] \cdot
\left[\begin{array}{c} \Rat{1}{g} \\ \Rat{2}{g} \\ \Rat{3}{g} \\ \Rat{4}{g} \end{array} \right]_{\pml{p_k}} \\
 & & \\
{\rm or} & & \\
 \wgp{g}{\pml{p_{k+1}}} & = &
   \MJP{1:4}{p_{k+1}} \cdot \wgp{g}{\pml{p_k}} 
\end{eqnarray*}
\renewcommand\arraystretch{1}

For this dynamic system, our attention turns to the $4 \times 4$ system matrix $M_{1:4}(p)$ and its
eigenstructure.  The notation $M_{j_1:J}$ provides the range of lengths of constellations over which we 
apply the system.  Anticipating our work below on constellations, we include the initial index $1$ in
$M_{1:J}$ to indicate that the system is modeling the populations of gaps.

Note that the system matrix depends on the prime $p$ (but not on the gap $g$), so that as we iterate, 
we have to keep track of this dependence.
$$
\wgp{g}{\pml{p_k}}  =  \MJP{1:4}{p_k} \cdot \MJP{1:4}{p_{k-1}} \cdots \MJP{1:4}{p_1} \wgp{g}{\pml{p_0}} 
$$

A simple calculation shows that we are in luck.  The eigenvalues of $M_{1:4}(p)$ depend on $p$
but the eigenvectors do not.  We write the eigenstructure of $M_{1:4}(p)$ as
\begin{equation*}
\begin{array}{rcccccc}
 \MJP{1:4}{p} & =  & R_4 & \cdot &  \left. \Lambda_{1:4} \right|_p & \cdot & L_4 \\
  & & & & & & \\
 & = &  \left[\begin{array}{rrrr} 1 & -1 & 1 & -1 \\ 0 & 1 & -2 & 3 \\ 0 & 0 & 1 & -3 \\ 0 & 0 & 0 & 1 \end{array}\right]
  & \cdot & \left[\begin{array}{rccc} 1 & 0 & 0 & 0 \\ 0 & \frac{p-3}{p-2} & 0 & 0  \\ 0 & 0 & \frac{p-4}{p-2} & 0 \\
  0 & 0 & 0 & \frac{p-5}{p-2} \end{array}\right] 
  & \cdot & \left[\begin{array}{rrrr} 1 & 1 & 1 & 1 \\ 0 & 1 & 2 & 3 \\ 0 & 0 & 1 & 3 \\ 0 & 0 & 0 & 1 \end{array}\right]
\end{array}
\end{equation*}
in which $\Lambda(p)$ is the diagonal matrix of eigenvalues, $R$ is the matrix of right eigenvectors,
and $L$ is the matrix of left eigenvectors, such that $L\cdot R = I$.  It is interesting that $L$ is an upper
triangular Pascal matrix and $R$ is an upper triangular Pascal matrix of alternating sign.

Since the eigenvectors do not depend on $p$, the iterative system simplifies:
\begin{eqnarray*}
\wgp{g}{\pml{p_k}} &  = & \MJP{1:4}{p_k} \cdot \MJP{1:4}{p_{k-1}} \cdots \MJP{1:4}{p_1} \wgp{g}{\pml{p_0}} \\
  & = & R_4 \cdot \AJP{1:4}{p_k} \cdot \AJP{1:4}{p_{k-1}} \cdots \AJP{1:4}{p_1} \cdot L_4 \wgp{g}{\pml{p_0}} 
\end{eqnarray*}
The dependence on $p$ leads to a product of diagonal matrices.  
 
Fixing $p_0$, we define $M^k_{1:4} = M_{1:4}(p_k) \cdots M_{1:4}(p_1)$:
\begin{equation*}
\begin{array}{rccc} M^k_{1:4}  = & R_4 & \AJP{1:4}{p_k} \cdots \AJP{1:4}{p_1} & L_4 \\
  & & & \\
  = & \left[\begin{array}{rrrr} \lil{1} & \lil{-1} & \lil{1} & \lil{-1} \\ \lil{0} & \lil{1} & \lil{-2} & \lil{3} \\ 
  \lil{0} & \lil{0} & \lil{1} & \lil{-3} \\ \lil{0} & \lil{0} & \lil{0} & \lil{1} \end{array}\right] 
 \cdot &  \left[\begin{array}{rccc} \lil{1} & \lil{0} & \lil{0} & \lil{0} \\ 
  \lil{0} & \lil{\prod \frac{p-3}{p-2}} & \lil{0} & \lil{0} \\ 
  \lil{0} & \lil{0} & \lil{\prod \frac{p-4}{p-2}} & \lil{0} \\    
  \lil{0} & \lil{0} & \lil{0} & \lil{\prod \frac{p-5}{p-2}} \end{array}\right]_{p_1}^{p_k} 
  & \cdot \left[\begin{array}{rrrr} \lil{1} & \lil{1} & \lil{1} & \lil{1} \\ \lil{0} & \lil{1} & \lil{2} & \lil{3} \\ 
  \lil{0} & \lil{0} & \lil{1} & \lil{3} \\ \lil{0} & \lil{0} & \lil{0} & \lil{1} \end{array}\right] 
  \end{array}
 \end{equation*}
 
 The eigenvalues are the products of the $a_j$ across ranges of primes:  
 $$\lambda_j^k = a_j^k = \prod_{p_1}^{p_k} \frac{p-j-1}{p-2}. $$
  
 For any gap $g < 2p_1$ that has driving terms of a maximum length of $4$, once we know the 
 initial populations in $\pgap(\pml{p_0})$, we can use the eigenstructure
 of $M^k_{1:4}$ to completely characterize the populations of $g$ and its driving terms in
 a very compact form.  Starting with the initial conditions
 $$
\wgp{g}{\pml{p_0}}  = \left[ \begin{array}{c} 
 w_{g,1} \\ w_{g,2} \\ w_{g,3} \\ w_{g,4} \end{array} \right]_{\pml{p_0}},
 $$
 we apply the left eigenvectors $L_4$ to obtain the coordinates relative to the basis
 of right eigenvectors $R_4$.  After this transformation, we can apply the actions of 
 the eigenvalues $\Lambda_{1:4}(p)$ directly to the individual right eigenvectors.  
\begin{eqnarray*}
\wgp{g}{\pml{p_k}} & = & M^k_{1:4} \cdot \wgp{g}{\pml{p_0}} \\
 & = & R_4 \cdot \AJP{1:4}{p_k} \cdots \AJP{1:4}{p_1} \cdot L_4 \cdot \wgp{g}{\pml{p_0}} 
  \gap = \gap R_4 \cdot \Lambda_{1:4}^k \cdot L_4 \cdot \wgp{g}{\pml{p_0}} \\
 & = & (L_{4,1} \cdot \wgp{g}{\pml{p_0}}) R_{4,1} \; + \; \lambda_2^k (L_{4,2} \cdot \wgp{g}{\pml{p_0}}) R_{4,2}
  \; + \cdots \\
  & = & \sum_j w_{g,j}(\pml{p_0}) e_1 \; + \; \lambda_2^k (L_{4,2} \cdot \wgp{g}{\pml{p_0}}) R_{4,2}
  \; + \cdots 
  \end{eqnarray*}
  
 Right away we observe that the asymptotic ratio $w_{g,1}(\infty)$ of the gap $g$ to the gap $2$ is the sum
 of the initial ratios of all of $g$'s driving terms.  We also observe that the ratio converges
 to the asymptotic value as quickly as $a_2^k \fto 0$.  While $a_3^k$ becomes small pretty
 quickly, the convergence of $a_2^k$ is slow. Figure~\ref{AjkFig} plots $a_2^k$ and $a_3^k$
 for $p_0=13$ up to $p \approx 3\cdot 10^{15}$.

\begin{figure}[t]
\centering
\includegraphics[width=5in]{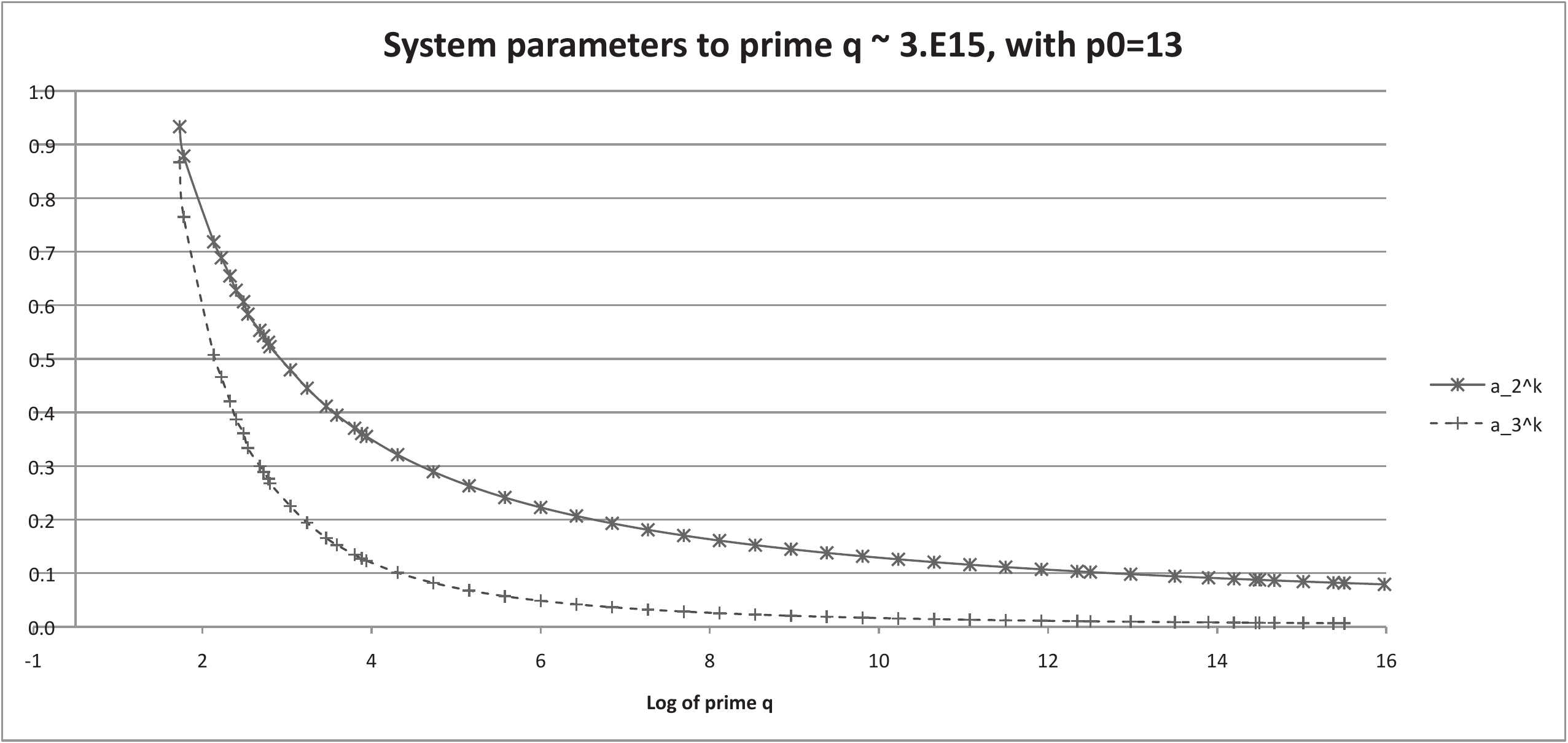}
\caption{\label{AjkFig} A graph of $a_2^k$ and $a_3^k$, with $p_0=13$, up to 
about $p \approx 3\cdot 10^{15}$. The dominant eigenvalue for $M_J$ is $1$, the second
eigenvalue is $a_2$ and the third $a_3$.  So the rate of convergence to the asymptotic 
ratio $w_{g,1}(\infty) = N_g / N_2$ is governed by how quickly $a_2^k \fto 0$. }
\end{figure}
 
 Let us apply this asymptotic analysis to all of the gaps that satisfy the required conditions
 in $\pgap(\pml{5})$.  Using
 $p_0=5$, we observe in $\pgap(\pml{5})=64242462$ that the gaps $g = 4, 6, 8, 10, 12$
 have driving terms of length at most $4$ and satisfy $g < 2 \cdot 7$.
 
 \begin{center}
 \begin{tabular}{|r|rrrr|r|} \hline
\multicolumn{6}{|c|}{$\pgap(\pml{5})=64242462$ } \\
gap & \multicolumn{4}{|c|}{$w_{g,j}(\pml{5})$} & $w_{g,1}(\infty)$ \\ 
 $g$ & $j=1$ & $2$ & $3$ & $4$ &  \\ \hline
 $4$ & $1$ & $0$ & $0$ & $0$ & $1$ \\
 $6$ & $2/3$ & $4/3$ & $0$ & $0$ & $2$ \\
 $8$ & $0$ & $2/3$ & $1/3$ & $0$ & $1$ \\
 $10$ & $0$ & $2/3$ & $2/3$ & $0$ & $4/3$ \\
 $12$ & $0$ & $0$ & $4/3$ & $2/3$ & $2$ \\ \hline
 \end{tabular}
\end{center}

To obtain the asymptotic ratio $w_{g,1}(\infty)$, we simply add together the initial 
ratios of all driving terms.  These results tell us that as quickly as $a_2^k \fto 0$,
the number of occurrences of the gap $g=6$ and the gap $g=12$ in the
cycle of gaps $\pgap(\pml{p})$ for Eratosthenes sieve each approach double the number
of gaps $g=2$.  Despite having driving terms of length two and of length three, the number of
gaps $g=8$ approaches the number of gaps $g=2$ in later stages of the sieve, and the
ratio of the number of gaps $g=10$ to the number of gaps $g=2$ approaches $4/3$.

These are {\em not} probabilistic estimates.  These ratios are based on the actual counts of the 
populations of these gaps and their driving terms across stages of Eratosthenes sieve.

\section{A model for populations across iterations of the sieve}\label{GapSection}
We now identify a general discrete dynamic system that provides exact counts
of a gap and its driving terms.  These raw counts grow
superexponentially, and so to better understand their behavior we take the ratio
of a raw count to the number of gaps $g=2$ at each stage of the sieve.
In the work above we created and examined this dynamic system for driving
terms up to length $3$.  Here we generalize this approach by considering driving
terms up to length $J$, for any $J$.

Fix a sufficiently large size $J$.
For any gap $g$ that has driving terms of lengths up to $j$, with $j \le J$, we form a vector
of initial values $\left. \bar{w}\right|_{p_{0}}$, whose $i^{\rm th}$ entry is the ratio
of the number of driving terms for $g$ of length $i$ in $\pgap(\pml{p_0})$
to the number of gaps $2$ in this cycle of gaps.

Generalizing our work for $J=4$ above, we model the population of the gap $g$ and its
driving terms across stages of Eratosthenes sieve as a discrete dynamic system.
\begin{eqnarray*}
\left. \bar{w}\right|_{\pml{p_k}} & = & M_{1:J}(p_k) \cdot \left. \bar{w}\right|_{\pml{p_{k-1}}} \\
 & = & \left[\begin{array}{cccccc}
1 & b_1 & 0 & \cdots & & 0 \\
0  & a_2 & b_2 & \ddots &  & 0 \\
 & 0 & a_3 & b_3 & \ddots & 0 \\
  \vdots & & \ddots & \ddots & \ddots & 0 \\
  0 & & \cdots & & a_{J-1} & b_{J-1} \\
  0 & & \cdots & & 0 & a_J \end{array} \right]_{p_k} \cdot \left. \bar{w}\right|_{\pml{p_{k-1}}} 
\end{eqnarray*}
in which
\begin{equation}\label{Eqaj}
a_j(p)  =  \frac{p-j-1}{p-2} \biggap {\rm and} \biggap b_j(p) = \frac{j}{p-2}.
\end{equation}

Iterating this discrete dynamic system from the initial conditions at $p_0$ up through $p_k$, we have
\begin{eqnarray*}
\left. \bar{w}\right|_{\pml{p_{k}}} & = & \left. M_{1:J} \right|_{p_{k}} \cdot \left. \bar{w}\right|_{\pml{p_{k-1}}} 
 \; = \left. M_{1:J} \right|_{p_{k}} \cdots \left. M_{1:J} \right|_{p_{1}} \cdot \left. \bar{w}\right|_{\pml{p_0}} \\
 & = & M_{1:J}^k  \cdot \left. \bar{w}\right|_{\pml{p_{0}}} 
\end{eqnarray*}
The matrix $M_{1:J}$ does not depend on the gap $g$.  It does depend on the prime $p_k$,
and we use the exponential notation $M_{1:J}^k$ to indicate the product of the $M$'s over
the indicated range of primes.

That $M_{1:J}$ does not depend on the gap $g$ is interesting.  This means that the recursion 
treats all gaps fairly.  The recursion itself is not biased toward certain gaps or constellations.
Once a gap has driving terms in $\pgap(\pml{p})$, the populations across all further stages of 
the sieve are completely determined.

With $M_{1:J}^k$ we can calculate the ratios $\Rat{j}{g}(p_k)$
for the complete system of gaps and their driving terms, relative to the
population of the gap $2$, for the cycle of gaps $\pgap(\pml{p_k})$ (here, $p_k$ is the $k^{\rm th}$
prime after $p_0$).  With $J=4$ we calculated above the ratios for $g=6, 8, 10, 12.$
For $g=30$, we need $J=8$.

Fortunately, we can completely describe the eigenstructure for $\left. M_{1:J}\right|_{p}$,
and even better -- {\em the eigenvectors for $M_{1:J}$ do not depend on the prime $p$}.  This means
that we can use the eigenstructure to provide a simple description of the behavior of this 
iterative system as $k \fto \infty$.

\subsection{Eigenstructure of $M_{1:J}$}
We list the eigenvalues, the left eigenvectors and the right eigenvectors for $M_{1:J}$, writing these
in the product form
$$ M_{1:J}  =  R \cdot \Lambda \cdot L $$
with $LR = I$.
For the general system $M_{1:J}$, the upper triangular entries of $R$ and $L$ are binomial coefficients,
with those in $R$ of alternating sign; and the eigenvalues are the $a_j$ defined in Equation~\ref{Eqaj}
above.
\begin{eqnarray*}
R_{ij} & = & \left\{ \begin{array}{cl}
(-1)^{i+j}\Bi{j-1}{i-1} & {\rm if} \; i \le j \\ & \\
0 & {\rm if} \; i > j \end{array} \right. \\
& & \\
\Lambda & = & {\rm diag}(1, a_2, \ldots, a_J) \\
 & & \\
L_{ij}  & = & \left\{ \begin{array}{ll}
\Bi{j-1}{i-1} & {\rm if} \; i \le j \\ & \\
0 & {\rm if} \; i > j \end{array} \right. 
\end{eqnarray*}

For any vector $\bar{w}$, multiplication by the left eigenvectors 
(the rows of $L$) yields the 
coefficients for expressing this vector of initial conditions over the basis given by the 
right eigenvectors (the columns of $R$):
$$ \bar{w} =  (L_{1 \cdot} \bar{w}) R_{\cdot 1} + \cdots + (L_{J \cdot} \bar{w}) R_{\cdot J}
$$

\begin{lemma}\label{AsymLemma}
Let $g$ be a gap and $p_0$ a prime such that $g < 2 p_1$.
In $\pgap(\pml{p_0})$ let the initial ratios for $g$ and its driving terms be given by
$\wgp{g}{\pml{p_0}}$.
Then the ratio of occurrences of this gap $g$
to occurrences of the gap $2$ in $\pgap(\pml{p})$ as $p \fto \infty$ converges to
the sum of the initial ratios across the gap $g$ and all its driving terms:
$$
\Rat{1}{g}(\infty) = L_{1 \cdot} \wgp{g}{\pml{p_0}} = \sum_j \left. \Rat{j}{g} \right|_{\pml{p_0}}.
$$
\end{lemma}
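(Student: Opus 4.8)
The plan is to iterate the dynamic system $\wgp{g}{\pml{p_k}} = \MJP{1:J}{p_k}\cdots\MJP{1:J}{p_1}\,\wgp{g}{\pml{p_0}} = M_{1:J}^k\,\wgp{g}{\pml{p_0}}$ established above all the way to $k\fto\infty$, and read off the limit from the eigen-decomposition $M_{1:J}=R\,\Lambda\,L$. Two preliminary remarks. First, the dimension: a closure merely merges two adjacent gaps, so it preserves the total; hence every driving term of $g$ has sum $g$, and since each gap is at least $2$ it has length at most $g/2$. So any $J\ge g/2$ makes the system finite and closed, with no weight feeding in from longer constellations. Second, the system is genuinely valid at \emph{every} stage: Theorem~\ref{ThmGrowth} requires $g<2p_{k+1}$ at the step $\pgap(\pml{p_k})\to\pgap(\pml{p_{k+1}})$, and because the primes increase, the single hypothesis $g<2p_1$ (with $p_1$ the first prime after $p_0$) secures this for all $k\ge 0$; hence the matrix identity $\wgp{g}{\pml{p_k}}=M_{1:J}^k\,\wgp{g}{\pml{p_0}}$ holds \emph{exactly}, not merely asymptotically.

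Next I would use the stated eigenstructure, whose crucial feature is that $R$ and $L$ are independent of $p$. Then $M_{1:J}^k = R\,\bigl(\AJP{1:J}{p_k}\cdots\AJP{1:J}{p_1}\bigr)\,L$, and the product of the diagonal matrices is again diagonal, with $j$-th entry $a_j^k := \prod_{i=1}^{k}\frac{p_i-j-1}{p_i-2}$ and $a_1^k\equiv 1$. Expanding $\wgp{g}{\pml{p_0}}$ over the eigenbasis and using $L_{1j}=\Bi{j-1}{0}=1$ together with $R_{i1}=(-1)^{i+1}\Bi{0}{i-1}$, which gives $R_{\cdot 1}=e_1$, the first coordinate of $\wgp{g}{\pml{p_k}}$ is
\[ \Rat{1}{g}(\pml{p_k}) \;=\; \sum_j \left.\Rat{j}{g}\right|_{\pml{p_0}} \;+\; \sum_{j=2}^{J} a_j^k\,\bigl(L_{j\cdot}\,\wgp{g}{\pml{p_0}}\bigr)\,R_{1j}, \]
in which each coefficient $\bigl(L_{j\cdot}\,\wgp{g}{\pml{p_0}}\bigr)R_{1j}$ is a constant independent of $k$. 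The claim thus reduces to showing the tail sum vanishes, i.e. $a_j^k\fto 0$ for every $j\ge 2$.

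For that last step, write $a_j(p)=1-\frac{j-1}{p-2}$. For $j\ge 2$ every factor lies in $[0,1)$ (if some factor is $0$ then $a_j^k=0$ for all $k$ and there is nothing to prove), so
\[ \log a_j^k \;=\; \sum_{i=1}^{k}\log\Bigl(1-\tfrac{j-1}{p_i-2}\Bigr) \;\le\; -(j-1)\sum_{i=1}^{k}\frac{1}{p_i-2} \;\fto\; -\infty, \]
because $\sum_p 1/p$ diverges (Euler/Mertens) and dropping the finitely many primes $\le p_0$ leaves it divergent. Hence $a_j^k\fto 0$, the tail sum above tends to $0$, and
\[ \Rat{1}{g}(\infty) \;=\; \sum_j \left.\Rat{j}{g}\right|_{\pml{p_0}} \;=\; L_{1\cdot}\,\wgp{g}{\pml{p_0}}, \]
which is the assertion.

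I expect the one point needing care to be the dimension claim — verifying that $J=g/2$ really closes the system, so that no constellation longer than $g/2$ can feed the population $\Rat{1}{g}$ — which follows from closures being sum-preserving together with the lower bound $2$ on gaps. After that, everything is bookkeeping on the already-established eigen-decomposition, and the sole analytic ingredient is the divergence of $\sum 1/p$. As a byproduct, the first displayed identity exhibits the rate of convergence: it is governed by $a_2^k=\prod_i\bigl(1-\tfrac{1}{p_i-2}\bigr)$, which decays only like $1/\log p_k$, matching the slow behavior seen around Figure~\ref{AjkFig}.
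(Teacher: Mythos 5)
Your proposal is correct and follows essentially the same route as the paper's proof: iterate the linear system, use the $p$-independent eigendecomposition $M_{1:J}=R\Lambda L$ with $L_{1\cdot}=[1\cdots 1]$ and $R_{\cdot 1}=e_1$, and let the subdominant terms decay. You add two details the paper leaves implicit — that $J\le g/2$ closes the system and that $a_j^k\fto 0$ via the divergence of $\sum 1/p$ — both of which are sound and strengthen the write-up.
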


\begin{proof}
Let $g$ have driving terms up to length $J$.  Then the ratios $\wgp{g}{\pml{p}}$ 
are given by  the iterative linear system
$$
\wgp{g}{\pml{p_k}} = M_J^k \cdot  \wgp{g}{\pml{p_0}}.
$$
From the eigenstructure of $M_J$, we have
$$
\wgp{g}{\pml{p_0}} = (L_1 \wgp{g}{\pml{p_0}})R_1 + (L_2 \wgp{g}{\pml{p_0}})R_2 
+ \cdots + (L_J \wgp{g}{\pml{p_0}})R_J ,
$$
and so
\begin{eqnarray} \label{EqDynSys}
M_J^k \wgp{g}{\pml{p_0}} &=& 
  (L_1 \wgp{g}{\pml{p_0}})R_1 + a_2^k (L_2 \wgp{g}{\pml{p_0}})R_2  + \cdots \\ 
 & &  \cdots + a_J^k (L_J \wgp{g}{\pml{p_0}})R_J.  \nonumber
\end{eqnarray}
We note that $L_{1 \cdot} = [1 \cdots 1]$, $\lambda_1 = 1$, and $R_{\cdot 1} = e_1$;
that the other eigenvalues $a_j^k \fto 0$ with $a_j^k > a_{j+1}^k$.   Thus as $k \fto \infty$
the terms on the righthand side decay to $0$ except for the first term, establishing the result.
\end{proof}

With Lemma~\ref{AsymLemma} and the initial values in $\pgap(\pml{13})$, 
we can calculate the asymptotic ratios
of the occurrences of the gaps $g = 6, 8, \ldots, 32$ to the gap $g=2$. 
These are the gaps that satisfy the condition $g < 2 \cdot 17$.
We tabulate these initial values in Table~\ref{G13Table}, and the asymptotic ratios.

In Table~\ref{G13Table} we use $p_0=13$ for our initial conditions since the prime 
$p=13$ is the first prime 
for which the conditions of Theorem~\ref{ThmGrowth} 
are satisifed for the next primorial $g=\pml{5} = 30$.  

\renewcommand\arraystretch{0.8}
\begin{table}
\begin{center}
\begin{tabular}{|r|rrrrrrrrr|r|} \hline
gap & \multicolumn{9}{c|}{ $n_{g,j}(13)$: driving terms of length $j$ in $\pgap(\pml{13})$ } & $w_{g,1}(\infty)$ \\ [1 ex]
$g$ & $j=1$ & $2$ & $3$ & $4$ & $5$ & $6$ & $7$ & $8$ & $9$ & \\ \hline 
 $\lil 2, \; 4$ & $\lil 1485$ & & & & & & & & & $\lil{1}$ \\
 $\lil 6$ & $\lil 1690$ & $\lil 1280$ & & & & & & & & $\lil{2}$ \\
 $\lil 8$ & $\lil 394$ & $\lil 902$ & $\lil 189$ & & & & & & &  $\lil{1}$ \\
 $\lil 10$ & $\lil 438$ & $\lil 1164$ & $\lil 378$ & & & & & & & $\lil{4/3}$ \\
 $\lil 12$ & $\lil 188$ & $\lil 1276$ & $\lil 1314$ & $\lil 192$ & & & & & & $\lil{2}$ \\
 $\lil 14$ & $\lil 58$ & $\lil 536$ & $\lil 900$ & $\lil 288$ & & & & & & $\lil{6/5}$ \\
 $\lil 16$ & $\lil 12$ & $\lil 252$ & $\lil 750$ & $\lil 436$ & $\lil 35$ & & & & &  $\lil{1}$ \\
 $\lil 18$ & $\lil 8$ & $\lil 256$ & $\lil 1224$ & $\lil 1272$ & $\lil 210$ & & & & & $\lil{2}$ \\
 $\lil 20$ & $\lil 0$ & $\lil 24$ & $\lil 348$ & $\lil 960$ & $\lil 600$ & $\lil 48$ & & & & $\lil{4/3}$ \\
 $\lil 22$ & $\lil 2$ & $\lil 48$ & $\lil 312$ & $\lil 784$ & $\lil 504$ & & & & & $\lil{10/9}$ \\
 $\lil 24$ & $\lil 0$ & $\lil 20$ & $\lil 258$ & $\lil 928$ & $\lil 1260$ & $\lil 504$ & & & & $\lil{2}$ \\
 $\lil 26$ & $\lil 0$ & $\lil 2$ & $\lil 40$ & $\lil 322$ & $\lil 724$ & $\lil 448$ & $\lil 84$ & & & $\lil{12/11}$ \\
 $\lil 28$ & $\lil 0$ & $\lil 0$ & $\lil 36$ & $\lil 344$ & $\lil 794$ & $\lil 528$ & $\lil 80$ & & & $\lil{6/5}$ \\
 $\lil 30$ & $\lil 0$ & $\lil 0$ & $\lil 10$ & $\lil 194$ & $\lil 1066$ & $\lil 1784$ & $\lil 816$ & $\lil 90$ & & $\lil{8/3}$ \\
 $\lil 32$ & $\lil 0$ & $\lil 0$ & $\lil 0$ & $\lil 12$ & $\lil 200$ & $\lil 558$ & $\lil 523$ & $\lil 172$ & $\lil 20$ & $\lil{1}$ \\ \hline
 \end{tabular}
\caption{ \label{G13Table} For small gaps $g$, 
this table lists the number of gaps and driving terms of length $j$
 that occur in the cycle of gaps $\pgap(\pml{13})$. We can use these as initial conditions
 for the population model in Equation~\ref{EqDynSys} of size $J \le 9$.}
 \end{center}
 \end{table}
\renewcommand\arraystretch{1}

Table~\ref{G13Table} begins to suggest that the ratios implied by Hardy and Littlewood's Conjecture B may
hold true in Eratosthenes sieve.  For the gaps $g=2, 4, 6, \ldots, 32$ the values of $w_{g,1}(\infty)$ are 
exactly equal to the factor in Conjecture B:
\[ w_{g,1}(\infty) = \sum_j w_{g,j}(\pml{13}) = \prod_{q>2, \; q | k} \frac{p-1}{p-2}. \]

The ratios discussed in this paper give the exact values of the relative frequencies of various gaps and
constellations as compared to the number of gaps $2$ at each stage of Eratosthenes sieve.
For gaps between primes, if the closures are at all fair as the sieving process continues, then 
these ratios in stages of the sieve should
also be good indicators of the relative occurrence of these gaps and constellations  
among primes.

\subsection{Rate of convergence to $w_{g,1}(\infty)$.}
From Equation~(\ref{EqDynSys}) we can determine more specifically the rate at which
the ratio $w_{g,1}(\pml{p})$ converges to the asymptotic value $w_{g,1}(\infty)$.
For this, we note that we can approximate Equation~(\ref{EqDynSys}) by a polynomial
in $a_2^k$.

Table~\ref{AjkTable} displays the calculated values of $a_j^k$ for $j=1,\ldots,9$ over the range of primes 
$p_0 =13$ and $p_k = 999,999,999,989$. Since $p_0 =13$, the products in each $a_j$ start with $p_1=17$.
In the table we can see the decay of the $a_j^k$ toward $0$, but $a_2^k$ and $a_3^k$ 
are still making significant contributions when $p_k \approx 10^{12}$.

\begin{lemma}\label{AjkLemma}
$$a_j^k \approx (a_2^k)^{j-1}.$$
\end{lemma}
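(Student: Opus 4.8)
The plan is to show that the dominant correction to the asymptotic value is governed by $a_2^k$, and that all higher $a_j^k$ can be estimated as powers of it. First I would recall the explicit product formulas
\[
a_j^k = \prod_{p_1 \le p \le p_k} \frac{p-j-1}{p-2} = \prod_{p_1 \le p \le p_k}\left(1 - \frac{j-1}{p-2}\right),
\]
so that, taking logarithms,
\[
\log a_j^k = \sum_{p_1 \le p \le p_k} \log\!\left(1 - \frac{j-1}{p-2}\right).
\]
The key algebraic observation is that $\log(1-\frac{j-1}{p-2})$ is \emph{not} exactly $(j-1)\log(1-\frac{1}{p-2})$, but the two agree to first order: both equal $-\frac{j-1}{p-2} + O(p^{-2})$. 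Summing over primes $p \le p_k$, the leading terms are $-(j-1)\sum_p \frac{1}{p-2}$, which diverges like $(j-1)\log\log p_k$ by Mertens' theorem, while the discrepancy $\sum_p O(p^{-2})$ converges. Hence $\log a_j^k = (j-1)\log a_2^k + C_j + o(1)$ for a constant $C_j$ depending only on $j$ and $p_0$, which gives $a_j^k \sim c_j\,(a_2^k)^{j-1}$; the statement of the lemma is this asymptotic equivalence with the constant suppressed.

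The steps in order would be: (i) write each $a_j^k$ as a product over primes of $1 - \frac{j-1}{p-2}$; (ii) observe the pointwise comparison of factors — specifically that $1 - \frac{j-1}{p-2} \le \left(1 - \frac{1}{p-2}\right)^{j-1}$ for $p-2 > j-1$, by the Weierstrass-type inequality (or by convexity of $x \mapsto (1-x)^{j-1}$ / Bernoulli), giving the clean one-sided bound $a_j^k \le (a_2^k)^{j-1}$ for $p_k$ large enough; (iii) for the reverse direction, expand the logarithms and use that $\sum_p \left[(j-1)\log(1-\frac{1}{p-2}) - \log(1-\frac{j-1}{p-2})\right]$ is an absolutely convergent series of nonnegative terms each $O(p^{-2})$, so it is bounded by a constant, yielding $a_j^k \ge c_j (a_2^k)^{j-1}$; (iv) conclude $a_j^k \approx (a_2^k)^{j-1}$ in the sense intended (ratio bounded above and below, and $\to$ a constant). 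Since $a_1^k = 1$ trivially and the $j=2$ case is the identity, only $j \ge 3$ needs the argument.

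The main obstacle is deciding how much precision the paper actually wants to claim by the symbol ``$\approx$'': the cheap and fully rigorous statement is the two-sided inequality $c_j (a_2^k)^{j-1} \le a_j^k \le (a_2^k)^{j-1}$, whereas an honest asymptotic $a_j^k \sim c_j (a_2^k)^{j-1}$ requires the convergence argument in step (iii) via Mertens. I would present the inequality $a_j^k \le (a_2^k)^{j-1}$ as the rigorous backbone, since it is exactly what is needed to bound the tail of Equation~(\ref{EqDynSys}) by a polynomial in $a_2^k$, and then remark that the matching lower bound follows from Mertens' theorem so that the two quantities are of the same order. A secondary subtlety is that for small primes $p$ near $p_0$ with $p - 2 \le j - 1$ the factor $1 - \frac{j-1}{p-2}$ can be nonpositive; since the lemma is about the regime $p_k \to \infty$ and since $g < 2p_1$ already forces $j$ small relative to $p_1$ in the cases of interest, this is handled by absorbing finitely many initial factors into the constant $c_j$.
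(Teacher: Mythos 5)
Your proposal is correct, but it reaches the comparison by a different mechanism than the paper. The paper's proof is a purely algebraic telescoping identity: it writes
\[
\frac{q-j-1}{q-2} \;=\; \frac{q-j-1}{q-j}\cdot\frac{q-j}{q-j+1}\cdots\frac{q-3}{q-2},
\]
a product of exactly $j-1$ factors each of the form $\frac{q-m-3}{q-m-2}$, and then approximates each such factor by $\frac{q-3}{q-2}$; taking the product over primes gives $a_j^k \approx (a_2^k)^{j-1}$, and since each telescoped factor is at most $\frac{q-3}{q-2}$ the paper also records the one-sided bound $a_j^k < (a_2^k)^{j-1}$ as a byproduct. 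You obtain the same upper bound instead from the Bernoulli/Weierstrass inequality $1-\frac{j-1}{p-2}\le\bigl(1-\frac{1}{p-2}\bigr)^{j-1}$, and then go further than the paper by quantifying the reverse direction: the log-discrepancy is a convergent sum of $O(p^{-2})$ terms, so $a_j^k \sim c_j\,(a_2^k)^{j-1}$ with an explicit meaning for ``$\approx$'' (ratio tending to a constant), invoking Mertens to identify the divergent part. What the paper's telescoping buys is transparency and the fact that the approximation error in each factor is visibly small for $q$ large relative to $j$ (which is all the paper claims); what your route buys is a rigorous two-sided statement that actually justifies replacing $a_j^k$ by $(a_2^k)^{j-1}$ in the polynomial approximation of Equation~(\ref{EqEigSys}). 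One small caution: your remark about absorbing initial factors with $p-2\le j-1$ into the constant would fail if some factor $p-j-1$ were zero (then $a_j^k=0$ identically from that prime on), but in the paper's regime $p_1 > J+1$ always holds, so this never occurs and the point is moot.
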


\begin{proof}
\begin{eqnarray*}
a_j^k & = & \prod_{q=p_1}^{p_k} \frac{q-j-1}{q-2} \\
 & = & \prod_{q=p_1}^{p_k} \left( \frac{q-j-1}{q-2} \cdot \frac{q-j}{q-j} \cdot \frac{q-j+1}{q-j+1} \cdots \frac{q-3}{q-3} \right) \\
 & = & \prod_{q=p_1}^{p_k} \left( \frac{q-j-1}{q-j} \frac{q-j}{q-j+1} \cdots \frac{q-3}{q-2} \right) \\
 & = & \prod_{q=p_1}^{p_k} \frac{q-(j-2)-3}{q-(j-2)-2} \cdot \prod_{q=p_1}^{p_k} \frac{q-(j-3)-3}{q-(j-3)-2} \cdots 
 \prod_{q=p_1}^{p_k} \frac{q-3}{q-2} \\
 & \approx & (a_2^k)^{j-1}
\end{eqnarray*}
\end{proof}

In fact from the proof we see that $a_j^k < (a_2^k)^{j-1}$, and the approximation of each factor to $\frac{q-3}{q-2}$
is closer for larger $q$ relative to $j$.

From Lemma~\ref{AjkLemma} we can approximate Equation~(\ref{EqDynSys}) by a polynomial in the one variable
$a_2^k$.  Observing that the first coordinates of the $R$'s are $1$'s of alternating sign, we obtain the following
polynomial approximation:
\begin{eqnarray}\label{EqEigSys}
w_{g,1}({\pml{p_k}}) & = &  (L_1 \wgp{g}{\pml{p_0}}) - a_2^k (L_2 \wgp{g}{\pml{p_0}}) \nonumber \\
 & & \gap + a_3^k (L_3 \wgp{g}{\pml{p_0}}) \cdots 
  + (-1)^{J+1} a_J^k (L_J \wgp{g}{\pml{p_0}}) \nonumber \\
  & \approx & (L_1 \wgp{g}{\pml{p_0}}) - a_2^k (L_2 \wgp{g}{\pml{p_0}})  \\
  & & \gap  + (a_2^k)^2 (L_3 \wgp{g}{\pml{p_0}}) \cdots 
  + (-1)^{J+1} (a_2^k)^{J-1} (L_J \wgp{g}{\pml{p_0}}) \nonumber
 \end{eqnarray}
 
For any particular gap $g$, if we have initial conditions $\wgp{g}{\pml{p_0}}$ for a $p_0$ such that 
$g < 2 p_1$, then we can use Equation~(\ref{EqEigSys}) to estimate the convergence of $w_{g,1}(p)$
to its asymptotic value.

\renewcommand\arraystretch{1.2}
\begin{table}
\begin{center}
\begin{tabular}{ll} \hline
\multicolumn{2}{|c|}{Values of $a_j^k$ at $p_k=999,999,999,989$ for $p_0 = 13$} \\ \hline
 &
$a_2^k = 0.10206751799779$ \\
 & $a_3^k = 0.01019996897567$ \\
$ a_j^k = \prod_{q=17}^{p_k} \frac{q-j-1}{q-2} $ \biggap & $a_4^k = 0.00099592269918$ \\
 & $a_5^k = 0.00009477093531$ \\
 & $a_6^k = 0.00000876214163 $ \\
 & $a_7^k = 0.00000078408120$ \\
 & $a_8^k =  0.00000006757562$ \\
 & $a_9^k =  0.00000000557284$ \\ \hline
\end{tabular}
\caption{\label{AjkTable} Calculated values of the eigenvalues $a_j^k$ up to $p_k \approx 10^{12}$.  
If we use initial conditions
from $\pgap(\pml{13})$, then $p_0 = 13$ and the products start with $p_1=17$. 
Observe that $a_j^k \approx (a_2^k)^{j-1}$. }
\end{center}
\end{table}
\renewcommand\arraystretch{1}

\subsection{Primorial $g=\pml{5}=30$.}
As an example, we see in Table~\ref{G13Table} that the primorial $g=\pml{5}=30$ eventually becomes more numerous 
as a gap in the sieve than the gap $g=\pml{3}=6$:  $w_{30,1}(\infty) = 8/3$ vs $w_{6,1}(\infty) = 2$.
When does the crossover happen?

For $p_0=13$ we have $w_{6,1}(\pml{13})=1690$ and $w_{30,1}(\pml{13})=0$.  Using the calculated values
in Table~\ref{AjkTable} for $a_j^k$ at $\hat{p} \approx 10^{12}$, we find that
 $$ w_{6,1}(\pml{\hat{p}}) \approx 1.912 \biggap {\rm and} \biggap
w_{30,1}(\pml{\hat{p}}) \approx 1.579.$$
Even in the cycle of gaps $\pgap(\pml{p})$ for $p \approx 10^{12}$ the gap $6$ is more numerous than 
the gap $30$, primarily due to the slow decay of $a_2^k$.

For what $p$ will $w_{30,1}(\pml{p}) > w_{6,1}({\pml{p}})$?  That is, when will the gap $30$ be
more numerous in Eratosthenes sieve than the gap $6$? We can use the data from Table~\ref{G13Table}
to obtain coefficients for the polynomial approximations in Equation~(\ref{EqEigSys}) for both $g=6$ and
$g=30$ respectively.  Taking the difference and solving for $a_2^k$, we discover that
$w_{30,1}({\pml{p}}) > w_{6,1}({\pml{p}})$, that is, that the gaps $30$ will finally be more numerous
in Eratosthenes sieve than the gaps $6$, when $a_2^k < 0.06275$.

For $p_0=13$, when $p_k \approx 10^{12}$ the parameter $a_2^k \approx 0.1$, and when
$p_k \approx 10^{15}$ the parameter $a_2^k \approx 0.08$.  The decay of $a_2^k$ is so slow 
that there will still be fewer gaps $30$ than gaps $6$ in Eratosthenes sieve when $p \approx 10^{15}$.

\section{Polignac's conjecture and\\  Hardy \& Littlewood's Conjecture B}\label{PolSection}
At this point, here is what we know about the population of a gap $g$ through the cycles of gaps
$\pgap(\pml{p})$ in Eratosthenes sieve.  We need $p_0$ such that conditions of Theorem~\ref{ThmGrowth}
hold, in particular the condition $g < 2 p_1$.  
Then we need $J$ such that no constellation of length $J+1$ has sum
equal to $g$.  This is the size of system we need to consider, to apply the dynamic system 
of Equation~(\ref{EqDynSys}) to $g$ and its driving terms.  

For a given $g$, once we have $p_0$ and $J$, 
we start with the cycle $\pgap(\pml{p_0})$ to obtain counts of driving terms for $g$
from length $1$ to $J$.  From these initial conditions $w_g(\pml{p_0})$, we can
apply the model directly or through its eigenstructure, to obtain the exact populations 
of $g$ and its driving terms through all further
stages of Eratosthenes sieve.

Our progress along this line of increasing $p_0$ and $J$ is complicated primarily by our
having to construct $\pgap(\pml{p_0})$.  This cycle of gaps contains $\phi(\pml{p_0})$ elements,
which grows unmanageably large.  If we have $\pgap(\pml{p_0})$, then for every gap $g < 2 p_1$ 
we can enumerate the driving terms of various 
lengths.  We take the maximum such length as $J$.

\begin{table}[b]
\renewcommand\arraystretch{0.8}
\begin{center}
\begin{tabular}{|r|rrrrrrr|rr|} \hline
\multicolumn{1}{|c}{gap} 
& \multicolumn{7}{c}{ $n_{g,j}(31)$: driving terms of length $j$ in $\pgap(\pml{31})$ }
 & & \\ [1 ex]
\multicolumn{1}{|c}{ } & $\lil 3$ & $\lil 4$ & $\lil 5$ & $\lil 6$ & $\lil 7$ & $\lil 8$ & $\lil 9$
 & $\lil \sum \Rat{j}{g}$ & $\lil \Rat{1}{g}(\infty)$ \\ \hline 
$\lil g = 74$ & $\lil 1$ & $\lil 1206$ & $\lil 70194$ & $\lil 1550662$
 & $\lil 17523160$ & $\lil 113497678$ & $\lil 445136490$ & $\lil 1$ & $\lil 1.02857$ \\
$\lil 76$ &  & $\lil 602$ & $\lil 32194$ & $\lil 765488$
  & $\lil 9470176$ & $\lil 68041280$ & $\lil 302507798$ & $\lil 1.0588$ & $\lil 1.0588$\\
$\lil 78$ & & $\lil 292$ & $\lil 26060$ & $\lil 826426$
   & $\lil 12166908$ & $\lil 99284264$ & $\lil 489040926$ & $\lil 2.1818$  & $\lil 2.1818$ \\
$\lil 80$ &  & $\lil 2$ & $\lil 2876$ & $\lil 139926$ & $\lil 2656274$
    & $\lil 26634332$ & $\lil 159280176$  & $\lil 1.3333$  & $\lil 1.3333$\\
$\lil 82$ &  &  & $\lil 747$ & $\lil 46878$ & $\lil 1066848$
     & $\lil 12378176$ & $\lil 83484438$ & $\lil 1$ & $\lil 1.0256$ \\
$\lil 84$ & & $\lil 2$ & $\lil 1012$ & $\lil 58216$ & $\lil 1485176$
      & $\lil 18772184$ & $\lil 135450260$ & $\lil 2.4$ & $\lil 2.4$ \\
$\lil 86$ & & & $\lil 74$ & $\lil 4726$ & $\lil 147779$
       & $\lil 2453256$ & $\lil 23265268$ & $\lil 1$ & $\lil 1.0244$ \\
$\lil 88$ &  & & $\lil 2$ & $\lil 2190$ & $\lil 107182$
        & $\lil 2025910$ & $\lil 20603366$ & $\lil 1.1111$ & $\lil 1.1111$ \\
$\lil 90$ & & $\lil 8$ & $\lil 300$ & $\lil 9360$ & $\lil 195708$
         & $\lil 2829548$ & $\lil 26983182$ & $\lil 2.6667$ & $\lil 2.6667$ \\
$\lil 92$ & & & $\lil 20$ & $\lil 860$ & $\lil 26854$
          & $\lil 488854$ & $\lil 5364068$ & $\lil 1.0476$ & $\lil 1.0476$ \\
$\lil 94$  & & & $\lil 16$ & $\lil 740$ & $\lil 19740$
           & $\lil 333162$ & $\lil 3684805$ & $\lil 1$ & $\lil 1.0222$ \\
$\lil 96$ & &  & $\lil 4$ & $\lil 242$ & $\lil 9636$
            & $\lil 249610$ & $\lil 3693782$ & $\lil 2$ & $\lil 2$ \\
$\lil 98$ &  &  & & $\lil 28$ & $\lil 1482$ & $\lil 52328$
             & $\lil 968210$ & $\lil 1.2$ & $\lil 1.2$ \\
$\lil 100$ & & & & $\lil 8$ & $\lil 672$
              & $\lil 26428$ & $\lil 567560$ & $\lil 1.3333$ & $\lil 1.3333$ \\
$\lil 102$ &  & &  & & $\lil 78$ & $\lil 7042$
               & $\lil 249300$ & $\lil 2.133$ & $\lil 2.133$ \\
$\lil 104$ &  &  &  &  & $\lil 182$ & $\lil 6086$
                & $\lil 129016$ & $\lil 1.0909$ & $\lil 1.0909$ \\
$\lil 106$ &  & & &  & $\lil 16$ & $\lil 1168$
                 & $\lil 37144$ & $\lil 1$ & $\lil 1.0196$ \\
$\lil 108$ &  &  &  &  & $\lil 8$ & $\lil 1244$
                  & $\lil 44334$ & $\lil 2$ & $\lil 2$ \\
$\lil 110$ & & & & &  & $\lil 142$
                   & $\lil 7686$ & $\lil 1.4815$ & $\lil 1.4815$ \\
$\lil 112$  &  &  &  & &  & $\lil 68$
                    & $\lil 5294$ & $\lil 1.2$ & $\lil 1.2$ \\
$\lil 114$ & & & & &  & $\lil 22$
                     & $\lil 2388$ & $\lil 2.1176$ & $\lil 2.1176$ \\
$\lil 116$ &  & &  & & & $\lil 224$
                      & $\lil 4716$ & $\lil 1.0370$ & $\lil 1.0370$ \\
$\lil 118$ &  &  &  &  &  &  & $\lil 72$
                      & $\lil 1$ & $\lil 1.0175$ \\
$\lil 120$ & &  &  &  &  &  & $\lil 1012$ 
                      & $\lil 2.6667$ & $\lil 2.6667$ \\
$\lil 122$ & &  & &  &  &  & $\lil 70$
                      & $\lil 1$ & $\lil 1.0169$ \\
$\lil 124$ & &  & &  &  &  & $\lil 28$
                      & $\lil 1.0345$ & $\lil 1.0345$ \\
$\lil 126$ & &  & &  &  &  & $\lil 4$
                      & $\lil 2.4$ & $\lil 2.4$ \\
$\lil 128$ & &  & &  &  &  & 
                      & $\lil 1$ & $\lil 1$ \\
$\lil 130$ & &  & &  &  &  & 
                      & $\lil 1.4545$ & $\lil 1.4545$ \\
$\lil 132$ & &  & &  &  &  & $\lil 2$
                      & $\lil 2.2222$ & $\lil 2.2222$ \\
\hline
 \end{tabular}
 \caption{ \label{G31Table} A sample of the population data for gaps $g$ and their driving terms
 in the cycle of gaps $\pgap(\pml{31})$.  This section of the table records the data where the driving
 terms of length $9$ are running out.  For the range of gaps displayed, there are no nonzero entries
 for $j=1, \; 2$.  The last two columns list for each gap the ratio of the sum of all the driving terms
 in $\pgap(\pml{31})$ to the population of the gap $g=2$, and the asymptotic ratio.}
 \end{center}
 \end{table}
\renewcommand\arraystretch{1}

We now introduce an alternate way to obtain initial conditions for any gap $g$,
sufficient to apply Lemma~\ref{AsymLemma}.
As an analogue to Polignac's conjecture, we show that for any even number $2n$, the
gap $g=2n$ or its driving terms occur at some stage of Eratosthenes sieve, and we show
that although we can't apply the complete dynamic system, we do have enough
information to get the asymptotic result from Lemma~\ref{AsymLemma}.

{\em Polignac's Conjecture}:  For any even number $2n$, there are infinitely many prime pairs
$p_j$ and $p_{j+1}$ such the difference $p_{j+1}-p_j = 2n$.

In Theorem~\ref{PolThm} below we establish an analogue of Polignac's conjecture for 
Eratosthenes sieve, that for any number $2n$ the gap $g=2n$ occurs infinitely often
in Eratosthenes sieve, and the ratio of occurrences of this gap to the gap $2$ approaches
the ratio implied by Hardy \& Littlewood's Conjecture B:
$$ w_{g,1}(\infty) = \lim_{p \rightarrow \infty} \frac{n_{g,1}(\pml{p})}{n_{2,1}(\pml{p})}
= \prod_{q>2, \; q|g} \frac{q-1}{q-2}.$$

To obtain this result, we first consider $\Z \bmod Q$ and its cycle of gaps $\pgap(Q)$,
in which $Q$ is the product of the prime divisors of $2n$.
We then bring this back into Eratosthenes sieve by filling in the primes
missing from $Q$ to obtain a primorial $\pml{p}$.

Once we are working with $\pgap(\pml{p})$, the condition $g < 2 p_{k+1}$
may still prevent us from applying Theorem~\ref{ThmGrowth}.  
However, we are able to show that we have enough information to 
apply Lemma~\ref{AsymLemma} under the construction we are using.

\subsection{General recursion on cycles of gaps}
We need to develop a more general form of the recursion on cycles of gaps, one
that applies to creating $\pgap(qN)$ from $\pgap(N)$ for any prime $q$ and
number $N$.  We also need a variant of Lemma~\ref{Lemma2p} that does
not require the condition $g < 2 p_{k+1}$.

Let $\pgap(N)$ denote the cycle of gaps among the generators in $\Z \bmod N$, with
the first gap being that between $1$ and the next generator.  There are
$\phi(N)$ gaps in $\pgap(N)$ that sum to $N$. In our work in the preceding sections, 
we focused on Eratosthenes sieve, in which $N=\pml{p}$, the primorials. 

There is a one-to-one correspondence between generators of $\Z \bmod N$ and the gaps in 
$\pgap(N)$.  Let
$$\pgap(N) = g_1 \; g_2 \; \ldots g_{\phi(N)}.$$
Then for $k < \phi(N)$, $g_k$ corresponds to the generator $\gamma = 1+\sum_{j=1}^{k} g_j$, and since
$\sum_{j=1}^{\phi(N)} = N$, the generator $1$ corresponds to $g_{\phi(N)}$.  Moreover, since
$1$ and $N-1$ are always generators, $g_{\phi(N)}=2$.  For any generator $\gamma$, $N-\gamma$ 
is also a generator, which implies that except for the final $2$, $\pgap(N)$ is symmetric.
As a convention, we write the cycles with the first gap being from $1$ to the next generator. 

We build $\pgap(N)$ for any $N$ by introducing 
one prime factor at a time.

\begin{lemma}\label{LemmaCycle}
Given $\pgap(N)$, for a prime $q$ we construct $\pgap(qN)$ as follows:
\begin{enumerate}
\item[a)] if $q \mid N$, then we concatenate $q$ copies of $N$,
$$ \pgap(qN) = \underbrace{\pgap(N) \cdots \pgap(N)}_{q \gap {\rm copies}}$$
\item[b)] if $q \not| N$, then we build $\pgap(qN)$ in three steps:
\begin{enumerate}
\item[R1] Concatenate $q$ copies of $\pgap(N)$;
\item[R2] Close at $q$;
\item[R3] Close as indicated by the element-wise product $q * \pgap(N)$.
\end{enumerate}
\end{enumerate}
\end{lemma}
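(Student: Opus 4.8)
The plan is to prove both cases by matching the cycle described in the lemma against the actual generators (units) of $\Z \bmod qN$, using the generator--gap correspondence recalled just above the statement (and, for part (b), the elementary periodicity that underlies the Chinese Remainder Theorem of Lemma~\ref{DelLemma}). Throughout I write the generators of $\Z \bmod N$ in $[1,N]$ as $1 = \gamma_0 < \gamma_1 < \cdots < \gamma_{\phi(N)-1}$, so that $\pgap(N) = g_1 g_2 \cdots g_{\phi(N)}$ with $g_i = \gamma_i - \gamma_{i-1}$ for $i < \phi(N)$ and $g_{\phi(N)} = N+1-\gamma_{\phi(N)-1}$.

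Part (a) is purely multiplicative. If $q \mid N$, then $qN$ and $N$ have the same set of prime divisors, so $\gcd(m,qN)=1$ if and only if $\gcd(m,N)=1$. Since the residues coprime to $N$ repeat with period $N$, the generators of $\Z \bmod qN$ lying in $[1,qN]$ are exactly the $q$ successive translates $\{\gamma_i + tN : 0 \le i < \phi(N)\}$ for $t = 0,\dots,q-1$; reading off consecutive differences gives precisely $q$ concatenated copies of $\pgap(N)$, and the one-line identity $\phi(qN)=q\,\phi(N)$ (valid when $q\mid N$) checks that the gap count agrees. No appeal to the Chinese Remainder Theorem is needed in this case.

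For part (b) I would argue in the order R1, R2, R3. Since $\gcd(q,N)=1$, concatenating $q$ copies of $\pgap(N)$ in R1 lists exactly the integers of $[1,qN]$ coprime to $N$ (periodicity of the units mod $N$ again), with the cycle reaching $qN+1$; the generators of $\Z \bmod qN$ are obtained from this list by further deleting those members also divisible by $q$, and deleting a candidate is precisely the closure (merging) of its two flanking gaps. The candidates divisible by $q$ are $q\gamma_0 < q\gamma_1 < \cdots < q\gamma_{\phi(N)-1}$ --- indeed $m=q\ell \le qN$ forces $\ell \le N$, and $\gcd(q\ell,N)=1$ iff $\gcd(\ell,N)=1$ because $\gcd(q,N)=1$ --- a set of size $\phi(N)$ whose least element is $q\gamma_0 = q$, which is the closure performed in R2. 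Their successive differences $q\gamma_i - q\gamma_{i-1} = q\,g_i$ are exactly the entries of the element-wise product $q*\pgap(N)$, so, starting from the position vacated by $q$ and stepping forward through the entries of $q*\pgap(N)$, one reaches each remaining multiple of $q$ in turn; after $\phi(N)$ steps the running total is $q\sum_i g_i = qN$, which returns to $q$ and certifies that the multiples are exhausted. This is the content of R3, and it is the exact analogue of step R3 of Lemma~\ref{RecursLemma}, now without the hypothesis $g<2p_{k+1}$ and without the ``next prime'' property, which in Lemma~\ref{RecursLemma} forced $q=\gamma_1$ and so placed the $q$-closure at the very start of the concatenated cycle.

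The step I expect to require the most care, and which I would treat as the heart of the argument, is the interaction of R2 and R3 when consecutive candidates are deleted --- for instance $q=2$ with $N$ odd, where two adjacent even candidates vanish together. I would handle this by observing that closure of gaps is associative: a run of $r$ consecutive deletions merges $r+1$ consecutive gaps into one, and the running-sum formulation of R3 (identical to step R3 of Lemma~\ref{RecursLemma}) accumulates exactly such runs on its own, so every gap of $\pgap(qN)$ is produced exactly once and no generator is counted twice. A smaller point to record is that ``close at $q$'' is unambiguous: $q$ is coprime to $N$ and $q \le qN$, so $q$ occurs exactly once among the concatenated candidates, picking out a single well-defined adjacent pair of gaps. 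With these two points settled, the constructed cycle coincides with $\pgap(qN)$, which completes the proof.
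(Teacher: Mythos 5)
Your proof is correct and follows essentially the same route as the paper's: part (a) from the periodicity of the units modulo $N$ when $q\mid N$, and part (b) by observing that R1 lists the candidates coprime to $N$, that the candidates still to be deleted are exactly $q\gamma$ for the generators $\gamma$ of $\Z \bmod N$ (the smallest being $q$ itself), and that their successive differences $q\,g_i$ give the running-sum bookkeeping of R3. Your explicit handling of runs of adjacent deletions (e.g.\ $q=2$) via associativity of closures is a detail the paper leaves implicit in its running-sum formulation, but it is the same argument.
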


\begin{proof}
A number $\gamma$ in $\Z \bmod N$ is a generator iff $\gcd(\gamma,N)=1$. 
\begin{itemize}
\item[a)] 
Assume $q|N$.  Since $\gcd(\gamma,N)=1$, we know that $q \not| \gamma$.

For $j=0,1,\ldots,q-1$, we have 
$$\gcd(\gamma+jN, qN)= \gcd(\gamma,qN) = \gcd(\gamma,N)=1.$$
Thus $\gcd(\gamma,N)=1$ iff $\gcd(\gamma+jN,qN)=1$, and so the generators of $\Z \bmod qN$ have
the form $\gamma+jN$, and the gaps take the indicated form.
\item[b)] 
If $q \not| N$ then we first create a set of candidate generators for $\Z \bmod qN$, by
considering the set 
$$\set{\gamma+jN \st \gcd(\gamma,N)=1, \gap j=0,\ldots, q-1}.$$
For gaps, this is the equivalent of step R1, concatenating $q$ copies of $\pgap(N)$.
The only prime divisor we have not accounted for is $q$; if $\gcd(\gamma+jN,q)=1$, then this
candidate $\gamma+jN$ is a generator of $\Z \bmod qN$.  So we have to remove $q$ and its
multiples from among the candidates.

We first close the gaps at $q$ itself.  We index the gaps in the $q$ concatenated copies of
$\pgap(N)$:
$$ g_1 g_2 \ldots g_{\phi(N)} \ldots g_{q\cdot \phi(N)}.$$
Recalling that the first gap $g_1$ is the gap between the generator $1$ and the next smallest
generator in $\Z \bmod N$, the candidate generators are the 
running totals $\gamma_j = 1+\sum_{i=1}^{j-1} g_i$.  We take the $j$ for which $\gamma_j = q$, 
and removing $q$ from the list of candidate generators corresponds to replacing the gaps
$g_{j-1}$ and $g_j$ with the sum $g_{j-1}+g_j$.  This completes step R2 in the construction.

To remove the remaining multiples of $q$ from among the candidate generators, we note
that any multiples of $q$ that share a prime factor with $N$ have already been removed.
We need only consider multiples of $q$ that are relatively prime to $N$; that is, we only need
to remove $q\gamma_j$ for each generator $\gamma_j$ of $\Z \bmod N$ by closing the
corresponding gaps.

We can perform these closures by working directly with the cycle of gaps $\pgap(N)$.  Since
$q\gamma_{i+1}-q\gamma_{i} = qg_{i}$, we can go from one closure to the next by tallying the
running sum from the current closure until that running sum equals $qg_{i}$.
Technically, we create a series of indices beginning with $i_0=j$ such that $\gamma_j=q$,
and thereafter $i_k=j$ for which $\gamma_j-\gamma_{i_{k-1}}=q\cdot g_k$.  To cover the cycle
of gaps under construction, which consists initially of $q$ copies of $\pgap(N)$, $k$ runs
only from $0$ to $\phi(N)$.  We note that the last interval wraps around the end of the cycle
and back to $i_0$:  $i_{\phi(N)}=i_0$.
\end{itemize}
\end{proof}

\begin{theorem}\label{ThmqN}
In step R3 of Lemma~\ref{LemmaCycle}, each possible closure in $\pgap(N)$ occurs exactly once
in constructing $\pgap(qN)$.
\end{theorem}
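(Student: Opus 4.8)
The plan is to reprise the Chinese-Remainder-Theorem argument of Lemma~\ref{DelLemma}, with the single coprimality $\gcd(q,N)=1$ now playing the role that $\gcd(\pml{p_k},p_{k+1})=1$ played there. First I would pin down the dictionary between closures and generators. A \emph{possible closure in $\pgap(N)$} is a choice of two cyclically adjacent gaps $g_{j-1},g_j$; via the running sums $\gamma_j=1+\sum_{i=1}^{j-1}g_i$ this is the same as a choice of one of the $\phi(N)$ generators $\gamma$ of $\Z\bmod N$, namely the candidate generator sitting between those two gaps. So there are exactly $\phi(N)$ possible closures.

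Next I would follow what steps R1--R3 of Lemma~\ref{LemmaCycle}(b) do to a fixed possible closure, i.e.\ to a fixed generator $\gamma$ of $\Z\bmod N$. After the concatenation R1, the candidate generators of $\Z\bmod qN$ are exactly $\{\gamma+iN : \gcd(\gamma,N)=1,\ i=0,\ldots,q-1\}$, so each generator $\gamma$ of $\Z\bmod N$ has $q$ lifts and, correspondingly, each possible closure appears in $q$ copies across the concatenated cycle. The closures actually carried out in R2 and R3 are precisely those that delete a multiple of $q$ coprime to $N$ --- multiples of $q$ sharing a factor with $N$ were never candidates after R1 --- and these deleted numbers are exactly $q\delta$ as $\delta$ runs over the generators of $\Z\bmod N$ in $[1,N]$; here $q\cdot\gamma_0=q$ is handled by R2 as the anchor $i_0$, and the remaining $q\gamma_1,\ldots,q\gamma_{\phi(N)-1}$ are reached in turn by the running-sum stepping of R3 (the final step $q\gamma_{\phi(N)}=qN+q\equiv q$ wrapping back to $i_0$, so no closure is repeated).

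The crux is then a counting/bijection statement. For a fixed generator $\gamma$ of $\Z\bmod N$, among its $q$ lifts $\gamma+iN$, $i=0,\ldots,q-1$, exactly one is divisible by $q$, because $\gcd(q,N)=1$ makes $i\equiv -\gamma N^{-1}\pmod q$ uniquely solvable in $\{0,\ldots,q-1\}$; equivalently, multiplication by $q$ permutes $(\Z/N)^\times$, so each generator $\delta$ of $\Z\bmod N$ is $q\gamma\bmod N$ for exactly one generator $\gamma$. Hence exactly one of the $q$ copies of the closure attached to $\delta$ is performed, and conversely every performed closure deletes some $q\gamma$ and is therefore a copy of the closure attached to $q\gamma\bmod N$. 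Letting $\gamma$ (equivalently $\delta$) range over all $\phi(N)$ generators shows the performed closures are in bijection with the possible closures of $\pgap(N)$, each hit exactly once. Specializing to $N=\pml{p_k}$, $q=p_{k+1}$ recovers Lemma~\ref{DelLemma}.

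The main obstacle is bookkeeping rather than number theory: one must verify that every multiple of $q$ below $qN$ surviving R1 really has the form $q\gamma$ with $\gamma\in(\Z/N)^\times$ (so that R3 produces no spurious closures), and one must reconcile the single closure in R2 with the $\phi(N)-1$ closures generated by R3's running-sum recursion so that together they realize all $\phi(N)$ possible closures exactly once --- in particular checking that the wrap-around identification $i_{\phi(N)}=i_0$ does not double-count the closure that deletes the candidate $q$ itself.
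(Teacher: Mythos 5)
Your proposal is correct and follows essentially the same route as the paper: identify each possible closure with a generator $\gamma$ of $\Z \bmod N$, observe that its $q$ lifts $\gamma + iN$ hit each residue class mod $q$ exactly once because $\gcd(q,N)=1$, and conclude that exactly one lift is removed. The extra care you take with the converse direction (that every performed closure is a copy of some possible closure, via multiplication by $q$ permuting $(\Z/N)^\times$) and with the R2/R3 wrap-around bookkeeping goes slightly beyond what the paper writes down, but it is the same Chinese Remainder Theorem argument.
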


\begin{proof}
This is again a translation of the Chinese Remainder Theorem into this setting.

Consider each gap $g$ in $\pgap(N)$.  Since $q \not| N$, $N \bmod q \neq 0$.
Under step R1 of the construction, $g$ has $q$ images.  Let the generator corresponding
to $g$ be $\gamma$.  Then the generators corresponding to the images of $g$ under step
R1 is the set:
$$\left\{ \gamma+jN \st j=0,\ldots,q-1\right\}.$$
Since $N \bmod q \neq 0$, there is exactly one $j$ for which $(\gamma+jN) \bmod q = 0$.
For this gap $g$, a closure in R2 and R3 occurs once and only once, at the image corresponding
to the indicated value of $j$.
\end{proof}

\begin{figure}[t]
\centering
\includegraphics[width=5in]{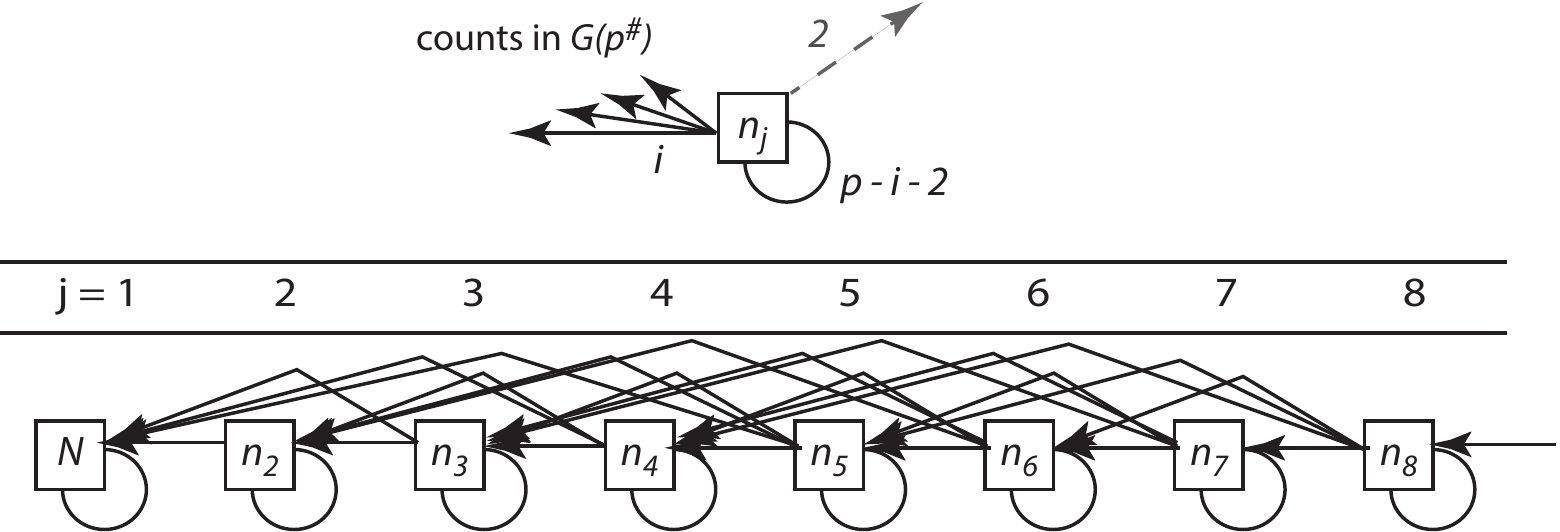}
\caption{\label{GenSystemFig} In the general dynamic system, when the 
condition $g < 2 p_{k+1}$ may not be satisfied, the interior closures may not
occur in distinct copies of the constellation.  However, the two exterior closures
still remove two copies from being driving terms for $g$.  The other $n_j-2$ copies
remain as driving terms, but we cannot specify their lengths. }
\end{figure}

\begin{corollary}\label{qNCor}
Let $g$ be a gap.  If for the prime $q$, $q \not| g$, then 
$$ \sum \Rat{j}{g}(qN) = \sum \Rat{j}{g}(N).$$
\end{corollary}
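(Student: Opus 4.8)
The plan is to collapse both sides onto a single combinatorial count. Write $T_g(M):=\sum_j n_{g,j}(M)$ for the total number of driving terms of all lengths for $g$ in $\pgap(M)$, so that $\sum_j \Rat{j}{g}(M)=T_g(M)/n_{2,1}(M)$. As holds in every application of this corollary, take $N$ even (so all gaps are at least $2$) and $g<N$; since $g$ is even while $q\nmid g$, the prime $q$ is then odd. A run of consecutive gaps summing to $2$ must be a single gap $2$, so $n_{2,1}(M)=T_2(M)$, and it suffices to prove that $T_g(qN)=(q-2)\,T_g(N)$ and $T_2(qN)=(q-2)\,T_2(N)$; the corollary follows by dividing.

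Both scaling identities come from the general recursion of Lemma~\ref{LemmaCycle}(b) together with Theorem~\ref{ThmqN}. Since $g<N$, an occurrence of a driving term for $g$ in $\pgap(N)$ is an arc of consecutive gaps summing to $g$, determined by its left endpoint --- a generator $\gamma$ of $\Z\bmod N$ --- with right endpoint the generator $\gamma+g$ (read cyclically). Step R1 produces $q$ copies, with endpoints $\gamma+mN$ and $\gamma+g+mN$ for $m=0,\dots,q-1$. By Theorem~\ref{ThmqN} the left endpoint is closed in the unique copy with $\gamma+mN\equiv 0\pmod q$ and the right endpoint in the unique copy with $\gamma+g+mN\equiv 0\pmod q$; because $q\nmid g$ these two congruences have distinct solutions, so the two exterior closures fall in two distinct copies. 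Those two copies are no longer driving terms for $g$ (their sum has grown), while each of the remaining $q-2$ copies survives as a driving term for $g$: an interior closure only shortens the arc and leaves its sum unchanged. Conversely, every arc summing to $g$ in $\pgap(qN)$ has both endpoints coprime to $qN$, hence to $N$, with difference $g$, so it reduces modulo $N$ to an arc summing to $g$ in $\pgap(N)$ sitting in one of these surviving copies; this correspondence is a bijection, so no new driving terms for $g$ appear and $T_g(qN)=(q-2)\,T_g(N)$. The identical argument with $g=2$ --- where the lone gap $2$ again has its two exterior closures land in distinct copies since $q\nmid 2$ --- gives $T_2(qN)=n_{2,1}(qN)=(q-2)\,n_{2,1}(N)$, in agreement with Equation~(\ref{Eq2System}). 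Combining, $\sum_j \Rat{j}{g}(qN)=T_g(qN)/T_2(qN)=(q-2)T_g(N)/\big((q-2)T_2(N)\big)=T_g(N)/T_2(N)=\sum_j \Rat{j}{g}(N)$. The same conclusion reads off even faster from the Chinese Remainder Theorem applied to residues: $T_g(M)=\#\{\gamma\bmod M:\gcd(\gamma,M)=\gcd(\gamma+g,M)=1\}$ is multiplicative in coprime factors of $M$, and $T_g(q)=q-2$ because exactly the two residues $0$ and $-g$ are forbidden modulo $q$, these being distinct precisely when $q\nmid g$.

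The only place care is required is the bookkeeping matching closures to copies. The crux is that the left- and right-exterior closures of a sum-$g$ arc land in distinct copies: this is exactly the force of the hypothesis $q\nmid g$, and is precisely what separates this corollary from the subtler situation of Figure~\ref{GenSystemFig}, where for $q\mid g$ the two exterior closures can coincide. One must also verify that reducing a sum-$g$ arc of $\pgap(qN)$ modulo $N$ is a genuine bijection onto the surviving copies of sum-$g$ arcs of $\pgap(N)$, so that no extra driving terms are created in the larger cycle. Finally, $q\nmid g$ together with $g$ even forces $q$ odd, which is what lets $n_{2,1}$ scale by the same factor $q-2$; for odd $g$ the prime $q=2$ behaves differently (no odd gap survives once $2$ enters the modulus), so this parity observation, though immediate, cannot be skipped.
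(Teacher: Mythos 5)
Your core argument for the main case is the paper's own: create $q$ images of each driving term, observe that interior closures merely shorten a driving term while only the two exterior closures disqualify one, and use $q\nmid g$ to place those two exterior closures in distinct images, so exactly $q-2$ of the $q$ images survive --- the same factor by which $n_{2,1}$ grows, so the ratios cancel. Two of your additions genuinely sharpen the paper's write-up: you check that reduction mod $N$ is a bijection, so no driving terms for $g$ appear in $\pgap(qN)$ other than the surviving images (the paper asserts $\sum_j n_{g,j}(qN)=(q-2)\sum_j n_{g,j}(N)$ without addressing surjectivity), and your closing count of residues $\gamma$ with $\gcd(\gamma,M)=\gcd(\gamma+g,M)=1$ via the Chinese Remainder Theorem is a clean independent derivation of the same totals.

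Two points need repair. First, the corollary does not assume $q\nmid N$, and the paper's proof devotes its first case to $q\mid N$: there the construction is pure concatenation (Lemma~\ref{LemmaCycle}(a)), every count is multiplied by $q$ rather than $q-2$, and your asserted identity $T_g(qN)=(q-2)T_g(N)$ is false in that case --- though the ratio conclusion still holds because $n_{2,1}$ also picks up the factor $q$. You must split into the two cases (your CRT aside would in fact produce the factor $q$ automatically if carried out for non-squarefree moduli). Second, your blanket hypothesis $g<N$, justified by ``as holds in every application,'' is not true of the applications: in Lemma~\ref{QLem} the corollary is applied with $N=Q_{i-1}$ a product of a few small primes and $g=2n$ typically much larger, so a driving term wraps around the cycle $\pgap(Q_{i-1})$ several times. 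Your endpoint bookkeeping actually survives this --- an arc summing to $g$ is still indexed by its left endpoint $\gamma$, its right endpoint is $\gamma+g$ read cyclically, and the two congruences $\gamma+mN\equiv 0$ and $\gamma+g+mN\equiv 0 \pmod q$ still have distinct solutions in $m$ because $q\nmid g$ --- but you should drop the restriction rather than lean on a claim about the applications that fails for the corollary's principal use.
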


\begin{proof}
Consider a driving term $s$ for $g$, of length $j$ in $\pgap(N)$.  
In constructing $\pgap(qN)$, we initially create $q$ copies of $s$.

If $q | N$, then the construction is complete.  For each driving term for $g$
in $\pgap(N)$ we have $q$ copies, and so
$ n_{g,j}(qN) = q  \cdot n_{g,j}(N).$  However, we also have $q$ copies of
every gap $2$ in $\pgap(N)$, 
 $n_{2,1}(qN) = q \cdot n_{2,1}(N)$.
Thus $\Rat{j}{g}(qN) = \Rat{j}{g}(N)$, and we have equality for each length $j$, and 
so the result about the sum is immediate.

If $q \not| N$, then in step R1 we create $q$ copies of $s$.
In steps R2 and R3, each of the possible closures in $s$ occurs once, distributed among the
$q$ copies of $s$.  The $j-1$ closures interior to $s$ change the lengths of some of the driving
terms but don't change the sum, and the result
is still a driving term for $g$.  Only the two exterior closures, one at each end of $s$, change
the sum and thereby remove the copy from being a driving term for $g$.  Since $q \not| g$, 
these two exterior closures occur in separate copies of $s$.  See Figure~\ref{GenSystemFig}.

If the condition $g < 2p_{k+1}$ applies, then each of the closures occur in a separate copy of 
$s$, and we can use the full dynamic system of Theorem~\ref{ThmGrowth}.
For the current result we do not know that the closures necessarily occur in distinct copies of 
$s$, and so we can't be certain of the lengths of the resulting constellations.

However, we do know that of the $q$ copies of $s$, two are eliminated as driving terms and
$q-2$ remain as driving terms of various lengths.  
$$\sum_j n_{g,j}(qN) = (q-2) \sum_j n_{g,j}(N).$$
Since $n_{2,1}(qN) = (q-2)n_{2,1}(N)$, the ratios are preserved
$$\sum_j \Rat{j}{g}(qN) = \sum_j \Rat{j}{g}(N).$$
\end{proof}

As an example of this approach, we display in Table~\ref{G31Table} a sample of the enumerations 
of driving terms in $\pgap(\pml{31})$.

By combining the preceding Corollary~\ref{qNCor} with Lemma~\ref{AsymLemma},
we immediately obtain the following result, that for any gap $g$, if we look at its largest prime
factor $\bar{q}$, then we can calculate the asymptotic ratios from $\pgap(\pml{\bar{q}})$.

\begin{corollary}\label{InfCor}
Let $g=2n$ be a gap, and let $\bar{q}$ be the largest prime factor of $g$.  Then
$$ \Rat{1}{g}(\infty) = \sum \Rat{j}{g}(\pml{\bar{q}}).$$
\end{corollary}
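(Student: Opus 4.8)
The plan is to combine the two results the excerpt has already built toward. Let $g=2n$ with largest prime factor $\bar q$, and let $\pml{\bar q}$ be the primorial through $\bar q$. Write $\pml{\bar q} = q_1 q_2 \cdots q_m$ where $q_m = \bar q$ and the $q_i$ run over all primes up to $\bar q$. The asymptotic ratio we want is obtained by climbing from $\pgap(\pml{\bar q})$ up through all larger primes $q_{m+1}, q_{m+2}, \ldots$, and at every one of these stages the prime $q$ being introduced satisfies $q > \bar q \ge$ every prime factor of $g$, so in particular $q \not\mid g$. Hence Corollary~\ref{qNCor} applies at each such stage.

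First I would invoke Corollary~\ref{qNCor} repeatedly: since each prime $q > \bar q$ does not divide $g$, we get
\[
\sum_j \Rat{j}{g}(\pml{q_{m+1}}) = \sum_j \Rat{j}{g}(\pml{\bar q}),
\]
and inductively $\sum_j \Rat{j}{g}(\pml{p}) = \sum_j \Rat{j}{g}(\pml{\bar q})$ for every prime $p \ge \bar q$. So the sum of the ratios of $g$ and all its driving terms is already frozen at its value in $\pgap(\pml{\bar q})$. Next I would apply Lemma~\ref{AsymLemma} with the initial prime taken to be some $p_0 \ge \bar q$ large enough that $g < 2 p_1$ (any prime $p_0$ with $2 p_0 > g$ works, and such a prime $\ge \bar q$ exists), using as initial data $\wgp{g}{\pml{p_0}}$. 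Lemma~\ref{AsymLemma} then gives $\Rat{1}{g}(\infty) = \sum_j \Rat{j}{g}(\pml{p_0})$. Chaining this with the frozen-sum identity yields $\Rat{1}{g}(\infty) = \sum_j \Rat{j}{g}(\pml{\bar q})$, which is the claim.

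The one point requiring a little care — and the likely main obstacle — is the mismatch between the hypotheses of the two tools: Lemma~\ref{AsymLemma} needs an initial prime $p_0$ with $g < 2p_1$, whereas $\pml{\bar q}$ itself need not satisfy $g < 2\bar q$ (indeed $g$ can be much larger than $\bar q$, e.g. $g = 2\bar q$). The resolution is precisely Corollary~\ref{qNCor}: it lets us transport the summed ratio from $\pgap(\pml{\bar q})$ up to $\pgap(\pml{p_0})$ for a suitably large $p_0$ at no cost, since every intervening prime fails to divide $g$. I would also note for completeness that this same chain of equalities shows the sum $\sum_j \Rat{j}{g}$ is constant for all stages from $\pml{\bar q}$ onward, so it does not matter which admissible $p_0 \ge \bar q$ is chosen — the value is already determined by $\pgap(\pml{\bar q})$. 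Finally, one should check the base case $\bar q = 2$ (i.e. $g$ a power of $2$) separately or observe it is covered: then $\pml{\bar q} = \pml{2} = 2$, the only gap in $\pgap(2)$ is $g=1$, and $g=2^a$ with $a\ge 1$ acquires its driving terms only at later stages, where Corollary~\ref{qNCor} again applies at every prime $q \ge 3$ since such $q \nmid g$; so the identity still reads correctly with the convention that $\sum_j \Rat{j}{g}(\pml{2})$ counts the (possibly zero) driving terms present there.
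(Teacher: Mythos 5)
Your proposal is correct and follows essentially the same route as the paper's own proof: both use Corollary~\ref{qNCor} to show the summed ratio is frozen for all primes beyond $\bar{q}$, advance through further stages until the hypothesis $g < 2p_1$ of Lemma~\ref{AsymLemma} holds, and then apply that lemma. Your explicit attention to the hypothesis mismatch and the $\bar{q}=2$ case is a slight elaboration of what the paper dismisses as a ``formality,'' but the argument is the same.
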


\begin{proof}
For all primes $p > \bar{q}$, by Corollary~\ref{qNCor}
$$ \sum \Rat{j}{g}(\pml{p}) = \sum \Rat{j}{g}(\pml{\bar{q}}),$$
so once we reach $\pgap(\pml{\bar{q}})$, we continue through additional stages of the sieve
if necessary until the condition $g < 2p_1$ is satisfied, but the ratios remain unchanged during
this formality.  So the result from Lemma~\ref{AsymLemma} can be obtained from the
ratios determined in $\pgap(\pml{\bar{q}})$.
\end{proof}

\subsection{Polignac's conjecture for Eratosthenes sieve}
We establish an equivalent of Polignac's conjecture for Eratosthenes sieve.

\begin{theorem}\label{PolThm}
For every $n>0$, the gap $g=2n$ occurs infinitely often in Eratosthenes sieve, and the
ratio of the number of occurrences of $g=2n$ to the number of $2$'s converges asymptotically
to 
$$ \Rat{1}{2n}(\infty) = \prod_{q>2, \; q|n} \frac{q-1}{q-2}.
$$
\end{theorem}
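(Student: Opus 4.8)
The plan is to combine the machinery already built: Corollary~\ref{InfCor} reduces the asymptotic ratio to a finite computation in $\pgap(\pml{\bar q})$, where $\bar q$ is the largest prime factor of $g=2n$, and the main remaining task is to evaluate $\sum_j \Rat{j}{g}(\pml{\bar q})$ explicitly and show it equals the Hardy--Littlewood product $\prod_{q>2,\,q\mid n}\frac{q-1}{q-2}$. First I would handle the two degenerate but important cases: if $n$ is a power of $2$ (so $g$ has no odd prime factor), the product is empty, equal to $1$, and I must show $\Rat{1}{g}(\infty)=1$; and if $n$ has an odd prime factor, I need the full product. In either case the key is to count driving terms in a cycle of gaps whose modulus contains exactly the prime factors of $g$.

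The cleanest route is to avoid the primorial $\pml{\bar q}$ at first and instead work with $\pgap(Q)$, where $Q=\prod_{q\mid 2n}q$ is the radical of $2n$ (this is the construction flagged in the paragraph before the general recursion). By Theorem~\ref{ThmqN} and Corollary~\ref{qNCor}, $\sum_j \Rat{j}{g}(\pml{\bar q})=\sum_j \Rat{j}{g}(Q)$, since filling in the missing primes below $\bar q$ only multiplies both numerator and denominator by the same factors (a prime $q'\nmid g$ gives factor $q'-2$, a prime $q'\mid g$ already present gives factor $q'$ via case~(a) of Lemma~\ref{LemmaCycle}). So it suffices to compute $\sum_j \Rat{j}{g}(Q)=\frac{\sum_j n_{g,j}(Q)}{n_{2,1}(Q)}$, i.e., the total number of driving terms (constellations of any length) summing to $g$ in $\pgap(Q)$, normalized by the number of gaps $2$. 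Here $g=2n$ itself is an element of $\Z\bmod Q$ with $Q\mid g$ is false in general --- rather $Q\mid 2n$ in the sense that every prime factor of $Q$ divides $2n$, so I should build $\pgap(Q)$ one prime at a time via Lemma~\ref{LemmaCycle}, tracking how the count $\sum_j n_{g,j}$ and the count $n_{2,1}$ evolve, and show the ratio telescopes into $\prod_{q>2,\,q\mid n}\frac{q-1}{q-2}$.

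Concretely, I would induct on the primes dividing $2n$ in increasing order $2=q_1<q_2<\cdots<q_m=\bar q$. Start with $\pgap(2)$ or $\pgap(6)$ as a base, where $g=2n$ together with its driving terms can be counted directly against $n_{2,1}$. At each step of adjoining a new odd prime $q_i\mid n$ to the current modulus $N$ (with $q_i\nmid N$), apply case~(b) of Lemma~\ref{LemmaCycle}: Theorem~\ref{ThmqN} says each closure in $\pgap(N)$ occurs exactly once, so a driving term $s$ of sum $g$ spawns $q_i$ copies, of which the two exterior closures are destroyed and $q_i-1$ survive --- wait, but one of the ``closures'' is the R2 closure at $q_i$ itself, which is not an exterior closure of our particular $s$ unless $s$ happens to straddle $q_i$; generically $s$ gets $q_i$ copies from R1, loses $2$ from its own exterior closures in R3, keeping $q_i-2$ as driving terms (this is exactly the computation in the proof of Corollary~\ref{qNCor}, and since $q_i\mid g$ is false --- no wait, $q_i\mid n$ so $q_i\mid 2n=g$, so the two exterior closures could coincide!). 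This is the delicate point and the main obstacle: when $q_i\mid g$, the two exterior closures of $s$ (which are at distance $g$ apart in terms of generator values) can land in the same copy of $s$, so we lose only $1$ copy, not $2$. Meanwhile $n_{2,1}$ grows by exactly $q_i-2$ at each step (from Equation~\ref{Eq2System}-style reasoning, generalized). So the ratio $\sum_j n_{g,j}/n_{2,1}$ gets multiplied by $\frac{q_i-1}{q_i-2}$ precisely when $q_i\mid g$, and by $\frac{q_i-2}{q_i-2}=1$ when $q_i\nmid g$ --- which reproduces the claimed product. The crux is therefore a careful Chinese-Remainder-Theorem argument: when $q_i\mid 2n$, among the $q_i$ images $\gamma+jN$ of the left endpoint of $s$, show that exactly one value of $j$ makes both $\gamma+jN\equiv 0$ and $\gamma+g+jN\equiv 0 \pmod{q_i}$ (using $g\equiv 0\pmod{q_i}$), so the two exterior closures coincide in one copy, destroying only that one copy and leaving $q_i-1$ driving terms; and when $q_i\nmid 2n$, no single $j$ kills both ends, so two distinct copies are destroyed, leaving $q_i-2$. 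I would finish by noting that once every prime factor of $2n$ has been adjoined we are in $\pgap(Q)$, the ratio equals $\prod_{q>2,\,q\mid n}\frac{q-1}{q-2}$, this is $\ge 1>0$ so driving terms exist, and by Corollary~\ref{InfCor} plus Lemma~\ref{AsymLemma} the gap $g=2n$ itself occurs in $\pgap(\pml p)$ for all large $p$ with population growing by factors $p-2$, hence infinitely often, and $\Rat{1}{2n}(\infty)$ equals this product.
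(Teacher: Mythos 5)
Your proposal is correct, and it reaches the same reduction as the paper (Corollary~\ref{qNCor} / Corollary~\ref{InfCor} plus Lemma~\ref{AsymLemma} for the asymptotics), but it establishes the key count $\sum_j \Rat{j}{g}$ by a genuinely different route. The paper's Lemma~\ref{QLem} never touches the case of adjoining a prime that divides $g$: it constructs $\pgap(Q)$ for the radical $Q$ of $2n$ in one shot, observes that every one of the $\phi(Q)$ generators begins a driving term consisting of $2n/Q$ full trips around the cycle (so the total driving-term count in $\pgap(Q)$ is exactly $\phi(Q)$), and then only ever applies the recursion with primes $p\nmid g$, where Corollary~\ref{qNCor} gives the clean factor $p-2$. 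You instead build the modulus one prime at a time from $\pgap(2)$, which forces you to prove an extension of Corollary~\ref{qNCor} that the paper does not state: when $q\mid g$ and $q\nmid N$, each driving term loses only \emph{one} of its $q$ copies, because the two exterior closures land in the same copy (the endpoints $\gamma$ and $\gamma+g$ are congruent mod $q$, so by CRT exactly one $j$ kills both), leaving $q-1$ survivors against the factor $q-2$ for $n_{2,1}$. That argument is sound --- and you correctly resolved your mid-paragraph worry about the closure at $q$ itself, which is just the particular exterior closure hitting the copy containing the generator $q$. Your route has the merit of exhibiting the Hardy--Littlewood factor $\frac{q-1}{q-2}$ prime by prime as it is created, whereas the paper packages the numerators into the single identity $\phi(Q)=\prod_{p\mid Q}(p-1)$ and avoids the coincident-closure analysis entirely. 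One small slip: $Q\mid 2n$ always holds (the radical divides the number), so your hedge that ``$Q\mid g$ is false in general'' is unnecessary; and to make the induction airtight you should note that every constellation of sum $g$ in $\pgap(qN)$ descends from one in $\pgap(N)$ (its endpoints reduce to generators of $\Z \bmod N$), so no new driving terms appear beyond the surviving copies.
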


We establish this result in two steps.  
First we find a stage of Eratosthenes sieve in which
the gap $g=2n$ has driving terms.  
Once we can enumerate the driving terms for $g$ 
in this initial stage of Eratosthenes sieve,
we can establish the asymptotic ratio of gaps $g=2n$ to the gaps $g=2$ as the sieve continues.

\begin{lemma}\label{QLem}
Let $g=2n$ be given.  Let $Q$ be the product of the primes dividing $2n$, including $2$.
$$Q = \prod_{q | 2n} q.$$
Finally, let $\bar{q}$ be the largest prime factor in $Q$.

Then in $\pgap(\pml{\bar{q}})$ the gap $g$ has driving terms, the total number 
of which satisfies
$$ \sum_j n_{g,j}(\pml{\bar{q}}) = \phi(Q) \cdot \prod_{p < \bar{q}, \; p\; \nmid \; Q} (p-2).$$
\end{lemma}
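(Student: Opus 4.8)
The plan is to first determine the driving-term count in the small cycle $\pgap(Q)$, and then transport it up to $\pgap(\pml{\bar q})$ one prime at a time, using the general recursion of Lemma~\ref{LemmaCycle} together with the quantitative form of Corollary~\ref{qNCor}.

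First I would handle $\pgap(Q)$. A driving term of length $j$ for the gap $g$ is simply a constellation of length $j$ and sum $g$; each occurrence of such a constellation is determined by the pair consisting of its first and last generator, which lie at distance $g$ in $\Z\bmod Q$, and conversely every generator $\gamma$ of $\Z\bmod Q$ for which $\gamma+g$ is again a generator starts exactly one such run (the partial sums of consecutive gaps are strictly increasing, so the run with total exactly $g$ starting at $\gamma$ is unique). Hence $\sum_j n_{g,j}(Q)$ is the number of generators $\gamma$ of $\Z\bmod Q$ with $\gcd(\gamma+g,Q)=1$. Since $Q$ is the product of the primes dividing $g$ we have $Q\mid g$, so $\gamma+g\equiv\gamma\bmod Q$ and the condition $\gcd(\gamma+g,Q)=1$ is automatic for a generator $\gamma$. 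Therefore $\sum_j n_{g,j}(Q)=\phi(Q)\ge1$; in particular $g$ already has driving terms at this stage (all of the single large length $\phi(Q)\cdot(g/Q)$, but that is irrelevant to the count).

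Next I would climb from $\pgap(Q)$ to $\pgap(\pml{\bar q})$. Because $\bar q\mid Q$, the primes dividing $\pml{\bar q}$ but not $Q$ are exactly the primes $p<\bar q$ with $p\nmid Q$, and each such $p$ is odd since $2\mid Q$. I introduce them one at a time, passing from $\pgap(N)$ to $\pgap(pN)$. At each step $p\nmid N$ (the running modulus $N$ is $Q$ times some of the other missing primes) and $p\nmid g$ (since $Q$ contains every prime factor of $g$), so case (b) of Lemma~\ref{LemmaCycle} applies. By Theorem~\ref{ThmqN} each possible closure of a driving term $s$ of $g$ in $\pgap(N)$ occurs exactly once among the $p$ copies of $s$ produced in step R1; the two exterior closures fall in distinct copies precisely because $p\nmid g$, so exactly two of the $p$ copies are lost as driving terms for $g$ while the remaining $p-2$ survive, with lengths possibly changed by interior closures. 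This is the computation already carried out in the proof of Corollary~\ref{qNCor}, giving $\sum_j n_{g,j}(pN)=(p-2)\sum_j n_{g,j}(N)$, and since $p-2\ge1$ the gap $g$ keeps its driving terms all the way up.

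Iterating over all primes $p<\bar q$ with $p\nmid Q$ and combining with the count in $\pgap(Q)$ then gives
$$\sum_j n_{g,j}(\pml{\bar q})=\left(\prod_{p<\bar q,\ p\nmid Q}(p-2)\right)\cdot\sum_j n_{g,j}(Q)=\phi(Q)\cdot\prod_{p<\bar q,\ p\nmid Q}(p-2),$$
as claimed. The one delicate point is the first step: the driving terms of $g$ in $\pgap(Q)$ are \emph{wrap-around} constellations, longer than the cycle itself (of length $\phi(Q)\cdot(g/Q)$), so one has to be comfortable counting constellations that traverse the fundamental cycle several times; everything after that is the Chinese-Remainder bookkeeping already developed above.
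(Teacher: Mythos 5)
Your proposal is correct and follows essentially the same route as the paper: count $\phi(Q)$ wrap-around driving terms in $\pgap(Q)$ (the paper phrases this as starting at each of the $\phi(Q)$ gaps and traversing the cycle $g/Q$ times, while you phrase it via generators $\gamma$ with $\gamma+g\equiv\gamma\bmod Q$ — the same count), then introduce the missing primes $p<\bar q$, $p\nmid Q$, one at a time and apply the $(p-2)$ multiplication from Corollary~\ref{qNCor}. The only difference is presentational; your generator-based justification of the $\phi(Q)$ count is slightly more explicit than the paper's.
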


\begin{proof}
Let $n_1 = 2n / Q$.  By Lemma~\ref{LemmaCycle} the cycle of gaps $\pgap(2n)$ consists of $n_1$ concatenated copies
of $\pgap(Q)$.  In $\pgap(Q)$, there are $\phi(Q)$ driving terms for the gap $g=2n$.  To see this, start
at any gap in $\pgap(Q)$ and proceed through the cycle $n_1$ times.  The length of each of these
driving terms is initially $n_1 \cdot \phi(Q)$.

We now want to bring this back into Eratosthenes sieve.  

Let $Q_0=Q$, and let $p_1, \ldots, p_k$ be the prime factors of $\pml{\bar{q}} / Q$.
For $i=1,\ldots, k$, let $Q_i = p_i \cdot Q_{i-1}$, with $Q_k = \pml{\bar{q}}$.  
In forming $\pgap(Q_i)$ from $\pgap(Q_{i-1})$,
we apply Corollary~\ref{qNCor}.  Since $p_i \not| g$, we have
\begin{equation*}
 \sum_{j=1}^{J} n_{2n,j}(Q_i) =  (p_i - 2) \cdot \sum_{j=1}^{J} n_{2n,j}(Q_{i-1})
 \end{equation*}
Thus at $p_k$ we have
\begin{eqnarray*}
 \sum_{j=1}^{J} n_{2n,j}(\pml{\bar{q}}) &=&  \sum_{j=1}^{J} n_{2n,j}(Q_k) \gap = \gap
 (p_k - 2) \cdot \sum_{j=1}^{J} n_{2n,j}(Q_{k-1}) \\
  & = & \left( \prod_{i=1}^k (p_i-2) \right) \sum_{j=1}^J n_{2n,j}Q_0 
= \left( \prod_{i=1}^k (p_i-2) \right) \phi(Q) 
\end{eqnarray*}
\end{proof}

\begin{proof} {\bf of Theorem~\ref{PolThm}.}
Let $g=2n$ be given.  Let $Q$ be the product of the prime factors dividing $g$ and
let $\bar{q}$ be the largest prime factor of $g$.  By Lemma~\ref{QLem} we know that
in $\pgap(\pml{\bar{q}})$ there occur driving terms for $g$ if not the gap $g$ itself.
Lemma~\ref{QLem} gives the total number of these driving terms as
$$ \sum_j n_{g,j}(\pml{\bar{q}})  
 = \phi(Q) \cdot \prod_{p < \bar{q}, \; p\; \nmid \; Q} (p-2).$$
The number of gaps $2$ in $\pgap(\pml{q})$ is
 $n_{2,1}(\pml{q}) = \prod_{2<p\le q} (p-2).$
 So for the ratios we have
\begin{eqnarray*}
\sum_j \Rat{j}{g}(\pml{\bar{q}}) & = &
 \sum_j n_{g,j}(\pml{\bar{q}}) / n_{2,1}(\pml{\bar{q}}) \\
 &=& \phi(Q) / \prod_{p | Q, \; p > 2} (p-2)  = \prod_{p | Q, \; p > 2} \frac{(p-1)}{(p-2)}.
 \end{eqnarray*}
 By Corollary~\ref{qNCor} and Corollary~\ref{InfCor}, we have the result
 $$ \Rat{1}{2n}(\infty) = \prod_{p | 2n, \; p > 2} \left( \frac{p-1}{p-2}\right).$$
\end{proof}

This establishes a strong analogue of Polignac's conjecture for Eratosthenes sieve.
Not only do all even numbers appear as gaps in later stages of the sieve, but they
do so in proportions that converge to specific ratios.  

We use the gap $g=2$ as the reference point since it
has no driving terms other than the gap itself. 
The gaps for other even numbers appear in ratios to $g=2$
implicit in the work of Hardy and Littlewood \cite{HL}.
In their Conjecture B, they predict that the number of gaps $g=2n$ that
occur for primes less than $N$ is approximately
$$ 2 C_2 \frac{N}{(\log N)^2} \prod_{p \neq 2, \; p | 2n} \frac{p-1}{p-2}.$$
We cannot yet predict how many of the gaps in a stage of Eratosthenes sieve
will survive subsequent stages of the sieve to be confirmed as gaps among primes.
However, we note that for $g=2$, the product in the above formula evaluates to $1$, and the ratio
of gaps $g=2n$ to gaps $2$ is given by this product.  

We have shown in Theorem~\ref{PolThm} that this same product describes the
asymptotic ratio of occurrences of the gap $g=2n$ to the gap $2$ in $\pgap(\pml{p})$
as $p \fto \infty$.  So if the survival of gaps in the sieve to be confirmed as gaps among
primes is at all fair, then we would expect this ratio of gaps in the sieve to be preserved
among gaps between primes.

\subsection{Examples from $\pgap(\pml{31})$}
To work with Theorem~\ref{PolThm} we look at some data from $\pgap(\pml{31})$.
In Table~\ref{G31Table} we exhibit part of the table for $\pgap(\pml{31})$, that gives the counts
$n_{g,j}$ of driving terms of length $j$ (columns) for various gaps $g$ (rows).  
The last two columns give the current sum of driving terms for each gap and the 
asymptotic value from Theorem~\ref{PolThm}.

In each subsequent stage of Eratosthenes sieve, some copies of the driving terms of length $j$ will have
at least one interior closure, resulting in shorter driving terms at the next stage.  For this part of the table, 
$g \ge 2p_{k+1}$ and so more than one closure could occur within a single copy of a driving
term.  If a gap $g$ has a driving term of length $j$ in $\pgap(\pml{p_0})$, then at each ensuing stage of the sieve a
shorter driving term will be produced.  Thus the gap itself will occur in
$\pgap(\pml{p_k})$ for $k \le \min j-1$, using the shortest driving term for $g$
in $\pgap(\pml{p_0})$.

We have chosen the
part of the table at which the
driving terms through length $9$ are running out.  In this part of the table we observe
interesting patterns for the maximum gap associated with driving terms of a given length.
The driving terms of length $4$ have sums up to $90$ but none of sums $82$, $86$, or $88$.
Interestingly, although the gap $128$ is a power of $2$, in $\pgap(\pml{31})$ its 
driving terms span the lengths from $11$ to $27$; yet the gaps $g=126$ and $g=132$
already have driving terms of length $9$.

From the tabled values for $\pgap(\pml{31})$, we see that the driving term of length $3$
for $g=74$ will advance into an actual gap in two more stages of the sieve.  Thus the maximum
gap in $\pgap(\pml{41})$ is at least $74$, and the maximum gap for
$\pgap(\pml{43})$ is at least $90$.

With regard to our work above on Polignac's conjecture, note that in Table~\ref{G31Table}, the gaps
$g=74, 82, 86, 94,106, 118, 122$ have not attained their asymptotic ratios,
$$\sum_j \Rat{j}{g}(\pml{31}) \neq \Rat{1}{g}(\infty).$$
Up through $\pgap(\pml{31})$
these ratios are $1$, but for each gap, we know that this ratio will jump
to equal $\Rat{1}{g}(\infty)$ in the respective $\pgap(\pml{\bar{q}})$.  
How does the ratio transition from $1$ to the
asymptotic value?  If we look further in the data for $\pgap(\pml{31})$, we find that for
the gap $g=222$, $\sum_j \Rat{j}{222}(\pml{31})=2$ but the asymptotic value is
$\Rat{1}{222}(\infty) = 72/35.$

These gaps $g=2n$ have maximum prime divisor $\bar{q}$ greater than the prime $p$ for
the current stage of the sieve $\pgap(\pml{p})$.  From Corollary~\ref{qNCor} and the
approach to proving Lemma~\ref{QLem}, we are able to establish the following.

\begin{corollary}
Let $g=2n$, and let $Q=q_1 q_2 \cdots q_k$ be the product of the distinct prime factors of $g$,
with $q_1 < q_2 < \cdots < q_k$.  Then for $\pgap(\pml{p})$,
$$
\sum_j \Rat{j}{g}(\pml{p}) = \prod_{2 < q_i \le p} \left(\frac{q_i-1}{q_i-2}\right).
$$
\end{corollary}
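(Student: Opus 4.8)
The plan is to argue by induction on $p$, building the cycle $\pgap(\pml{p})$ one prime factor at a time via the general recursion of Lemma~\ref{LemmaCycle}(b) together with the exact-closure statement of Theorem~\ref{ThmqN}, and tracking how the total driving-term count behaves at each step. Throughout I write $R_g(N)=\sum_j n_{g,j}(N)$ for the total number of driving terms for $g$ of all lengths in $\pgap(N)$; a preliminary remark I would record is that $R_g(N)$ equals the number of generators $\gamma$ of $\Z\bmod N$ for which $\gamma+g$ is again a generator, since each such $\gamma$ is the left endpoint of exactly one constellation of consecutive gaps summing to $g$. The base case is $\pgap(\pml{2})=2$, where $R_g(\pml{2})=n_{2,1}(\pml{2})=1$ and the asserted product over $2<q_i\le 2$ is empty, hence $1$.

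For the inductive step I would let $q$ be the prime immediately above $p'$, so $\pml{p}=q\cdot\pml{p'}$ with $q\nmid\pml{p'}$, and pass from $\pgap(\pml{p'})$ to $\pgap(\pml{p})$. If $q\nmid g$ this is precisely the situation of Corollary~\ref{qNCor}, so $\sum_j\Rat{j}{g}(\pml{p})=\sum_j\Rat{j}{g}(\pml{p'})$; since $q$ is then not among the $q_i$, the product in the statement is also unchanged and the hypothesis carries over. The case carrying the content is $q\mid g$ (which forces $q$ odd, so $q\neq 2$ but $q\in\{q_1,\dots,q_k\}$). Here I would fix a driving term $s$ for $g$ in $\pgap(\pml{p'})$ running from a generator $\gamma$ to $\gamma+g$; step R1 produces the $q$ copies of $s$ with left endpoints $\gamma+i\pml{p'}$, $i=0,\dots,q-1$. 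By Theorem~\ref{ThmqN} each adjacent-pair closure meeting $s$ occurs in exactly one of these copies; the left exterior closure lands in the copy with $\gamma+i\pml{p'}\equiv 0\pmod q$ and the right exterior closure in the copy with $\gamma+g+i\pml{p'}\equiv 0\pmod q$, and because $q\mid g$ these two congruences coincide. Hence a single copy of $s$ absorbs both exterior closures and is removed as a driving term for $g$, while in each of the other $q-1$ copies the two endpoints survive all closures, so that copy remains a driving term for $g$ (of possibly shorter length, as interior closures may merge some of its gaps). Distinct driving terms of $\pgap(\pml{p'})$ reduce modulo $\pml{p'}$ to distinct generator pairs, so nothing is overcounted, and I would conclude $R_g(\pml{p})=(q-1)\,R_g(\pml{p'})$. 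Since $q$ is odd, $q\nmid 2$ and the computation behind Corollary~\ref{qNCor} gives $n_{2,1}(\pml{p})=(q-2)\,n_{2,1}(\pml{p'})$, so that
\[
\sum_j\Rat{j}{g}(\pml{p})=\frac{q-1}{q-2}\,\sum_j\Rat{j}{g}(\pml{p'})=\frac{q-1}{q-2}\prod_{2<q_i\le p'}\frac{q_i-1}{q_i-2}=\prod_{2<q_i\le p}\frac{q_i-1}{q_i-2},
\]
closing the induction.

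The step I expect to be the main obstacle is the bookkeeping in the case $q\mid g$: I must show that exactly one of the $q$ copies of each driving term is lost (in contrast with the two copies lost when $q\nmid g$, which is what makes Corollary~\ref{qNCor} ratio-preserving), and that the surviving $q-1$ copies are neither discarded nor double-counted, even though when $g\ge 2q$ I cannot control their lengths or the lengths of the interior closures that shorten them. The coincidence of the two exterior-closure congruences modulo $q$ when $q\mid g$ is the one genuinely new ingredient; everything else reuses the Chinese Remainder Theorem accounting already built into Lemma~\ref{DelLemma}, Theorem~\ref{ThmqN}, and Corollary~\ref{qNCor}. As a sanity check I note the shortcut that $R_g$ is multiplicative over coprime moduli by the Chinese Remainder Theorem, with $R_g(q)=q-1$ when $q\mid g$ and $R_g(q)=q-2$ otherwise, and $n_{2,1}(\pml{p})=\prod_{2<q\le p}(q-2)$, which reproduces the formula at once; the recursive argument above is the version that stays inside the paper's framework.
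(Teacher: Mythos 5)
Your argument is correct, but it follows a genuinely different route from the paper's. The paper proves this by re-ordering the prime factors: it first forms $\pgap(Q_j)$ from the divisors of $g$ up to $p$, where (as in Lemma~\ref{QLem}) every one of the $\phi(Q_j)$ generators begins a driving term, and then splices in the remaining primes below $p$ using Corollary~\ref{qNCor}, which leaves the ratio untouched because those primes do not divide $g$. You instead stay in the primorial tower and induct over the primes in their natural order, which forces you to supply the one ingredient the paper never states explicitly: when $q \mid g$, the two exterior-closure congruences $\gamma + iN \equiv 0$ and $\gamma + g + iN \equiv 0 \pmod q$ coincide, so both exterior closures land in the \emph{same} copy of each driving term and only one of the $q$ copies is destroyed, giving $\sum_j n_{g,j}(qN) = (q-1)\sum_j n_{g,j}(N)$ against $n_{2,1}(qN)=(q-2)\,n_{2,1}(N)$. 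That observation is sound (and your bookkeeping via left endpoints --- a driving term for $g$ in $\pgap(N)$ is exactly a generator $\gamma$ with $\gamma+g$ also a generator --- takes care of both survival and non-double-counting, including wrap-around terms). What each approach buys: the paper's version recycles the $\pgap(Q)$ construction already needed for Theorem~\ref{PolThm} and never has to analyze closures at primes dividing $g$; yours makes transparent \emph{why} the ratio jumps by exactly $\frac{q-1}{q-2}$ at each prime $q\mid g$ and at no other prime. Your closing ``sanity check'' --- that the total driving-term count is the multiplicative function $\prod_{q\le p} r_q$ with $r_q = q-1$ if $q\mid g$ and $q-2$ otherwise, by the Chinese Remainder Theorem --- is in fact the shortest complete proof of the statement and could stand on its own.
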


\begin{proof}
Let $p=q_j$ for one of the prime factors in $Q$.  By Corollary~\ref{qNCor} these are the only
values of $p$ at which the sum of the ratios $\sum_j \Rat{j}{g}(p)$ changes.

Let $Q_j = q_1 q_2 \cdot q_j$.
In $\pgap(\pml{q_j})$, $g$ behaves like a multiple of $Q_j$. 
As in the proof of Lemma~\ref{QLem}, in $\pgap(Q_j)$ each generator begins a driving term
of sum $2n$, consisting of $2n / Q_j$ complete cycles.  There are $\phi(Q_j)$ such driving
terms.  

We complete $\pgap(\pml{q_j})$ as before by introducing the missing prime factors.  
The other prime factors do not
divide $2n$, and so by Corollary~\ref{qNCor} the sum of the ratios is unchanged by these factors.
We have our result:
$$ \sum_j \Rat{j}{g}(\pml{q_j}) = \prod_{2 < q_i \le q_j} \left(\frac{q_i-1}{q_i-2}\right).
$$
\end{proof}

For the gap itself, we know from Equation~\ref{EqEigSys} that the ratio $\Rat{1}{g}(\pml{p})$ 
converges to its asymptotic value
as quickly as $a_2^k \fto 0$.  We have observed above that this convergence is slow.

\section{Extending the model to constellations}
To this point we have been focusing on the populations of gaps and considering constellations
only as driving terms for gaps.  We now extend the population model to track the population of
a constellation $s$ of any length $j_1$ across stages of the sieve.

A few specific constellations of primes have been studied
\cite{HL, quads, GPY, GranPatterns}, and this work provides analogues for these studies within
Eratosthenes sieve.

The most remarkable result is a {\em strong Polignac result on arithmetic progressions}.
In Theorem~\ref{PolThm} we established that the equivalent of Polignac's conjecture holds for Eratosthenes 
sieve -- that every even number arises as a gap in the sieve, and its population
converges toward the ratio implied by Hardy and Littlewood's Conjecture B.
Extending that work to constellations, we 
now establish that for any even gap $g$, if $p$ is the maximum prime
such that $\pml{p} | g$ and $P$ is the next prime larger than $p$, 
then for every $2 \le j_1 < P-1$, the constellation $g,g,\ldots,g$
of length $j_1$ arises in Eratosthenes sieve.  This constellation corresponds to an
arithmetic progression of $j_1+1$ consecutive candidate primes.

For other examples of interesting constellations to track, we consider various constellations that
include twin primes.  The constellation $s = 2 \; 4\; 2$ corresponds to {\em prime quadruplets},
\cite{NicelyQuads, JLB, Rib, quads}.
We model the population of this constellation in Eratosthenes sieve, and we find that the constellation
$s = 2,10,2$ eventually occurs over twice as often.  

Longer constellations that include twins include
$s=2,10,2,10,2$ and
$$ s = 2, \; 10, \; 2, \; 10, \; 2, \; 4, \; 2, \; 10, \; 2, \; 10, \; 2.$$
This constellation of length $j_1 = 11$ corresponds to six pairs of twin primes in a span of $56$.  This constellation
arises in $\pgap(\pml{13})$ and persists; we analyze its population below.

\subsection{General model for populations of constellations. }
With the following theorem, we generalize the discrete dynamic system for gaps to 
model the populations of constellations across stages of Eratosthenes sieve.

\begin{theorem}\label{sThm}
Let $s$ be a constellation of length $j_1$ and of sum $|s|$.
Let $p_0$ be a prime such that $|s| < 2p_1$.

Let $n_j(s,\pml{p})$ be the population of driving terms of length $j$ for $s$ in $\pgap(\pml{p})$,
with $j_1 \le j \le J$ and $p \ge p_0$.  Then
\begin{eqnarray}\label{j1JEq}
n_s(\pml{p_k}) & = & \left. M_{j_1:J}\right|_{p_k} \cdot n_s(\pml{p_{k-1}}) \\
 & = & R \cdot \left. \Lambda_{j_1:J} \right|_{p_k} \cdot L \cdot n_s(\pml{p_{k-1}}). \notag
\end{eqnarray}
in which $L$ is the upper triangular Pascal matrix, $R$ is the upper triangular Pascal
matrix with alternating signs $(-1)^{i+j}$, and $\Lambda$ is the diagonal matrix of
eigenvalues $(p_k-j-1)$ for $j_1 \le j \le J$.
\end{theorem}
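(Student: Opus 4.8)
The plan is to split the statement into two independent pieces: a combinatorial step that pins down the transfer matrix $\left. M_{j_1:J}\right|_{p_k}$ as a fixed upper bidiagonal matrix depending only on $p_k$, and a linear-algebra step that reads off its eigenstructure by reducing to the single-gap matrix $M_{1:J}$ already analyzed above. Here $J$ is taken large enough that no constellation of length $J+1$ has sum $|s|$ (any $J\ge \lfloor |s|/2\rfloor$ works), so that the system is finite and closes at the top.

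For the combinatorial step I would start from the observation that a driving term of length $\ell$ for $s$ is exactly a constellation that \emph{refines} $s$: partitioning its $\ell$ gaps into $j_1$ consecutive blocks recovers the gaps of $s$. Such a refinement has $\ell-j_1$ boundaries lying strictly inside a block of $s$ and $j_1-1$ boundaries coinciding with interior boundaries of $s$ itself. Since $|s|<2p_1\le 2p_{k+1}$ at every stage, Lemma~\ref{Lemma2p} applies to each driving term $t$ for $s$ occurring in $\pgap(\pml{p_k})$: forming $\pgap(\pml{p_{k+1}})$ creates $p_{k+1}$ copies of $t$, its $\ell+1$ closure positions fall in distinct copies, $p_{k+1}-\ell-1$ copies survive intact as length-$\ell$ driving terms, the two exterior closures raise the sum and are lost, and among the $\ell-1$ interior closures precisely the $\ell-j_1$ that close an inside-a-block boundary produce a shorter refinement of $s$ (a length-$(\ell-1)$ driving term), while the remaining $j_1-1$ merge two blocks of $s$ and yield a coarsening that is no longer a driving term for $s$. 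Conversely, since consecutive R3-closures lie at least $2p_{k+1}$ apart while $|s|<2p_{k+1}$, at most one closure can occur inside any window of sum $|s|$; hence every length-$(\ell-1)$ driving term for $s$ in $\pgap(\pml{p_{k+1}})$ is either a surviving copy of one from the previous stage or the image of a length-$\ell$ driving term under exactly one such interior closure. Counting incidences, using Lemma~\ref{DelLemma} to know each closure configuration is realized exactly once, and re-indexing $\ell=j+1$ yields
$$ n_{s,j}(\pml{p_{k+1}}) = (p_{k+1}-j-1)\, n_{s,j}(\pml{p_k}) + (j+1-j_1)\, n_{s,j+1}(\pml{p_k}), $$
so $\left. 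M_{j_1:J}\right|_{p}$ is the $(J-j_1+1)\times(J-j_1+1)$ upper bidiagonal matrix with diagonal entries $p-j-1$ for $j_1\le j\le J$ and super-diagonal entries $j+1-j_1$, independent of $s$. The $j=j_1$ row, with coefficient $1$, is exactly the recursion of Theorem~\ref{ThmGrowth}; for $j_1=1$ all rows have coefficient $j$, reproducing the raw-count gap system.

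For the eigenstructure I would change to local coordinates $a=j-j_1+1\in\{1,\dots,m\}$, $m=J-j_1+1$, in which the diagonal entry is $p-j_1-a$ and the super-diagonal entry in position $(a,a+1)$ is $a$. Comparing with the $m$-dimensional gap matrix $M_{1:m}$ shows, as an identity in the parameter $p$, $\left. M_{j_1:J}\right|_{p}=(p-j_1-1)\cdot\left. M_{1:m}\right|_{p-j_1+1}$: a scalar multiple of the gap transfer matrix at the shifted prime $p-j_1+1$. The eigenvectors of $M_{1:m}$ are independent of the prime — the right eigenvectors form the alternating-sign upper triangular Pascal matrix $R$, the left eigenvectors the upper triangular Pascal matrix $L$, and $LR=I$ because the Pascal matrix and its sign-alternating analogue are mutually inverse — so the scalar multiple shares $R$ and $L$ and merely scales the eigenvalues, sending each $\frac{(p-j_1+1)-a-1}{(p-j_1+1)-2}$ to $p-j_1-a=p-j-1$. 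This gives $\left. M_{j_1:J}\right|_{p}=R\cdot\left.\Lambda_{j_1:J}\right|_{p}\cdot L$ with $\left.\Lambda_{j_1:J}\right|_{p}={\rm diag}(p-j_1-1,\dots,p-J-1)$, and the iterated product $\left. M_{j_1:J}\right|_{p_k}\cdots\left. M_{j_1:J}\right|_{p_1}$ then collapses to a single $R$ on the left, a product of diagonal eigenvalue matrices in the middle, and a single $L$ on the right, exactly as in the gap case. (One could instead verify $MR=R\Lambda$ and $LR=I$ directly from the standard Pascal/Vandermonde identities, but the rescaling argument is shorter.)

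The main obstacle is the combinatorial step, and within it the precise bookkeeping that exactly $\ell-j_1$ of the interior closures of a length-$\ell$ refinement of $s$ produce driving terms of length $\ell-1$ while the other $j_1-1$ do not, together with the exhaustiveness claim that no length-$(\ell-1)$ driving term at the next stage arises in any other way. Both rest on Lemma~\ref{DelLemma} and on the separation $2p_{k+1}$ between successive R3-closures forced by $|s|<2p_{k+1}$; once the recursion is pinned down, the eigenstructure is inherited from the single-gap case essentially for free.
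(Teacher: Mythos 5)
Your proposal is correct and follows essentially the same route as the paper: the combinatorial core (the $j_1+1$ boundary closures eliminating copies, the $j-j_1$ interior closures producing length-$(j-1)$ driving terms, and the condition $|s|<2p_k$ forcing all closures into distinct copies) is exactly the paper's argument, and your recursion $n_{s,j}(\pml{p_{k}}) = (p_{k}-j-1)\, n_{s,j}(\pml{p_{k-1}}) + (j+1-j_1)\, n_{s,j+1}(\pml{p_{k-1}})$ reproduces the stated bidiagonal matrix. Your rescaling identity $\left. M_{j_1:J}\right|_{p}=(p-j_1-1)\cdot\left. M_{1:m}\right|_{p-j_1+1}$ is a nice touch the paper omits (it simply asserts the Pascal eigenstructure), and your explicit check that the $j_1-1$ between-block closures destroy the refinement while exhaustively accounting for where new shorter driving terms come from is slightly more careful than the paper's proof.
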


As with our work on gaps, this explicit enumeration has two limitations.  The first limitation
is that all of the populations grow super exponentially by factors of $(p-j_1-1)$.  To address this,
we normalize the dynamic system by dividing by factors of $(p-j_1-1)$, which makes the
dominant eigenvalue equal to $1$.

The second limitation of Theorem~\ref{sThm} is the condition $|s| < 2p_1$.  Because of
this condition, to exactly model the population of a constellation we have to calculate its
initial conditions in $\pgap(\pml{p})$, for $p$ very close to $|s|$.  But $\pgap(\pml{p})$ 
 consists of $\phi(\pml{p})$ gaps, which becomes unwieldy around $p=37$:  
 $\phi(\pml{37}) \approx 1.1036 \; E12.$

The model for gaps that we developed above in Sections~\ref{GapExSection} \& 
\ref{GapSection}
relies on keeping track of the internal closures and external closures
for the driving terms for gaps.  For constellations, we need a richer concept than external closures,
and so we define the {\em boundary closures} for a driving term for a constellation.

Let $s$ be a constellation of length $j_1$, $s=g_1,g_2,\ldots,g_{j_1}$.
A driving term $\tilde{s}$ for $s$ will have the form $\tilde{s} = \tilde{s_1} \; \tilde{s_2} \ldots\tilde{s_{j_1}}$
in which $|\tilde{s_i}|=g_i$ for each $i$.

We call the
closures within an $\tilde{s_i}$ the {\em interior closures} for $\tilde{s}$, and the
exterior closures for each $\tilde{s_i}$ are the {\em boundary closures} for $\tilde{s}$.
Interior closures preserve the copy of $\tilde{s}$ as a driving term for $s$ but of
shorter length.  The $j_1+1$ boundary closures remove the copy of $\tilde{s}$
from being a driving term for $s$.

We are now ready to prove Theorem~\ref{sThm}.

\begin{proof} {\em of Theorem~\ref{sThm}:}
Let $s$ be a constellation of length $j_1$ and of sum $|s|$.
Let $p_0$ be a prime such that $|s| < 2p_1$.

Let $n_{s,j}(\pml{p})$ be the population of driving terms of length $j$ for $s$ in $\pgap(\pml{p})$,
with $j_1 \le j \le J$ and $p \ge p_0$.

Consider the recursion from $\pgap(\pml{p_{k-1}})$ to $\pgap(\pml{p_k})$, with $k \ge 1$.
Let $\tilde{s}$ be a particular occurrence of a driving term of length $j$ in $\pgap(\pml{p_{k-1}})$.
Since $|s| < 2p_k$, for the $p_k$ copies of $\tilde{s}$ initially created during step R2
of the recursion, 
the $j+1$ closures of step R3 all occur in different copies.  

Of the $p_k$ initial copies of $\tilde{s}$, the boundary closures eliminate $j_1+1$ copies
as driving terms for $s$.
The $j-j_1$ interior closures produce $j-j_1$ driving terms of length $j-1$, and 
$p_k-j-1$ copies of $\tilde{s}$ survive intact as driving terms of length $j$.

We can express this in the dynamic system:
\begin{eqnarray*}
n_s (\pml{p_k}) & = & M_{j_1:J}(p_k) \cdot n_s(\pml{p_{k-1}}) \\
 & = & \left[ \begin{array}{ccccc}
 p_k-j_1-1 & 1 & 0 & \cdots & 0 \\
 0 & p_k-j_1-2 & 2 & \cdots & 0 \\
 \vdots & & \ddots & \ddots & \vdots \\
 0 & \cdots & & & J-j_1 \\
 0 & \cdots & & & p_k-J-1
 \end{array} \right] \cdot n_g(\pml{p_{k-1}})  
 \end{eqnarray*}
Note that $M_{j_1:J}(p)$ is a $(J-j_1+1)\times(J-j_1+1)$ matrix.

The matrix $M_{j_1:J}(p)$ has the same eigenvectors as the matrix for gaps (of size
$J-j_1+1$).  The eigenstructure for $M_{j_1:J}(p)$ is given by
\[  M_{j_1:J}(p) = R \cdot \Lambda_{j_1:J}(p) \cdot L
\]
in which $R$ is still the upper triangular Pascal matrix with alternating signs, and $L$
is the upper triangular Pascal matrix.  These do not depend on the prime $p$. 
The eigenvalues are 
\[ \Lambda_{j_1:J}(p) =  {\rm diag} \left[ p-j_1-1, \; p-j_1-2, \ldots, p-J-1 \right].
\]
\end{proof}

Since the eigenvectors do not depend on the prime $p$, the dynamic system
can easily be expressed in terms of the initial conditions.
\begin{eqnarray}\label{nsEq}
n_s (\pml{p_k}) & = & \left. M_{j_1:J} \right|_{p_k} \cdots \left. M \right|_{p_1} \cdot n_s(\pml{p_0}) \\
 & = & R \cdot \Lambda_{j_1:J}^k  \cdot L \cdot n_s(\pml{p_0}) \nonumber
\end{eqnarray}
with
\[ \Lambda_{j_1:J}^k = {\rm diag} \left[ \prod_1^k (p_i-j_1-1), \gap  
\prod_1^k (p_i-j_1-2), \ldots, \gap \prod_1^k (p_i-J-1) \right].
\]

\subsection{Normalizing the populations}
In the large, the population of a constellation of length $j_1$ grows primarily by a factor of $(p-j_1-1)$.  So all constellations of length $j$ ultimately become more numerous than 
any constellation of length $j+1$, and comparing the asymptotic populations of constellations
of different lengths is thereby trivial.  

On the other hand, to determine the relative occurrence {\em among constellations of a given length $j_1$}, 
we divide by the factor $(p-j_1-1)$.  
We define the {\em normalized population} of a constellation $s$ of length $j_1$ and driving terms
up to length $J$ as
\begin{equation*}
w_s (\pml{p}) = \left( \prod_{q > j_1+1}^p \frac{1}{q-j_1-1} \right) \cdot n_s(\pml{p}) 
 \; = \; \frac{1}{\phi_{j_1+1}(\pml{p})} \cdot n_s(\pml{p}).
\end{equation*}
We here introduce the functions $\phi_i(\pml{p}) = \prod_{q > i}^{p}(q-i).$

{\bf Definition.}  Let $Q=q_1 q_2 \cdots q_m$ be a product of distinct primes, with 
$q_1 < q_2 < \ldots < q_m$.  We define 
\[ \phi_{i}(Q) = \prod_{q_j > i} (q_j - i).\]
Note that $\phi_1 = \phi$, the Euler totient function, over the defined domain - products of distinct primes.  For the primorials we have
\[ \phi_{i}(\pml{p}) = \prod_{q>i}^{p} (q-i). \]

For the normalized populations, the dynamic system becomes
\begin{equation}\label{wsEq}
w_s (\pml{p_k}) =  R \cdot \Lambda_{j_1:J}^k  \cdot L \cdot w_s(\pml{p_0})
\end{equation}
with
\[ \Lambda_{j_1:J}^k = {\rm diag} \left[ 1, \gap \prod_1^k \frac{p_i-j_1-2}{p_i-j_1-1}, \gap  
\prod_1^k \frac{p_i-j_1-3}{p_i-j_1-1}, \ldots, \gap \prod_1^k \frac{p_i-J-1}{p_i-j_1-1} \right].
\]
For these normalized populations, the dominant eigenvalue is now $1$.

For gaps, this normalization corresponds nicely to taking the ratio of the population of the gap under consideration to the population of the gap $g=2$.  For $j_1=2$, we can again interpret this normalization as a ratio to a reference constellation,
in this case to the populations of constellations $s=24$ or $s=42$. These constellations have no driving terms longer than $j_1$, and $n_{24}(\pml{p}) = n_{42}(\pml{p})$, so there is no ambiguity.  This correlation between the normalization and ratios to specific reference constellations consisting of $2$'s and $4$'s begins to break down at $j_1=3$ and collapses completely at $j_1=6$.  For $j_1=3$, we have two constellations to choose from, $s=242$ or $s=424$, and there is ambiguity because $n_{424}(\pml{p}) = 2\cdot n_{242}(\pml{p})$.  For $j_1=4$ we have $s=2424$ and $s=4242$, and for $j_1=5$ we have $s=42424$.  However, by looking at $\pgap(\pml{5})$ we see that there are no constellations consisting only of $2$'s and $4$'s for length $j_1 \ge 6$.  

So we see that the normalization does not necessarily provide ratios of the populations of constellations of length $j_1$ to the population of a known reference constellation.  Instead, it provides relative populations to a (perhaps hypothetical) constellation 
-- a constellation of length $j_1$ without driving terms other than the constellation itself
and with population $\phi_{j_1+1}(\pml{p})$ and therefore asymptotic ratio $w_s^\infty = 1$.

For example, for $j_1=3$ we have $p_0=5$ and $p_0-j_1-1=1$.  The symmetric
constellation $s=242$ has length $3$; its initial population $n_{242,3}(\pml{5})=1$, and it
has no additional driving terms.  So $w_{242}^\infty = 1$, and we can use $s=242$ as the reference constellation
for constellations of length $3$.  In contrast, the symmetric
constellation $s=424$ has $w_{424}^\infty = 2$, even though it too has no additional driving terms.

\subsection{Relative occurrence in the large.} 
Equation~(\ref{wsEq}) applies under the same conditions as Theorem~\ref{sThm}: that $s$
is a constellation of length $j_1$ with driving terms up to length $J$, and $p_0$ is a prime
such that $|s| < 2p_1$.  

If we can get a count $n_s(\pml{p_0})$ for $s$ and all of its driving terms in $\pgap(\pml{p_0})$
with $|s| < 2p_1$, then we can apply the eigenstructure to obtain the
asymptotic number of occurrences $w_s^\infty$ of $s$ relative to other constellations
of length $j_1$ as $p_k \fto \infty$:
\begin{eqnarray*}
w_s^\infty & = & \lim_{p_k \fto \infty} w_{s,j_1}(\pml{p_k}) \\
 & = & \lim_{p_k \fto \infty} \frac{n_{s,j_1}(\pml{p_k})}{\phi_{j_1+1}(\pml{p_k})} \\
 & = & L_1 \cdot w_s(\pml{p_0})
\end{eqnarray*}

For this asymptotic result, we can soften the requirement of needing to work with 
$p_0$ such that $|s| < 2p_1$.  For the full dynamic system to apply, this condition
guarantees that for each occurrence $\tilde{s}$ of a driving term for 
$s$ in $\pgap(\pml{p_{k-1}})$, the closures in forming $\pgap(\pml{p_k})$ occur
in distinct copies of $\tilde{s}$.

Since $L_1 = [1, 1, \ldots, 1]$, to calculate $w_s^\infty$ we only need to know
that the $j_1+1$ boundary closures for any driving term
occur in distinct copies.  The interior closures may occur in copies that have other interior closures or
that also have a boundary closure, but we
need to know that under the recursion for prime $p$, $p-j_1-1$ copies of $\tilde{s}$ survive as driving
terms for $s$.

\begin{theorem}\label{winfThm}
Let $s$ be a constellation of $j_1$ gaps
\[ s = g_1, \; g_2, \; , \ldots, \; g_{j_1}.
\]
Let $p_0$ be the highest prime that divides any of the intervals 
$g_i + \cdots + g_j$ with $1 \le i \le j \le j_1$.
Then for any $p \ge p_0$,
\[ w_s^\infty = L_1 \cdot w_s(\pml{p}).
\]
\end{theorem}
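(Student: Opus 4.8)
Since $L_1 = [1,1,\ldots,1]$, the quantity $L_1 \cdot w_s(\pml{p})$ is precisely $\sum_j w_{s,j}(\pml{p})$, the total normalized population of $s$ together with all of its driving terms in $\pgap(\pml{p})$. The plan is to prove two things: (1) this total is unchanged when we pass from $\pgap(\pml{p})$ to $\pgap(\pml{q})$ for consecutive primes $p < q$, as soon as $p \ge p_0$; and (2) its stable value equals the limit $w_s^\infty$. Part (2) is essentially already available: once we reach a prime large enough that $|s|$ is less than twice the following prime, Theorem~\ref{sThm} and the eigenstructure $M_{j_1:J} = R\,\Lambda_{j_1:J}\,L$ apply, and since the dominant eigenvalue is $1$ with right eigenvector $R_1 = e_1$ while all other eigenvalues tend to $0$, the first coordinate $w_{s,j_1}(\pml{p_k})$ converges to $L_1 \cdot w_s$ evaluated at that stage. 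So the substantive work is Part (1), which is the constellation analogue of Corollary~\ref{qNCor}, and $p_0$ is exactly the hypothesis that makes it go through.

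For Part (1), I would fix a prime $q > p_0$ and a product $N$ of distinct primes all below $q$, and build $\pgap(qN)$ from $\pgap(N)$ by Lemma~\ref{LemmaCycle}(b). Let $\tilde{s} = \tilde{s}_1 \tilde{s}_2 \cdots \tilde{s}_{j_1}$ be an occurrence of a driving term of length $j$ for $s$ in $\pgap(N)$, with $|\tilde{s}_i| = g_i$. Step R1 produces $q$ copies of $\tilde{s}$, and by Theorem~\ref{ThmqN} steps R2--R3 perform each possible closure of $\pgap(N)$ exactly once, so the closures landing inside $\tilde{s}$ are distributed among its $q$ copies. The crucial point is that the $j_1+1$ boundary closures of $\tilde{s}$ fall in $j_1+1$ \emph{distinct} copies: the boundary closure at the $i$-th interface sits at the generator $\gamma_i = \gamma + g_1 + \cdots + g_i$ (with $\gamma$ the generator just before $\tilde{s}$), and it occurs in the unique copy $\ell$ with $\gamma_i + \ell N \equiv 0 \pmod{q}$; if two boundary closures, at interfaces $i < i'$, occurred in the same copy, then $q \mid \gamma_{i'} - \gamma_i = g_{i+1} + \cdots + g_{i'}$, contradicting that $p_0$ is the largest prime dividing any such sub-interval sum. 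Hence $j_1+1$ of the $q$ copies are removed as driving terms for $s$ (a boundary closure merges a gap of some block $\tilde{s}_i$ with an adjacent gap, destroying the decomposition into blocks of sums $g_1,\ldots,g_{j_1}$), while the remaining $q-j_1-1$ copies receive only interior closures and survive as driving terms for $s$ of various lengths. Summing over all driving terms in $\pgap(N)$ gives $\sum_j n_{s,j}(qN) = (q-j_1-1)\sum_j n_{s,j}(N)$; since $\phi_{j_1+1}(qN) = (q-j_1-1)\,\phi_{j_1+1}(N)$, dividing through yields $\sum_j w_{s,j}(qN) = \sum_j w_{s,j}(N)$. Iterating from $\pgap(\pml{p_0})$ upward --- every newly introduced prime exceeds $p_0$ and therefore divides no sub-interval sum of $s$ --- shows $L_1 \cdot w_s(\pml{p})$ is independent of the prime $p \ge p_0$. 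Combining with Part (2): pick any prime $P \ge \max(p_0,\,|s|/2)$, so the prime after $P$ exceeds $|s|/2$, take $J = \lfloor |s|/2\rfloor$, and get $w_s^\infty = L_1 \cdot w_s(\pml{P}) = L_1 \cdot w_s(\pml{p})$ for every prime $p \ge p_0$.

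The main obstacle is the \emph{distinct copies} claim for the $j_1+1$ boundary closures: this is the one genuinely new ingredient beyond Corollary~\ref{qNCor} and the eigenstructure already in place, and it is exactly where the definition of $p_0$ is consumed. A secondary point I would record as a short remark inside the proof is that the factor $q - j_1 - 1$ is positive for every prime $q > p_0$ and that $\phi_{j_1+1}$ scales by precisely this factor; this holds because for a feasible constellation no prime can lie in $(p_0, j_1]$ --- such a prime $q$ would force the $j_1+1$ offsets $0, g_1, g_1 + g_2, \ldots, |s|$ of $s$ to be pairwise distinct modulo $q$ (their differences being the sub-interval sums, all nonzero mod $q$ since $q > p_0$), which is impossible when $q \le j_1$.
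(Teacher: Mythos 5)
Your proof is correct and takes essentially the same route as the paper: the paper's own (very terse) proof simply asserts that from $\pgap(\pml{p_0})$ onward the $j_1+1$ boundary closures of any driving term fall in distinct copies, so the total population scales by exactly the factor $p_k-j_1-1$ and $L_1\cdot w_s$ is invariant, and your congruence argument (two boundary closures in one copy would force $q \mid \gamma_{i'}-\gamma_i = g_{i+1}+\cdots+g_{i'}$, impossible for $q>p_0$) is precisely the justification the paper leaves unstated, with the appeal to the eigenstructure for the limit likewise matching the paper's preceding subsection. The only caveat concerns the theorem as stated rather than your argument: a prime $q=j_1+1$ with $q>p_0$ (e.g.\ $s=2,2$, where $p_0=2$ and $q=3$) makes the factor $q-j_1-1$ vanish and the claim fail for $p<q$, an implicit feasibility assumption that your closing remark comes closer to confronting than the paper does.
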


\begin{proof}
Let $\tilde{s}$ be a driving term for $s$.
From $\pgap(\pml{p_0})$ on, the closures that remove copies of $\tilde{s}$ from being
driving terms for $s$ all occur in distinct copies.  Thus for any subsequent $p_k$,
\[ \left| n_s(\pml{p_k}) \right| = (p_k - j_1-1) \left| n_s(\pml{p_{k-1}}) \right|.
\]
We cannot be certain about the lengths of all of the copies that survive as driving terms,
but we do know how the total population grows -- by exactly the factor $p_k-j_1-1$. and
so for all $p_k > p_0$, 
\[ w_s^\infty = L_1 \cdot w_s(\pml{p_k}).
\]
\end{proof}

\subsection{Constellations related to twin primes.}
Twin primes correspond to a gap $g=2$.  Here we are studying how gaps $g=2$
arise in Eratosthenes sieve, and we do not address how many of these might survive
the sieve to become gaps between primes (in this case between twin primes).

The gap $g=2$ arises in several interesting constellations.  The first, $s=242$,
corresponds to prime quadruplets.  Prime quadruplets are the densest occurrence of 
four primes in the large, two pairs of twin primes separated by a gap of $4$.
The constellation $s=242$ has no additional driving terms, $j_1=J=3$.  
We could use $p_0=3$, for which $n_{242}(\pml{3})=[1]$.  We have
\[ w_{242}^\infty = 1.
\]

\subsubsection{Constellation $s=2,10,2$}
The next constellation we consider is $s=2,10,2$, which corresponds
to two pairs of twin primes separated by
a gap of $10$.  Here $j_1=3$ and $J=4$, for the driving terms $2642$ and $2462$.
Using $p_0=7$, we count $n_{2,10,2}(\pml{7})=[2,6]$ for two occurrences of $s=2,10,2$ and
three occurrences each of the driving terms $2642$ and $2462$.
\begin{eqnarray*}
w_{2,10,2}(\pml{7}) & = & \frac{1}{\phi_4(\pml{7})} \left[ \begin{array}{c} 2 \\ 6 \end{array} \right] 
 \; = \; \left[ \begin{array}{c} 2 / 3 \\ 2 \end{array} \right] \\
 {\rm and} & & \\
 w_{2,10,2}^\infty & = & 8 /3
\end{eqnarray*}
This means that as $p_k \fto \infty$, the number of occurrences of $s=2,10,2$ 
in the sieve approaches 
$8/3$ times the number of occurrences of the constellation $242$.  Remember that these
weights $w^\infty$ are relative only to other constellations of the same length $j_1$.

\subsubsection{Constellation $s=2,10,2,10,2$}
The constellation $s=2,10,2,10,2$ corresponds to three pairs of twin primes with gaps
of $g=10$ separating them.  This constellation also contains two overlapping copies of
$2,10,2$, and two overlapping driving terms for the constellation $12,12$ to which we will
return when we look at arithmetic progressions.

For $s=2,10,2,10,2$, we have $j_1=5$ and $J=7$.  Since $|s|=26$, 
for initial conditions we have to use $\pgap(\pml{13})$.
\begin{eqnarray*}
n_{2,10,2,10,2}(\pml{13}) & = & \left[ \begin{array}{c} 52 \\ 44 \\ 48 \end{array} \right] \\
w_{2,10,2,10,2}(\pml{13}) & = & \frac{1}{\phi_6(\pml{13})}
 \left[ \begin{array}{c} 52 \\ 44 \\ 48 \end{array} \right] \; = \;
 \frac{1}{35}
 \left[ \begin{array}{c} 52 \\ 44 \\ 48 \end{array} \right]  \\
{\rm and} & & \\
w_{2,10,2,10,2}^\infty & = & 144/35.
\end{eqnarray*}
So among constellations of length $5$, the constellation $s=2,10,2,10,2$
occurs with a relative frequency of $144/35$.  For length $j_1=5$, we can use the
constellation $42424$ as a reference.  The population model shows that in the large,
the constellation $s=2,10,2,10,2$ occurs over four times as frequently as the
constellation $42424$.

\subsubsection{Constellation $s=2,10,2,10,2,4,2,10,2,10,2$}  
Along this line of inquiry into constellations that contain several $2$s, we observe that 
the following constellation occurs in $\pgap(\pml{13})$:
\[ s = 2,10,2,10,2,4,2,10,2,10,2.
\]
This is two copies of $2,10,2,10,2$ separated by a gap of $4$.  This corresponds
to six pairs of twin primes, or twelve primes total, occurring in an interval of $|s|=56$.

For this constellation $s$, we have $j_1=11$ and $J=13$.  It would theoretically be possible
for each of the $10$'s to be produced through closures, so there could be driving terms of 
length up to $15$, but when we inspect $\pgap(\pml{13})$, we find that there are two copies of
$s$, ten driving terms of length $12$, twelve driving terms of length $13$, and no driving terms
of length $14$ or $15$.

Since $|s|=56$, to use Theorem~\ref{sThm} we would need to use $p_0=23$ to employ the 
full dynamic system.  However, to obtain the asymptotic results of Theorem~\ref{winfThm}
we can use $p_0=13$.  We calculate $w_s^\infty = 24$.  This means that relative to a
perhaps hypothetical constellation of length $11$ with one occurrence in $\pgap(\pml{13})$ and no 
additional driving terms, as the sieve 
continues the constellation $s$ will occur approximately $24$ times as often.

\begin{table}
\begin{tabular}{c|rrr|rrr} \hline
$s$ & $|s|$ & $j_1$ & $J$ & $p_0$ & $n(\pml{p_0})$ & $\omega^\infty$ \\ \hline
$242$ & $8$ & $3$ & $3$ & $5$ & $[1]$ & $1$ \\
$424$ & $10$ & $3$ & $3$ & $5$ & $[2]$ & $2$ \\
$2,10,2$ & $14$ & $3$ & $4$ & $7$ & $[2,6]$ & $8/3$ \\
$42424$ & $16$ & $5$ & $5$ & $7$ & $[1]$ & $1$ \\
$2,10,2,10,2$ & $26$ & $5$ & $7$ & $13$ & $[52, 44, 48]$ & $144/35$ \\ 
$2,10,2,10,2,4,2,10,2,10,2$ & $56$ & $11$ & $13$ & $13$ & $[2, 10, 12]$ & $24$ \\
$66$ & $12$ & $2$ & $4$ & $5$ & $[0, 2, 2]$ & $2$ \\
$12,12$ & $24$ & $2$ & $6$ & $11$ & $[0, 2, 20, 48, 58]$ & $2$ \\
$666$ & $18$ & $3$ & $5$ & $7$ & $[0, 4, 2 ]$ & $2$ \\ \hline
\end{tabular}
\caption{Table of initial conditions and parameters for a few representative constellations.
The population of a constellation of length $j_1$ grows primarily by a factor of
$p-j_1-1$, so the asymptotic weights $w_s^\infty$ must be interpreted relative only
to other constellations of the same length.}
\end{table}

\subsection{Consecutive primes in arithmetic progression}
A sequence of $j_1+1$ consecutive primes in arithmetic progression corresponds 
to a constellation of $j_1$ identical gaps $g$.
By considering residues, we easily see that for a sequence of $j_1+1$ primes in arithmetic 
progression, $g$ must be divisible by every prime $p \le j_1+1$.  
So for three consecutive primes in arithmetic progression, the minimal constellation is $s=66$.  
For four consecutive primes in arithmetic progression, the minimal constellation is $s=666$, and
then for an arithmetic progression of five consecutive primes the minimal constellation is 
$s=30,30,30,30$.  

\subsubsection{Constellation $s=66$.}
The constellation $s=66$
corresponds to three consecutive primes in arithmetic progression: $p, \; p+6, \; p+12$.
Since $|s|=12$, we can still use $p_0 = 5$.  In $\pgap(\pml{5})=64242462$, we observe the
following initial conditions for $s=66$:
$$ n_{66}(\pml{5}) = \left[ \begin{array}{c}
 0 \\ 2 \\ 2 \end{array}\right].$$
For the first entry, we don't
yet have any occurrences of $s=66$.  We do have two driving terms of length three: $642$
and $246$; and two driving terms of length four: $4242$ and $2424$.

\begin{eqnarray*}
w_{66}(\pml{5}) & = & \frac{1}{\phi_3(\pml{5})}
 \left[ \begin{array}{c} 0 \\ 2 \\ 2 \end{array} \right] \\
{\rm and} & & \\
w_{66}^\infty & = & 2.
\end{eqnarray*}

\subsubsection{Constellation $s=12,12$.}
The constellation $s=12,12$ also
corresponds to three consecutive primes in arithmetic progression: $p, \; p+12, \; p+24$.
Since $|s|=24$, to apply the full dynamic system of Theorem~\ref{sThm} we use $p_0 = 11$.  
In $\pgap(\pml{11})$, we calculate the following, for driving terms from length $j_1=2$ to $J=6$:
\[ \begin{array}{lcl} n_{12,12}(\pml{11}) = \left[ \begin{array}{c}
0 \\ 2 \\ 20 \\ 48 \\ 58 \end{array}\right] &{\rm and} &
w_{12,12}(\pml{11}) =  \frac{1}{\phi_3(\pml{11})}
 \left[ \begin{array}{c} 0 \\ 2 \\ 20 \\ 48 \\ 58 \end{array} \right] 
 \end{array} 
 \]
 So for $s=12,12$ we have
\[
w_{12,12}^\infty = \frac{128}{8\cdot 4 \cdot 2} = 2.
\]
It is interesting that the asymptotic relative population of $s=12,12$ is the same as for the
constellation $s=66$.

\subsubsection{Constellation $s=666$.}
The constellation $s=666$ is the smallest constellation
corresponding to four consecutive primes in arithmetic progression.
Since $|s|=18$, we can use $p_0 = 7$.  In $\pgap(\pml{7})$, we have the
following initial conditions for $s=666$, for driving terms from length $j_1=3$ to $J=5$:
\[ \begin{array}{lcl} n_{666}(\pml{7}) = \left[ \begin{array}{c}
0 \\ 4 \\ 2 \end{array}\right] &{\rm and} &
w_{666}(\pml{7}) =  \frac{1}{\phi_4(\pml{7})}
 \left[ \begin{array}{c} 0 \\ 4 \\ 2  \end{array} \right] 
 \end{array} 
 \]
For $s=666$ we have
\[
w_{666}^\infty = \frac{6}{3} = 2.
\]
It is interesting that we again have the asymptotic relative population of $w_s^\infty=2$, 
although here it is relative to constellations of length $j_1=3$.

\subsection{Polignac result for arithmetic progressions}
Here we establish a strong Polignac result on arithmetic progressions.
In Theorem~\ref{PolThm} we established an analogue of the Polignac conjecture
for Eratosthenes sieve.
For a gap $g=2n$, we can now show that not only does the gap occur in Eratosthenes sieve,
but that every feasible repetition of $g$ as a constellation $s=g,\ldots , g$ occurs in
the sieve.  We make this precise below. 


Above, we calculated the occurrences for a few small examples.  We cannot
perform the brute force calculation for five primes in arithmetic progression.  The minimal
constellation we would need to consider is
$s=30,30,30,30$.  To apply Theorem~\ref{sThm} we would have to use $p_0=57$.  However,
we could obtain asymptotic results via Theorem~\ref{winfThm} with $p_0=5$.
The steps we would take in order to apply Theorem~\ref{winfThm} to this constellation
can be generalized to prove Theorem~\ref{PolsThm} below.

{\bf Definition.}  Let $g$ be a gap and let $p_k$ be the
largest prime such that $\pml{p_k} | g$.  Let $s= g,\ldots,g$ be a repetition of $g$ of length $j_1$.
Then the constellation $s$ is {\em feasible} iff $j_1 < p_{k+1}-1$.

If it survives subsequent stages of the sieve, a repetition $s$ of length $j_1$ corresponds to $j_1+1$ consecutive primes in arithmetic progression.

\begin{theorem}\label{PolsThm}
Let $g$ be an even number, 
and let $Q= q_1 \cdots q_m$ be the product of the distinct prime factors of $g$, with
$q_1 < \ldots < q_m$.  Let $s$ be a feasible repetition of $g$ of length $j_1$.
Then $s$ occurs in Eratosthenes sieve with asymptotic weight
\[ w_{g,\ldots,g}^\infty = \frac{\phi_1(Q)}{\phi_{j_1+1}(Q)}.
\]
\end{theorem}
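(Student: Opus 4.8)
The plan is to follow the template used for Theorem~\ref{PolThm}: descend to $\Z\bmod Q$ to count the driving terms of the constellation $s=g,g,\ldots,g$ of length $j_1$, re-enter Eratosthenes sieve by restoring the primes missing from $Q$, and read off the asymptotic weight from Theorem~\ref{winfThm}. The first step is to identify the prime $p_0$ of Theorem~\ref{winfThm} for this $s$. Its boundary intervals $g_i+\cdots+g_\ell$ (with $1\le i\le\ell\le j_1$) are exactly $g,2g,\ldots,j_1 g$. Feasibility says $j_1<p_{k+1}-1$, where $p_k$ is the largest prime with $\pml{p_k}\mid g$; hence every prime $\le j_1+1$ divides $g$, and therefore divides $Q$. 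Consequently every prime factor of $kg$ with $1\le k\le j_1$ is already a prime factor of $g$, so the highest prime dividing a boundary interval is $\bar q=q_m$. Thus Theorem~\ref{winfThm} applies with $p_0=\bar q$, and
\[ w_s^\infty \;=\; L_1\cdot w_s(\pml{\bar q}) \;=\; \frac{1}{\phi_{j_1+1}(\pml{\bar q})}\sum_j n_{s,j}(\pml{\bar q}), \]
so the whole problem reduces to evaluating $\sum_j n_{s,j}(\pml{\bar q})$.

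Next I would count the driving terms for $s$ in $\pgap(Q)$, just as in the proof of Lemma~\ref{QLem}. With $n_1=g/Q$ (an integer, since $Q$ is the radical of $g$), any driving term for $s$ in $\pgap(Q)$ splits into $j_1$ consecutive blocks each of sum $g$; a run of consecutive gaps in $\pgap(Q)$ of sum $g=n_1 Q$ is precisely $n_1$ complete cycles rooted at and returning to one generator, so the driving term is $j_1 n_1$ complete cycles rooted at a single generator of $\Z\bmod Q$. There are $\phi(Q)=\phi_1(Q)$ of these, one for each generator, and they become genuinely distinct once the missing primes are restored. I would then restore, one prime $p$ at a time (those $p\le\bar q$ with $p\nmid Q$) via Lemma~\ref{LemmaCycle}(b). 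The decisive point is that each such $p$ exceeds $j_1+1$: otherwise $p\mid\pml{p_k}\mid g$, forcing $p\mid Q$. Hence $p\nmid kg$ for every $1\le k\le j_1$; since the boundary generators of a driving term $\tilde s=\tilde s_1\cdots\tilde s_{j_1}$ differ pairwise by the multiples $kg$, the $j_1+1$ boundary closures of $\tilde s$ land in $j_1+1$ distinct copies of $\tilde s$ when $p$ is restored. By Theorem~\ref{ThmqN} (as in the proof of Corollary~\ref{qNCor}) each closure occurs exactly once, so precisely $j_1+1$ of the $p$ copies are removed from being driving terms for $s$ and the remaining $p-j_1-1$ survive as driving terms for $s$ of smaller, undetermined length -- the constellation analogue of Corollary~\ref{qNCor} and Figure~\ref{GenSystemFig}. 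So $\sum_j n_{s,j}$ is multiplied by $p-j_1-1$ at each step, yielding $\sum_j n_{s,j}(\pml{\bar q})=\phi_1(Q)\prod_{p\le\bar q,\;p\nmid Q}(p-j_1-1)$.

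The last step is the normalization bookkeeping. By definition $\phi_{j_1+1}(\pml{\bar q})=\prod_{q\le\bar q,\;q>j_1+1}(q-j_1-1)$; splitting this according to whether $q\mid Q$, and noting that every prime $q\le\bar q$ with $q\nmid Q$ automatically exceeds $j_1+1$, we get $\phi_{j_1+1}(\pml{\bar q})=\phi_{j_1+1}(Q)\cdot\prod_{q\le\bar q,\;q\nmid Q}(q-j_1-1)$. Dividing, the missing-prime factors cancel and
\[ w_s^\infty \;=\; \frac{\phi_1(Q)}{\phi_{j_1+1}(Q)}. \]
This is positive, and positivity together with the eigenstructure of the dynamic system (dominant eigenvalue $1$ with right eigenvector $e_1$) forces $w_{s,j_1}(\pml{p})\to w_s^\infty>0$, so $s$ itself appears in $\pgap(\pml{p})$ for all large $p$; to pin down occurrence directly one notes that whenever the shortest driving term for $s$ still has length above $j_1$, Lemma~\ref{DelLemma} guarantees an interior closure at the next stage, so the minimum driving-term length strictly decreases until $s$ appears.

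I expect the main obstacle to be the second step: establishing, without the hypothesis $|s|<2p_1$, the constellation version of the count in Lemma~\ref{Lemma2p} and Corollary~\ref{qNCor} -- that when a prime $p>j_1+1$ is restored, the $j_1+1$ boundary closures of every driving term fall in distinct copies and exactly $p-j_1-1$ copies survive, even though the lengths of the survivors can no longer be controlled. The descent--ascent mechanics through $\Z\bmod Q$ mirror Lemma~\ref{QLem}, and the normalization is routine, so the $(p-j_1-1)$-multiplicativity of $\sum_j n_{s,j}$ under the general recursion is where the real content lies.
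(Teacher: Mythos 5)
Your proposal is correct and follows essentially the same route as the paper's proof: seed the count with the $\phi(Q)$ driving terms in $\pgap(Q)$ (one per generator, each consisting of $j_1 g/Q$ complete cycles), restore the missing primes $\rho \le q_m$ one at a time, use feasibility to show $\rho > j_1+1$ and $\rho \nmid kg$ so that the $j_1+1$ boundary closures fall in distinct copies and the total driving-term population is multiplied by $\rho - j_1 - 1$, then cancel these factors against $\phi_{j_1+1}(\pml{q_m})/\phi_{j_1+1}(Q)$. Your explicit normalization bookkeeping and the closing remark on why the constellation itself (not merely its driving terms) eventually appears are slightly more detailed than the paper's treatment, but the argument is the same.
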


\begin{proof}
We start in the cycle of gaps $\pgap(Q)$, in which $Q$ may not be a primorial. 
$\pgap(Q)$ consists of $\phi(Q)$ gaps that sum to $Q$.  So we can start with
any of the $\phi(Q)$ gaps and continue through the cycle $g/Q$ times, and
this concatenation is a driving term for $g$.  

We repeat this concatenation of cycles $\pgap(Q)$, to identify the driving terms
for $s=g,\ldots,g$, a feasible repetition of the gap $g$ of length $j_1$. 
Starting with any gap in $\pgap(Q)$, we continue through
$\pgap(Q)$ for $j_1 \cdot g / Q$ complete cycles, and this is
a driving term $\tilde{s}$ for $s$.
Since we can start from any gap in $\pgap(Q)$, we have $\phi(Q)$ driving terms for $s$.

Now that we have seeded the construction, we move from
$\pgap(Q)$ back into the cycles of gaps for the primorials, $\pgap(\pml{q_m})$.   

{\em Case 1. $Q$ is itself a primorial.}  In this case $Q=\pml{q_m}$, and the total number of driving 
terms for $s$ equals $L_1 \cdot n_s(\pml{q_m}) = \phi(Q)$.  From this, we calculate
the asymptotic ratio
\[ w_s^\infty = \frac{\phi(\pml{q_m})}{\phi_{j_1+1}(\pml{q_m})} = \frac{\phi_1(Q)}{\phi_{j_1+1}(Q)}.
\]

{\em Case 2. $Q$ is not a primorial.}  Let $\rho_1 < \rho_2 < \cdots < \rho_k$ be the
primes less than $q_m$ that are not factors of $Q$.  Let $Q_0=Q$.  For $i=1,\ldots,k$,
let $Q_i = Q_{i-1} \cdot \rho_i.$  

Let $\tilde{s}$ be a driving term for $s$ in $\pgap(Q_{i-1})$.  Under the recursion to create
$\pgap(Q_i)$, we create $\rho_i$ copies of $\tilde{s}$ in step R2.  Then in step R3 we close
gaps as indicated by the element wise product $\rho_i * \pgap(Q_{i-1})$.  

The driving term $\tilde{s}$ is composed of $j_1$ driving terms for $g$.
\[ \tilde{s} = \tilde{s_1} \; \tilde{s_2} \; \ldots \; \tilde{s_{j_1}}
\]
in which each $|\tilde{s_i}| = g$.  

The $j_1+1$ boundary closures will eliminate copies of $\tilde{s}$
from being a driving term for $s$.
All of the intervals in Theorem~\ref{winfThm} are multiples of
$g$, and these multiples themselves have prime factors entirely divisible by the factors of $g$.
That is, if $\pml{p}$ is the largest primorial that divides $g$, then for all $2 \le j \le j_1$, 
by the feasibility of $s$, the prime factors of $j$ are factors of $\pml{p}$.

Since $\rho_i \not| g$, all of the boundary closures for $\tilde{s}$ occur in different copies
of $\tilde{s}$.  Thus, of the $\rho_i$ initial copies of $\tilde{s}$, $j_1+1$ are removed as
driving terms for $s$, and the other $\rho_i-j_1-1$ copies remain as driving terms of
some length but no longer than the length of $\tilde{s}$.  

We don't know the lengths of the driving terms for $s$ in $\pgap(Q_i)$, but we do know the
total population:
\begin{eqnarray*}
L_1 \cdot n_s(Q_i) & = & (\rho_i - j_1 - 1) \cdot L_1 \cdot n_s(Q_{i-1}) \\
 & = & (\rho_i-j_1-1)(\rho_{i-1}-j_1-1)\cdots(\rho_1-j_1-1) \cdot L_1 \cdot n_s(Q_0) \\
 & = & (\rho_i-j_1-1)(\rho_{i-1}-j_1-1)\cdots(\rho_1-j_1-1) \cdot \phi(Q).
\end{eqnarray*}
Continuing this construction, we have $Q_k=\pml{q_m}$, from which
\[ L_1 \cdot n_s(Q_k) = (\rho_k-j_1-1)(\rho_{k-1}-j_1-1)\cdots(\rho_1-j_1-1) \cdot \phi(Q),
\]
and the asymptotic ratio is
\begin{equation*}
 w_s^\infty = \frac{1}{\phi_{j_1+1}(\pml{q_m})} \cdot L_1 \cdot n_s(\pml{q_m}) \; = \;
 \frac{\phi(Q)}{\phi_{j_1+1}(Q)}.
\end{equation*}
\end{proof}

\section{Surviving the sieve:  Gaps between primes}
In our work above, we obtain several exact and asymptotic results regarding the gaps and constellations
in the cycles of gaps $\pgap(\pml{p})$.   How do we translate these results into conclusions about
the gaps and constellations that occur between prime numbers?  As Eratosthenes sieve continues,
additional closures occur.  Each additional closure eliminates two gaps to produce a new gap (their sum).

We have some evidence that the recursion is a fair process.  There is an approximate uniformity
to the replication.  Each instance of a gap in $\pgap(\pml{p_k})$ is replicated $p_{k+1}$ times uniformly
spaced in step R2, and then two of these copies are removed through closures.  Also,
the parameters for the dynamic system are independent of the size of the gap; each constellation 
of length $j$ is treated the same, with the threshold condition $g < 2p_{k+1}$.  If the recursion
is a fair process, then do we expect the survival of gaps to be fair as well?

If we had a better characterization of the survival of the gaps in $\pgap(\pml{p})$, or of the 
distribution of subsequent closures across this cycle of gaps, we would be able to make 
stronger statements about what these exact results on the gaps in Eratosthenes sieve imply
about the gaps between primes.  

We have investigated three different approaches to modeling the survival of gaps in $\pgap(\pml{p})$:
\begin{itemize}
\item {\em Naive models.}  In this approach, we combine simple observations about $\pgap(\pml{p})$ to
make estimates about survival.  For example, the distribution of copies of a gap $g$ is approximately uniform
through the cycle; the cycle is symmetric; and
all of the gaps in the interval $[p_{k+1}, \; p_{k+1}^2]$ survive as gaps between primes.
\item {\em Attrition model.}  In this approach, we fix a cycle of gaps $\pgap(\pml{p_k})$ and consider the
action of the subsequent closures in this cycle as the sieve continues
for $q = p_{k+1}, p_{k+2}, \ldots, P$.  Here $P$
is the largest prime such that $P^2 < \pml{p_k}$.
\item {\em Extremal models.}  For a gap or constellation whose populations grow rapidly enough to have growing expected
values for survival, what is the probability that {\em none} will survive after some stage of the sieve?  What implications
does this have on the distribution of gaps with $\pgap(\pml{P})$ for large $P$?
\end{itemize}

We discuss these approaches briefly here.  We do not yet have any definitive results regarding survival,
but there are promising leads for further work.  There are also aspects of the recursion which we do not feel
have been completely exploited yet -- for example, the fractal character of the recursion, or the symmetry
of a cycle of gaps and the constellation of powers of $2$ at its center.

\subsection{Uniformity and naive models.}

The simplest estimates we can make regarding survivability of gaps in the sieve to become gaps between primes,
is to assume that the copies of a gap are approximately uniformly distributed throughout $\pgap(\pml{p_k})$, and
to observe that all of the gaps between $p_{k+1}$ and $p_{k+1}^2$ are in fact gaps between primes.  So our
naive estimate $E_g[p_{k+1},p_{k+1}^2]$ of the number of gaps $g$ that occur between primes in the interval
$[p_{k+1},p_{k+1}^2]$ is:
\[ E_g[p_{k+1},p_{k+1}^2] = \frac{p_{k+1}^2 - p_{k+1}}{\pml{p_k}} \cdot n_{g,1}(\pml{p_k}).
\]
The same naive estimate can be applied to constellations as well.

The assumption of uniformity is a reasonable approximation.  For any one copy of a gap $g$ in $\pgap(\pml{p_{k-1}})$,
the step R2 of the recursion will create $p_k$ images of this copy of $g$, and these images {\em are} uniformly distributed.
Then in step R3, two of these $p_k$ images will be eliminated through closures; and over all the images of copies of $g$,
these closures have to respect the symmetry of $\pgap(\pml{p_k})$.

Figure~\ref{ErrorFig} displays the percentage error of the naive estimate for a few representative gaps and constellations,
through $p_{k+1}^2 \approx 10^{12}$.  The plots display some initial transient noise, followed by systematic errors in the estimates.  It is interesting that for the constellations we have studied, the error in the naive estimate seems to depend 
primarily on the length $j$ of the constellation.

\begin{figure}[t]
\centering
\includegraphics[width=5in]{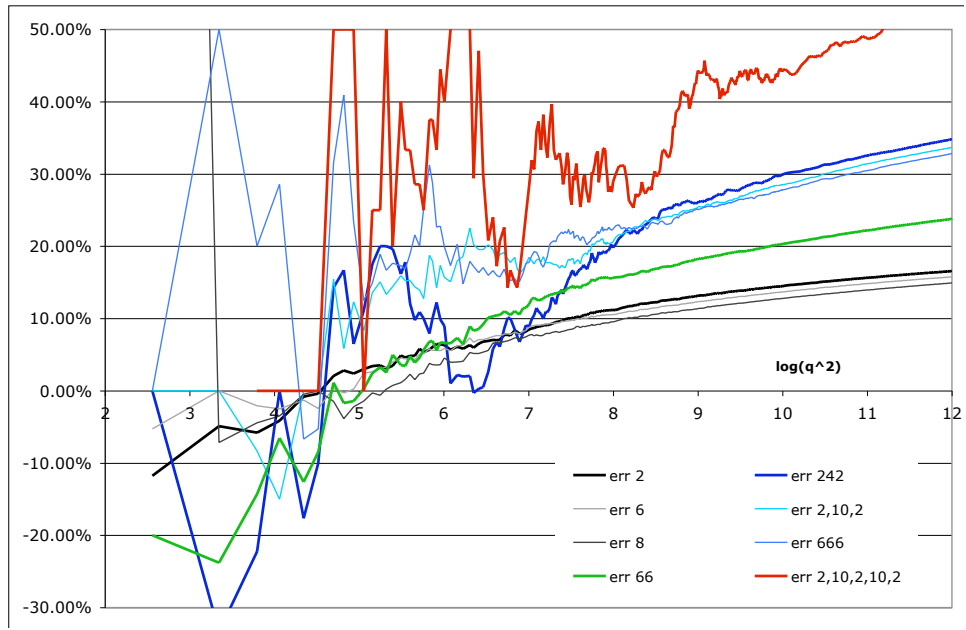}
\caption{\label{ErrorFig} A plot of the errors in the naive estimates for a few representative gaps
and constellations.  After some initial noise, the errors in the estimates appear to settle down into families
of curves which exhibit a systematic error dependent primarily on the length of the
constellation. }
\end{figure}

While the assumption of uniformity provides a decent first-order estimate of the populations of gaps and of 
short constellations between primes, we can readily identify problems with this assumption as well.
Although the population of a gap is growing superexponentially, by factors of $(p-2)$, the density of any gap $g$
in $\pgap(\pml{p})$ goes to $0$ as $p \fto \infty$; and the sample interval $[p_{k+1}, p_{k+1}^2]$ also becomes 
vanishingly small.  

We also observe that constellations persist in the sieve.  These constellations mean that there is an inherent
variation away from uniformity.  For example, a uniformly distributed gap would have average distance (say from
left side of one copy of the gap to the left side of the next copy of the gap) of
$\pml{p}/n_{g,1}(\pml{p})$.  This average distance grows arbitrarily large.
However, for $g=2$, the constellation $242$ keeps pairs of the gap as distance $6$.
The constellation $2,10,2,10,2$ keeps three copies of the gap $g=2$ at distance $12$.  Similiarly, our work
in Theorem~\ref{PolsThm} demonstrates that long repetitions of gaps occur.
This clustering of copies of a gap in short constellations means that other copies of that gap must
occur with distances much greater than the average.

One further observation about the naive estimate is that the samples are not independent.  
We appeal to steps R2 and R3 in the recursion to support a uniform approximation.  However, 
the interval $[p_{k+1}, p_{k+1}^2]$ for sampling $\pgap(\pml{p_k})$ and the next interval $[p_{k+2}, p_{k+2}^2]$
for sampling $\pgap(\pml{p_{k+1}})$ substantially overlap.  Relatively few additional closures occur between
$p_{k+1}^2$ and $p_{k+2}^2$.

\subsection{Attrition in a cycle as the sieve continues.}
Given the limitations of the naive estimate, we take a different approach toward the survival of gaps.
Let's fix a cycle of gaps $\pgap(\pml{p_k})$ and track the survival of these gaps as the sieve continues.

In $\pgap(\pml{p_k})$ the gaps at the front of the cycle, up through $p_{k+1}^2$, all survive as gaps between
primes.  Then, after closing
at $p_{k+1}^2$, the next set of gaps survive up until the closure at $p_{k+1}\cdot p_{k+2}$.
Further on in the cycle $\pgap(\pml{p_k})$ there are closures due to sieving by $p_{k+1}, p_{k+2}, \ldots, P$
where $P$ is the largest prime such that $P^2 < \pml{p_k}$.

Let's look at $\pgap(\pml{7})$ as an example. This cycle of gaps has length $48$, and the
gaps sum to $210$.
$$
\pgap(\pml{7}) =
 {\scriptstyle 
 10, 2424626424 6 62642 6 46 8 42424 8 64 6 24626 6 4246264242, 10, 2}
 $$
The first gap $10$ marks the next prime, $p_{k+1}=11$.  This first gap is the accumulation of gaps
between the primes from $1$ to $p_{k+1}$.  The next several gaps will actually survive to be confirmed
as gaps between primes, since the smallest remaining closure will occur at $p_{k+1}^2 = 121$.  
The largest prime $P$ for which we have to consider additional closures is $13$; for the next prime,
$17^2 > \pml{7} = 210$.

\begin{figure}[t]
\centering
\includegraphics[width=5in]{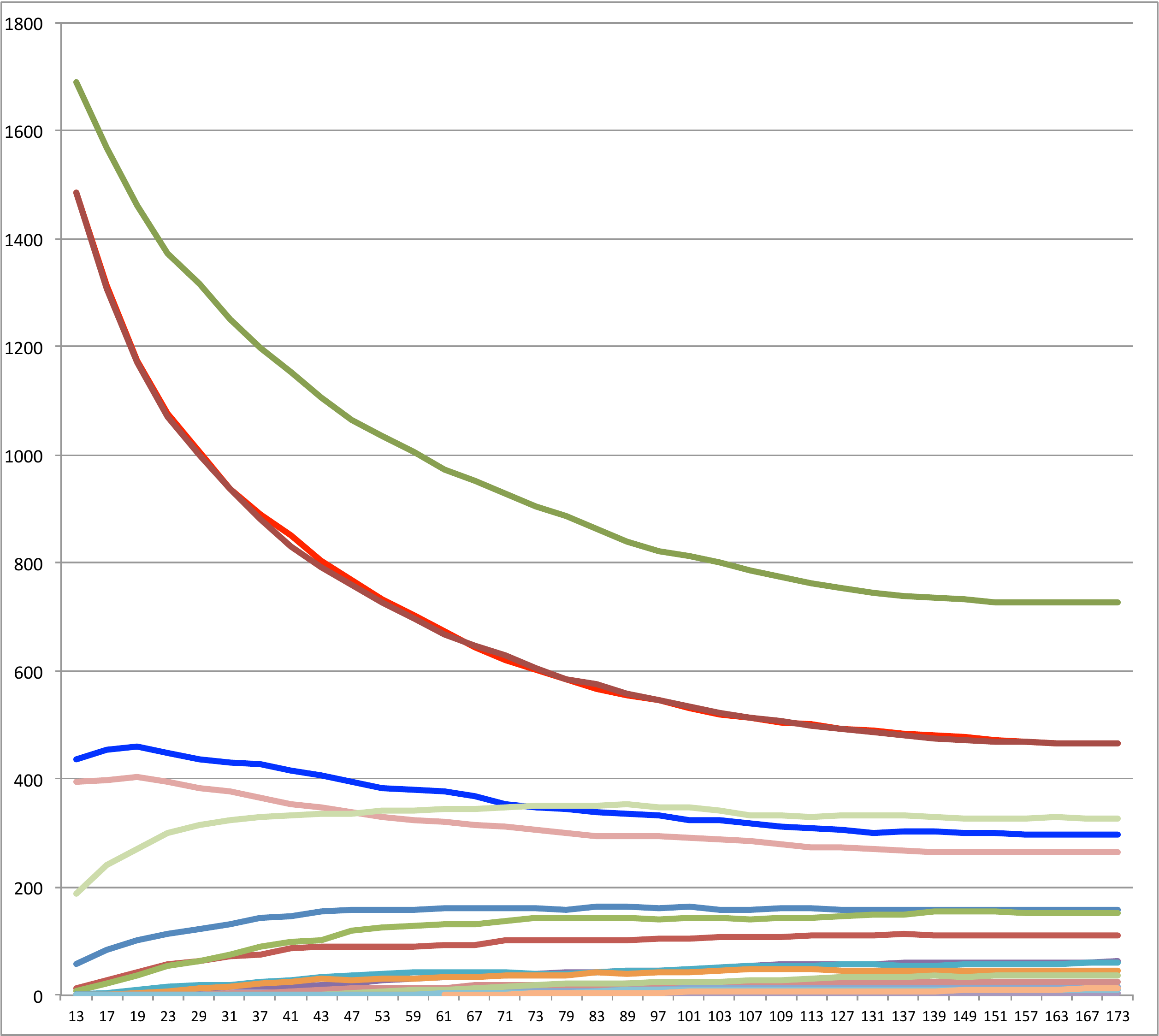}
\includegraphics[width=5in]{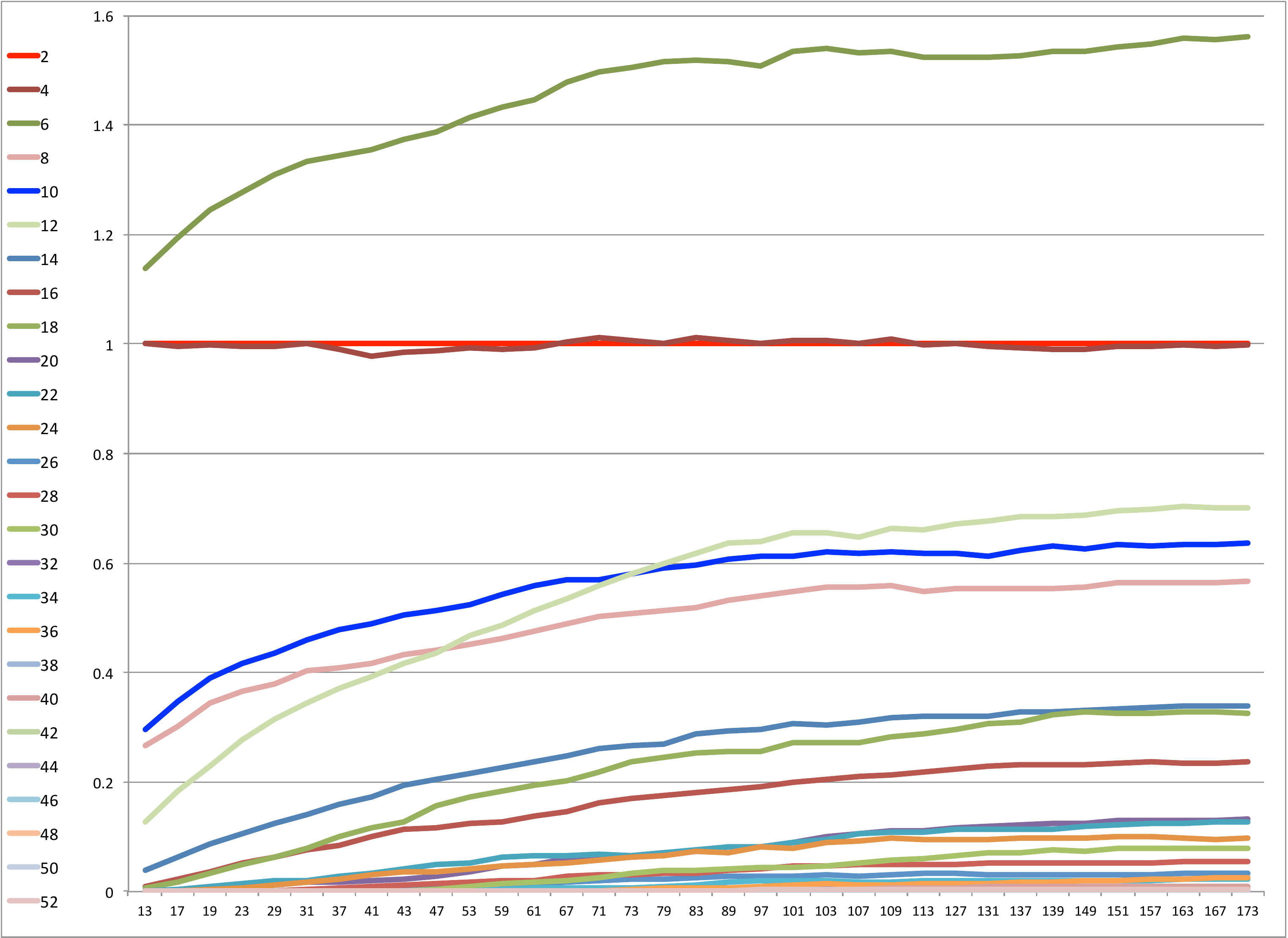}
\caption{\label{AttritionFig} A plot of the effect of attrition on the gaps in $\pgap(\pml{13})$ for closure from
$p=17,\ldots, 173$. The populations of all the gaps that arise through this process are depicted in the top graph.
The lower graph depicts the normalized populations of the gaps through this process.}
\end{figure}

Here are the closures
that occur in $\pgap(\pml{7})$ as the sieve continues.  The gaps that are known to survive at each
stage are marked in bold.
$$\begin{array}{rl}
\pgap(\pml{7}) \;  = &
 {\scriptstyle 
 10, 2424626424 6 62642 6 46 8 42424 8 64 6 24626 6 4246264242, 10, 2} \\
(p=11)  \Rightarrow & {\scriptstyle 
 10, +
 \overbrace{\scriptstyle {\bf 2424626424 6 62642 6 46 8 42424} \; 8}^{110} + 
 \overbrace{\scriptstyle 6 \; {\bf 4 6 2} \; 4}^{22} +
 \overbrace{\scriptstyle 6 \; 26 6 42462 \; 6}^{44} +
 \overbrace{\scriptstyle 4 \; 242, \; 10,}^{22} + 2  \ldots} \\
(p= 13)  \Rightarrow &  {\scriptstyle 12, +
 \overbrace{\scriptstyle {\bf 424626424 6 62642 6 46 8 42424, \; 14, \;  462, \; 10, \; 26 6 4} \; 2}^{156} + 
 4 {\bf 62, \; 10,  \; 242, \; 12,} \ldots}  \\
 (p = 17)  \Rightarrow &  {\scriptstyle 
 16, \; {\bf 24626424 6 62642 6 46 8 42424, \; 14, \;  462, \; 10, \; 26 6 4 \; 6\; 62, \; 10,  \; 242, \; 12,} \ldots }  
 \end{array}
 $$
From the prime $p=17$ and up, there are no more closures for this sequence of gaps.  All of the
remaining gaps survive as gaps between primes.
So the survival process for $\pgap(\pml{7})$ can be visually summarized as:
$$\begin{array}{rlcccccc}
\pgap(\pml{7}) \;  = &
 {\scriptstyle 
 {\bf 10, 24246264}} &  {\scriptstyle 24 {\bf 6 62642 6 46 8 42424}} &  {\scriptstyle 86 {\bf 462}} &  
 {\scriptstyle  46 {\bf 26 6 4}} &  {\scriptstyle  24 {\bf 62}} &  {\scriptstyle  64 {\bf 242},} &  {\scriptstyle  10, 2} \\
 \Rightarrow &  {\scriptstyle 
{\bf  10,24626424}} &  {\scriptstyle  6 {\bf 62642 6 46 8 42424},} &  {\scriptstyle 14, {\bf 462},} &  
{\scriptstyle 10, {\bf 26 6 4}} &  
{\scriptstyle  6 {\bf 62},} &  {\scriptstyle  10, {\bf 242},} &  {\scriptstyle  12,}
\end{array}
$$

The attrition for $\pgap(\pml{13})$ is illustrated in Figure~\ref{AttritionFig}.  The top figure plots the populations
of the gaps in the cycle, through the closures due to $p=17, 19, \ldots,173$.  Initially the largest gap in $\pgap(\pml{13})$
is $g=22$; the gap $g=52$ is first created in closures by $p=73$ and this continues to be the largest gap through
the rest of this process.  The original cycle $\pgap(\pml{13})$ has $5760$ gaps, and after subsequent closures we
finish with $3245$ gaps.  The lower figure plots the populations at each stage relative to the number of gaps $g=2$.

\subsection{Extremal models.}
Based on our computational experiments, illustrated in Figure~\ref{ErrorFig}, 
the naive model seems to track the populations of gaps and short 
constellations to first order.  This naive model exhibits systematic errors, but for the limited experiments we
have conducted so far, the errors exhibit less than logarithmic growth in $p_{k+1}^2$.  So we pose the extreme
question:  if there exists a prime $P$, such that a given gap $g$ no longer occurred between primes for $p > P$,
what implications does this have for the distribution of gaps in $\pgap(\pml{p})$?  And how is this sustained
through further recursions?

Since the cycle of gaps $\pgap(\pml{p_k})$ is symmetric, the copies of $g$ must get pushed away from the ends,
at least as fast as $p_{k+1}^2$ advances.

Under the naive model, the expected number of short constellations in the sample interval $[p_{k+1},p_{k+1}^2]$ 
eventually grows as the sieve proceeds.  Ignoring driving terms, the population of a constellation of length $j$ 
grows by factors of $(p_k-j-1)$; and although the length of the cycle of gaps is $\pml{p_k}$, the
sample interval grows as $p_{k+1}^2$.
\begin{eqnarray*} E_g[p_{k+1},p_{k+1}^2] & = & \frac{p_{k+1}^2 - p_{k+1}}{\pml{p_k}} \cdot n_{g,1}(\pml{p_k}) \\
 & = & \frac{p_k^2-p_k}{p_k^2-p_k} \frac{p_{k+1}^2 - p_{k+1}}{p_k \cdot \pml{p_{k-1}}} \cdot (p_k-j-1) n_{g,1}(\pml{p_{k-1}}) \\
  & = & \frac{(p_{k+1}^2-p_{k+1})(p_k-j-1)}{(p_k^2-p_k)p_k} \frac{p_k^2 - p_k}{\pml{p_{k-1}}} \cdot  n_{g,1}(\pml{p_{k-1}}) \\
  & = & \frac{(p_{k+1}^2-p_{k+1})(p_k-j-1)}{(p_k^2-p_k)p_k} E_g[p_k,p_k^2]
\end{eqnarray*}

From this relation, we can see that for large primes $p$ the expected populations for $j=1,2$ always grow, 
even when $p_k$ and $p_{k+1}$ are twin primes.  For other constellations, these expected values grow for
large primes whenever $p_{k+1}-p_k > (j+1)/2.$  One intuitive interpretation of this is that although
the density of a gap shrinks in $\pgap(\pml{p})$, the sample interval grows faster than the density shrinks.

From Theorem~\ref{PolThm} we know that every gap $g$ arises in the sieve, and its population approaches 
a known ratio to the gaps $2$, depending only on the prime factors of $g$.
So if the attrition process does have a strong bias against the gap $g$, eventually eliminating all copies of $g$
from the intervals $[p_{k+1},p_{k+1}^2]$, this bias has to be enforced by step R3 of the recursion.  

If twin primes are eliminated after some large prime $P$, this has interesting effects on the recursion.  Suppose
no gaps $g=2$ occur in the interval $[p_{k+1},p_{k+1}^2]$.  Then for the next several stages of the sieve, in step R3 the elementwise products that mark the differences between successive closures are at least $4p$.  And without $2$'s,
the smallest gap adjacent to a $4$ is a $6$.  So the closures will be relatively sparse at the front of the cycle of
gaps.

Extremal models are those models that propose a first-order deviation away from the expected values provided
by the naive model, and especially those models that propose that step R3 in the recursion is biased for or against
certain specific gaps.  The extremal models themselves affect the elementwise products in step R3, so there is
some compatibility condition implied -- that the bias produced in survival can also be sustained through ongoing
recursions.


\section{Conclusion}
By identifying structure among the gaps in each stage of Eratosthenes sieve, we 
have been able to develop an exact model for the populations of gaps and their
driving terms across stages of the sieve.  
We have developed a model for a discrete dynamic system that takes 
the initial populations of a gap $g$ and all its driving terms in a cycle of gaps 
$\pgap(\pml{p_0})$ such that $g < 2p_1$, and thereafter provides the exact
populations of this gap and its driving terms through all subsequent cycles of gaps.

All of the gaps between primes are generated out of these cycles of gaps, with the gaps at the front of 
the cycle surviving subsequent closures.  

The coefficients of this model
do not depend on the specific gap, only on the prime for each stage of the sieve.
To this extent, the the sieve is agnostic to the size of the gaps.

On the other hand, the initial conditions for the model do depend on the size
of the gap.  More precisely, the initial conditions depend on the prime factorization of
the gap.

For several conjectures about the gaps between primes, we can offer 
precise results for their analogues in the cycles of gaps across stages of Eratosthenes
sieve.  Foremost among these analogues, perhaps, is that we are able to 
affirm in Theorem~\ref{PolThm} an analogue of Polignac's conjecture 
that also supports Hardy \& Littlewood's
Conjecture B:

{\em For any even number $2n$, the gap $g=2n$ arises in Eratosthenes sieve, and
as $p \fto \infty$, the number of occurrences of the gap $g=2n$ to the gap $2$
approaches the ratio
$$w_{2n,1}(\infty) =  \prod_{q>2, \; q|n} \frac{q-1}{q-2}.
$$}
These results
provide evidence toward the original conjectures, to the extent that gaps
in stages of Eratosthenes sieve are indicative of gaps among primes themselves.

We extend the model for gaps to apply to the populations of constellations of any length.
This extension culminates in Theorem~\ref{PolsThm}, in which we show that every feasible
repetition of a gap $g$ arises in Eratosthenes sieve, and that its population approaches the
same ratio given above.  These repetitions correspond to consecutive primes in arithmetic progression.

All of the work on the populations of gaps and constellations in Eratosthenes sieve is constructive and
deterministic.  After setting up the models, exploring some examples, and stating some general results,
we then turn to considering a few preliminary models for how gaps and constellations in the sieve 
might survive subsequent stages of the sieve, to be confirmed as gaps and constellations among primes.
This work on surviving the sieve is preliminary and includes probabilistic modeling.



\bibliographystyle{amsplain}


\begin{thebibliography}{10}

\bibitem{Brent}
R.~Brent, \emph{The distribution of small gaps between successive prime
  numbers}, Math. Comp. \textbf{28} (1974), 315--324.

\bibitem{Brent3}
\bysame, \emph{Irregularities in the distribution of primes and twin primes},
  Math. Comp. \textbf{29} (1975), 42--56.

\bibitem{quads}
P.A. Clement, \emph{Congruences for sets of primes}, AMM \textbf{56} (1949),
  23--25.

\bibitem{Cramer}
H.~Cram\'er, \emph{On the order of magnitude of the difference between
  consecutive prime numbers}, Acta Math. \textbf{2} (1937), 23--46.

\bibitem{E79}
P.~Erd\"os, \emph{Some unconventional problems in number theory}, 
Mathematics Magazine \textbf{52} (March 1979), 67--70.

\bibitem{ET}
P.~Erd\"os and P.~Tur\'an, \emph{On some new questions on the distribution of
  prime numbers}, BAMS \textbf{54} (1948), 371--378.
  
\bibitem{GPY}
D.~Goldston, J.~Pintz, and C.~Yildirim, \emph{Primes in tuples {I}},
 arXiv:0508185 (2005). 
 
\bibitem{GranICM}
A.~Granville, \emph{Unexpected irregularities in the distribution of prime
  numbers}, Proc. ICM'94 Zurich, vol.~1, Birkhauser, 1995, pp.~388--399.
  
\bibitem{GranPatterns}
\bysame, \emph{Prime number patterns}, MAA Monthly
 \textbf{115}, (2008), 279--296.
  
\bibitem{GranRaces}
A.~Granville and G.~Martin, \emph{Prime number races},   MAA Monthly
 \textbf{113}, (2006), 1--33.

\bibitem{HL}
G.H. Hardy and J.E. Littlewood, \emph{Some problems in 'partitio numerorum'
  iii: On the expression of a number as a sum of primes}, G.H. Hardy Collected
  Papers, vol.~1, Clarendon Press, 1966, pp.~561--630.

\bibitem{HW}
G.H. Hardy and E.M. Wright, \emph{An introduction to the theory of numbers},
  Clarendon Press, 1938.

\bibitem{IJ}
K.H. Indlekofer and A.~Jani, \emph{Largest known twin primes and {S}ophie
  {G}ermain primes}, Math. Comp. \textbf{68} (1999), no.~227, 1317--1324.

\bibitem{JLB}
M.F. Jones, M.~Lal, and W.J. Blurdon, \emph{Statistics on certain large
  primes}, Math. Comp. \textbf{21} (1963), 103--107.

\bibitem{NicelyTwins}
T.R. Nicely, \emph{Enumeration to $10^{14}$ of the twin primes and {B}run's
  constant}, Virginia J. Science \textbf{46} (1995), no.~3, 195--204.

\bibitem{NicelyQuads}
\bysame, \emph{Enumeration to $1.6*10^{15}$ of the prime quadruplets}, 
unpublished,  www.trnicely.net, (1999).

\bibitem{PSZ}
B.K. Parady, J.F. Smith, and S.E. Zarantonello, \emph{Largest known twin
  primes}, Math. Comp. \textbf{55} (1990), 381--382.

\bibitem{Rib}
P.~Ribenboim, \emph{The new book of prime number records}, Springer, 1996.

\bibitem{Riesel}
H.~Riesel, \emph{Prime numbers and computer methods for factorization}, 2 ed.,
  Birkhauser, 1994.
  
\bibitem{Sound}
K.~Soundararajan, \emph{Small gaps between prime numbers:  the work of 
 {G}oldston-{P}intz-{Y}ildirim}, Bull. AMS \textbf{44} (2007), 1--18.


\end{thebibliography}
\providecommand{\bysame}{\leavevmode\hbox to3em{\hrulefill}\thinspace}
\providecommand{\MR}{\relax\ifhmode\unskip\space\fi MR }
\providecommand{\MRhref}[2]{%
  \href{http://www.ams.org/mathscinet-getitem?mr=#1}{#2}
}
\providecommand{\href}[2]{#2}

\end{document}